\documentclass[10pt]{article}
\usepackage{verbatim}
\usepackage[multiple]{footmisc}
\usepackage{amsfonts,color}
\usepackage{longtable}
\usepackage{makecell, multirow, tabularx}
\usepackage{amsmath,amssymb}
\usepackage{adjustbox}
\usepackage{epsfig}
\usepackage{lscape}
\usepackage{amssymb}
\usepackage{graphicx}
\usepackage[utf8]{inputenc}	
\usepackage{amsthm,amssymb}
\usepackage{aligned-overset}
\usepackage{amsfonts}
\usepackage[english]{babel}
\usepackage{dsfont}
\usepackage{bbm}
\usepackage[T1]{fontenc}
\usepackage{colonequals}
\usepackage{geometry}
\usepackage{mathtools}
\usepackage{csquotes}
\usepackage{url}
\setcounter{tocdepth}{2}
\usepackage{lipsum}
\usepackage{array}
\usepackage{tablefootnote}
\usepackage{wrapfig}
\usepackage[utf8]{inputenc}
\usepackage{cancel}
\usepackage[dvipsnames]{xcolor}
\usepackage{accents}
\usepackage{marvosym}
\usepackage{centernot}
\usepackage{tikz}
\usepackage{nicematrix}
\usepackage[new]{old-arrows}
\usepackage{titling}
\usepackage{multicol}

\setlength{\textheight}{23cm}
\setlength{\textwidth}{17cm}
\setlength{\topmargin}{-1cm}
\setlength{\oddsidemargin}{-0cm}
\setlength{\evensidemargin}{-1cm}
\usepackage{amsthm}
\newtheoremstyle{mystyle}
{}
{}
{}
{}
{\bfseries}
{.}
{ }
{}
\theoremstyle{plain}
\theoremstyle{mystyle}
\newtheorem{thm}{Theorem}[section]
\theoremstyle{plain}
\theoremstyle{mystyle}
\newtheorem{prop}[thm]{Proposition}
\theoremstyle{plain}
\theoremstyle{mystyle}
\newtheorem{cor}[thm]{Corollary}
\theoremstyle{plain}
\theoremstyle{mystyle}
\newtheorem{lem}[thm]{Lemma}
\theoremstyle{plain}
\theoremstyle{mystyle}
\newtheorem{conjecture}[thm]{Conjecture}
\theoremstyle{plain}
\theoremstyle{mystyle}
\newtheorem{example}[thm]{Example}
\theoremstyle{plain}
\theoremstyle{mystyle}
\newtheorem{rem}[thm]{Remark}

\theoremstyle{plain}
\theoremstyle{mystyle}
\newtheorem{definition}[thm]{Definition}

\newcommand{\red}[1]{{\color{red}#1}}

\newcommand{\violet}[1]{{\color{violet}#1}}
\newcommand{\blue}[1]{{\color{blue}#1}}

\newcommand{\teal}[1]{{\color{teal}#1}}
\newcommand{\id}{{\boldsymbol{\mathbbm{1}}}}

\allowdisplaybreaks[1]
\makeindex
\newcommand{\mcZ}{\mathcal{Z}}
\newcommand{\R}{\mathbb{R}}

\newcommand{\norm}[1]{\lVert #1 \rVert}

\newcommand{\SO}{{\rm SO}}
\newcommand{\OO}{{\rm O}}
\newcommand{\DD}{\mathrm{D}}
\newcommand{\WW}{\mathrm{W}}

\newcommand*\dif{\mathop{}\!\mathrm{d}}

\newlength{\dhatheight}

\DeclareMathOperator{\diag}{diag}
\DeclareMathOperator{\Sym}{Sym}
\DeclareMathOperator{\Cof}{Cof}
\DeclareMathOperator{\dev}{dev}

\DeclareMathOperator{\sym}{sym}
\DeclareMathOperator{\bfsym}{\textbf{sym}}
\DeclareMathOperator{\tr}{tr}
\DeclareMathOperator{\iso}{iso}
\DeclareMathOperator{\Aif}{Aif}
\DeclareMathOperator{\Old}{Old}
\DeclareMathOperator{\CR}{CR}
\DeclareMathOperator{\ZJ}{ZJ}
\DeclareMathOperator{\GN}{GN}
\DeclareMathOperator{\GS}{GS}

\DeclareMathOperator{\TR}{TR}

\def\barr{\begin{array}}
\def\tr{\textnormal{tr}}

\def\sk{\textnormal{skew}}

\def\dd{\displaystyle}

\def\barr{\begin{array}}
\def\earr{\end{array}}
\def\becn{\begin{equation*}}
\def\endec{\end{equation}}
\def\endecn{\end{equation*}}
\def\C{\mathbb{C}}
\def\H{\mathbb{H}}

\makeatletter
\let\@fnsymbol\@arabic
\makeatother
\setcounter{MaxMatrixCols}{10}
\numberwithin{equation}{section}

\title{Hypo-elasticity and logarithmic strain}
\begin{document}
\title{A natural requirement for objective corotational rates - on structure preserving corotational rates}
\author{
Patrizio Neff\thanks{
Patrizio Neff, University of Duisburg-Essen, Head of Chair for Nonlinear Analysis and Modelling, Faculty of Mathematics, Thea-Leymann-Stra{\ss}e 9,
D-45127 Essen, Germany, email: patrizio.neff@uni-due.de
}, \qquad
Sebastian Holthausen\thanks{
Sebastian Holthausen, University of Duisburg-Essen, Chair for Nonlinear Analysis and Modelling,  Faculty of Mathematics, Thea-Leymann-Stra{\ss}e 9,
D-45127 Essen, Germany, email: sebastian.holthausen@uni-due.de
}, \qquad
Sergey N.~Korobeynikov\thanks{
Sergey N.~Korobeynikov, Principal Investigator of Composite Mechanics Laboratory of Lavrentyev Institute of Hydrodynamics, Lavrentyev Prospekt 15, Novosibirsk, 630090, Russia, email: S.N.Korobeynikov@mail.ru
}, \\[0.8em]
Ionel-Dumitrel Ghiba\thanks{
Ionel-Dumitrel Ghiba, Alexandru Ioan Cuza University of Ia\c si, Department of Mathematics,  Blvd. Carol I, no. 11, 700506 Ia\c si, Romania;  Octav Mayer Institute of Mathematics
of the Romanian Academy, Ia\c si Branch, 700505 Ia\c si, email: dumitrel.ghiba@uaic.ro
}, \quad and \quad
Robert J. Martin\thanks{
Robert J. Martin,  Lehrstuhl für Nichtlineare Analysis und Modellierung, Fakultät für Mathematik, Universität Duisburg-Essen, Thea-Leymann Str. 9, 45127 Essen, Germany, email: robert.martin@uni-due.de
}
}
\maketitle
\vspace{-0,6cm}
\begin{abstract}
\noindent We investigate objective corotational rates satisfying an additional, physically plausible assumption. More precisely, we require for
\begin{equation*}
	\frac{\DD^{\circ}}{\DD t}[B] = \mathbb{A}^{\circ}(B).D
\end{equation*}
that $\mathbb{A}^{\circ}(B)$ is positive definite. Here, $B = F \, F^T$ is the left Cauchy-Green tensor, $\frac{\DD^{\circ}}{\DD t}$ is a specific objective corotational rate, $D = \sym \, \DD v$ is the Eulerian stretching and $\mathbb{A}^{\circ}(B)$ is the corresponding induced fourth order tangent stiffness tensor. Well known corotational rates like the Zaremba-Jaumann rate, the Green-Naghdi rate and the logarithmic rate belong to this family of ``positive'' corotational rates.

For general objective corotational rates $\frac{\DD^{\circ}}{\DD t}$ we determine several conditions characterizing positivity. Among them an explicit condition on the material spin-functions of Xiao, Bruhns and Meyers \cite{xiao98_1}. We also give a geometrical motivation for invertibility and positivity and highlight the structure preserving properties of corotational rates that distinguish them from more general objective stress rates. Applications of this novel concept are indicated. \\
\\
\textbf{Keywords:} nonlinear elasticity, hyperelasticity, rate-formulation,
Eulerian setting, hypo-elasticity, Cauchy-elasticity, material stability, corotational derivatives, material spins, objective derivatives, chain rule \\
\\
\textbf{Mathscinet} classification
15A24, 73G05, 73G99, 74B20
\end{abstract}
\clearpage
\tableofcontents
\section{Introduction}
Corotational rates of tensors (see, for example, \cite{hashiguchi2009elastoplasticity, korobeynikovbook2000, meyers2000some, pozdeev1986, prager1961}) are essential for the proper formulation of rate-form equations in continuum mechanics. Constitutive relations that describe the behavior of engineering materials during finite elastoplastic deformations are typically expressed in terms of the rates of stresses and strains. Currently, the field includes a significant variety of corotational tensor rates (see, e.g.~\cite{asghari2008, dienes1979, dienes1987discussion, ghavam2007, gurtin1983relationship, hashiguchi2009elastoplasticity, hill1978, holzapfel2000, korobeynikovbook2000,  korobeynikov2008objective, lehmann1991, macmillan1992spin, mehrabadi1987, metzger1986objective, pozdeev1986, prager1961, reinhardt1995eulerian, reinhardt1996application, reinhardt1996, simo2006computational, sowerby1984rotations, szabo1989comparison, Truesdell65, xiao97, xiao1998direct, xiao98_2, xiao1998objective, xiao98_1, xiao1999natural, xiao2006elastoplasticity}), which have been developed over more than a century, beginning with the Zaremba–Jaumann rate (1903) (cf.~\cite{dienes1979}). Given the multitude of corotational rates, it is important to categorize these rates, along with their corresponding spin tensors, into families based on shared characteristics. Some methods for generating families of corotational tensor rates in continuum mechanics equations are discussed in \cite{xiao98_2, xiao1998objective, xiao98_1}. These methods derive families of material spin tensors through general expressions involving skew-symmetric tensor functions of basic kinematic tensors. Various techniques for generating spin tensor families associated with corotational tensor rates are explored in \cite{asghari2008, ghavam2007}.

In this contribution we will consider the advantages of corotational rates (qualifying them as the superior choice, cf.~\cite{bruhns2004}) compared to general objective stress rates or Lie-derivatives and we will define further subclasses of corotational rates so that certain structural properties are inherently conserved. In this respect, we discuss the invertibility and positivity of corotational rates. For both we provide necessary and sufficient conditions and we show that most classical corotational rates are in fact positive. This property will prove instrumental when generalizing the results from \cite{tobedone, CSP2024} to a larger class of corotational rates. Note that our aim is not to find a unique corotational rate with exceptional properties, as e.g.~the logarithmic rate is the unique corotational rate satisfying $\frac{\DD^{\log}}{\DD t}[\log V] = D$, but to make statements for whole classes of \emph{structure preserving corotational rates}. \\
\\
The outline of this paper is now as follows. We first revisit general objective tensor rates as they appear in rate-form hypo-elastic models and show their shortcomings as compared to corotational rates. Then we motivate and present novel structural conditions for corotational rates. All these conditions can be connected to the fourth order stiffness tensor $\mathbb{A}^{\circ}(B)$ appearing in the defining equation
	\begin{align}
	\frac{\DD^{\circ}}{\DD t}[B] =: \mathbb{A}^{\circ}(B).D \, ,
	\end{align}
where $B = F \, F^T$ is the left Cauchy-Green tensor, $D = \sym \, \DD v$ is the Eulerian stretching and $\frac{\DD^{\circ}}{\DD t}$ is a general corotational rate. The properties of $\mathbb{A}^{\circ}(B)$ are then investigated. In a first approach we use some rather straight forward representation formulas for material spin tensors to arrive at sufficient conditions for positive definiteness of $\mathbb{A}^{\circ}(B)$ along with conditions for the invertibility of $\mathbb{A}^{\circ}(B)$. Thereafter, we refine the analysis of $\mathbb{A}^{\circ}(B)$ by using the eigenprojection representation which allows us to arrive at necessary and sufficient conditions for positive definiteness of $\mathbb{A}^{\circ}(B)$. Finally, we apply our result to well-known corotational rates from the literature.
\subsection{Rate-form constitutive equations and objective stress-rates}
In the traditional sense of Truesdell \cite{truesdellremarks} and Noll \cite{Noll55}, a hypo-elastic material obeys a constitutive law of the following rate-form
	\begin{align}
	\label{eqthedoublestr}
	\frac{\DD^{\sharp}}{\DD t}[\sigma] = \H^{*}(\sigma) . D \qquad \iff \qquad D = [\H^{*}(\sigma)]^{-1} . \frac{\DD^{\sharp}}{\DD t}[\sigma] = \mathbb{S}^{*}(\sigma) . \frac{\DD^{\sharp}}{\DD t}[\sigma].
	\end{align}
In this format, $\frac{\DD^{\sharp}}{\DD t}$ describes an objective rate of the Cauchy stress $\sigma$, $\H^{*}(\sigma)$ is a constitutive fourth-order tangent stiffness tensor, $\mathbb{S}^*(\sigma) = [\H^*(\sigma)]^{-1}$ is the corresponding fourth-order tangent compliance tensor and $D = \sym \DD v$ is the Eulerian strain rate tensor, where $v$ describes the spatial velocity in the current configuration. For a general overview of the notation we refer to the Appendix \ref{appendixnotation}. From the representation \eqref{eqthedoublestr} it is evident that the stretch rate $D$ depends exclusively on the current stress level $\sigma$, along with the corresponding stress rate $\frac{\DD^{\sharp}}{\DD t}[\sigma]$. 

As already indicated by the notation, the objective rate $\frac{\DD^{\sharp}}{\DD t}$ and the constitutive fourth-order tangent stiffness tensor $\H^*$ a-priori need not be related to each other, meaning that once an objective rate $\frac{\DD^{\sharp}}{\DD t}$ is chosen, the tensor $\H^*$ can still be determined arbitrarily. However, if the tensor $\H^*$ is prescribed independently of the chosen objective rate $\frac{\DD^{\sharp}}{\DD t}$, as for example in the case of zero-grade hypo-elasticity, inconsistencies will occur. For this reason it appears to be a sound choice (see also the discussion in \cite[Section 2]{CSP2024}) to pick the induced tangent stiffness tensor $\H^{\sharp}$ as default choice\footnote
{
In general, when $\H^*$ is chosen as in \eqref{eqintro001}, it is defined as a function of the Finger tensor $B = F \, F^T$. Only if the Cauchy stress $B \mapsto \sigma(B)$ is invertible with inverse function $\mathcal{F}^{-1}(\sigma)$, we may write $\H^{\sharp}(B).D = \H^{\sharp}(\mathcal{F}^{-1}(\sigma)).D \equalscolon \H^{\sharp}(\sigma).D$.
},
i.e.
	\begin{align}
	\label{eqintro001}
	\H^*(B).D \colonequals \mathbb{H}^{\sharp}(B).D \colonequals \frac{\DD^{\sharp}}{\DD t}[\sigma].
	\end{align}
Here, $\mathbb{H}^{\sharp} \in \Sym_4(6)$ denotes a minor symmetric fourth order tangent stiffness tensor. It is important to realize that \eqref{eqintro001} describes a consistent constitutive law for every objective rate $\frac{\DD^{\sharp}}{\DD t}$ as long as $\H^{\sharp}(B)$ is determined according to a given Cauchy stress $B \mapsto \sigma(B)$.

In order for such a formulation to transform properly under Euclidean transformation, the stress rate on the left of \eqref{eqintro001} must satisfy the \textbf{objectivity requirement (frame-indifference)}
	\begin{align}
	\label{eqintro002}
	\frac{\DD^{\sharp}}{\DD t}[Q^T \, \sigma \, Q] = Q^T \, \frac{\DD^{\sharp}}{\DD t}[\sigma] \, Q, \qquad \forall \, Q \in \OO(3).
	\end{align}
We observe readily, that the material (or substantial) derivative $\frac{\DD}{\DD t}[\sigma]$ does not satisfy \eqref{eqintro002}. However, there are infinitely many possible pseudo stress rates (derivations) satisfying the invariance condition \eqref{eqintro002}. The only requirements necessary for such stress rates is linearity of the operation $\frac{\DD^{\sharp}}{\DD t}$ and satisfaction of a Leibniz-rule (cf.~\cite[p.~10]{kolev2024objective}) in the form
	\begin{align}
	\frac{\DD^{\sharp}}{\DD t}[f(t) \, \sigma(t)] = f(t) \, \frac{\DD^{\sharp}}{\DD t}[\sigma(t)] + f'(t) \, \sigma(t),
	\end{align}
which holds for an arbitrary objective derivative $\frac{\DD^{\sharp}}{\DD t}$, a differentiable $\sigma: \Sym^{++}(3) \to \Sym(3), B \mapsto \sigma(B)$ and $f \in C^1(\R, \R)$ (cf.~\cite[Remark 3.13]{CSP2024}). Typical representatives for objective derivatives $\frac{\DD^{\sharp}}{\DD t}$ are
	\begin{alignat}{2}
	\frac{\DD^{\CR}}{\DD t}[\sigma] &:= &&\; \frac{\DD}{\DD t}[\sigma] + L^T \, \sigma + \sigma \, L \quad \text{(non-corotational Cotter-Rivlin derivative (cf.~\cite{Cotter1955TENSORSAW})).} \notag \\
	\frac{\DD^{\Old}}{\DD t}[\sigma] &:= &&\; \frac{\DD}{\DD t}[\sigma] - (L \, \sigma + \sigma \, L^T) \quad \text{(non-corotational convective contravariant Oldroyd derivative (cf.~\cite{oldroyd1950}))}, \notag \\
	\frac{\DD^{\text{Hencky}}}{\DD t}[\sigma] &:= &&\; \frac{\DD}{\DD t}[\sigma] + \sigma \, W - W \, \sigma + \sigma \, \tr(D) \quad \text{(non-corotational Biezeno-Hencky derivative (cf.~\cite{biezeno1928})}, \notag \\
	\label{eqobjrates}
	& &&\hspace{5.1cm} \text{sometimes also called Hill-rate (cf.~\cite{korobeynikov2023})),} \\
	\frac{\DD^{\TR}}{\DD t}[\sigma] &:= &&\; \frac{\DD}{\DD t}[\sigma] - (L \, \sigma + \sigma \, L^T) + \sigma \, \tr(D) \quad \text{(non-corotational Truesdell derivative (cf.~\cite[eq.~3]{truesdellremarks})).} \notag
	\end{alignat}
Even though it will not play a role in our development, we would like to point out that there is an intimate relation of these objective rates to Lie-derivatives and covariant derivatives (cf.~\cite{kolev2024objective, Marsden83}).
\subsection{Corotational derivatives - general relations and first properties}
Further investigating which kind of rates are especially meaningful for rate type equations of the format \eqref{eqintro001}, we encounter another important subclass of objective rates: the corotational rates, denoted by $\frac{\DD^{\circ}}{\DD t}$. While \emph{objective} means that the additional rate of rotation, or spin, should be determined solely by the underlying spins in the problem and by the velocity gradient $L$, \emph{corotational} implies that the rate is taken in relation to a frame rotating relative to the observer's position (cf.~\cite[Section 2]{CSP2024}). For further insights on this topic, we refer to the work by Korobeynikov et al.~\cite{korobeynikov2018, korobeynikov2023, korobeynikov2023book, korobeynikov2024} as well as other contributions from different authors \cite{Aubram2017, bellini2015, fiala2009, fiala2016, fiala2020objective, govindjee1997, palizi2020consistent, pinsky1983, xiao97, xiao98_2, zohdi2006}. \\
\\
Recall, that corotational derivatives $\frac{\DD^{\circ}}{\DD t}$ have the general format
	\begin{align}
	\label{equniversalcororate}
	\frac{\DD^{\circ}}{\DD t}[\sigma] = \frac{\DD}{\DD t}[\sigma] - \Omega^{\circ} \, \sigma + \sigma \, \Omega^{\circ} = Q^{\circ} \, \frac{\DD}{\DD t}[(Q^{\circ})^T \, \sigma \, Q^{\circ}] \, (Q^{\circ})^T, \qquad \Omega^{\circ}(t) \in \mathfrak{so}(3),
	\end{align}
where $\Omega^{\circ} = \dot{Q}^{\circ} \, (Q^{\circ})^T$ is the defining \textbf{spin tensor} for some given ``corotated'' frame $Q^{\circ} \in \OO(3)$. The latter equality in \eqref{equniversalcororate} resembles a Lie-type format where the Cauchy stress is backrotated via the corotated frame $Q^{\circ}$. It is important to note, that a corotational rate is objective if and only if the spin tensor $\Omega^{\circ}$ transforms according to 
	\begin{align}
	\label{eqboxtrafo}
	\boxed{\Omega^{\circ} \mapsto \dot{Q} \, Q^T + Q \, \Omega^{\circ} \, Q^T}
	\end{align}
under an arbitrary Euclidean transformation $F \mapsto Q(t) \, F(t)$ (cf.~\cite[p.~49]{CSP2024}). In terms of the corotated frame $Q^{\circ}$, objectivity is expressed as $Q^{\circ} \mapsto Q \, Q^{\circ}$, i.e.~the corotated frame transforms as the continuum rotation under a Euclidean transformation.

The most well-known members of this family of corotational derivatives are perhaps
	\begin{alignat}{2}
	\frac{\DD^{\ZJ}}{\DD t}[\sigma] &:= && \; \frac{\DD}{\DD t}[\sigma] + \sigma \, W - W \, \sigma = Q^W \, \frac{\DD}{\DD t}[(Q^W)^T \, \sigma \, Q^W] \, (Q^W)^T, \quad  \text{for $Q^W(t) \in \OO(3)$ with} \; W = \dot{Q}^W \, (Q^W)^T, \notag \\
	& &&\; \text{where} \; W = \sk L \;\; \text{is the vorticity\footnotemark} \quad (\text{corotational \textbf{Zaremba-Jaumann derivative} (cf.~\cite{jaumann1905, jaumann1911geschlossenes, zaremba1903forme}}), \notag \\
	\frac{\DD^{\GN}}{\DD t}[\sigma] &:= &&\; \frac{\DD}{\DD t}[\sigma] + \sigma \, \Omega^R - \Omega^R \, \sigma = R \, \frac{\DD}{\DD t}[R^T \, \sigma \, R] \, R^T, \quad \text{for $R(t) \in \OO(3)$ with the ``polar spin''} \, \Omega^R := \dot{R} \, R^T, \notag \\
	& &&\; \text{with the polar decomposition} \; F = R \, U \quad \text{(corotational \textbf{Green-Naghdi derivative} (cf.~\cite{bellini2015, Green_McInnis_1967,  Green1965, naghdi1961}))}, \notag \\
	\frac{\DD^{\log}}{\DD t}[\sigma] &:= &&\; \frac{\DD}{\DD t}[\sigma] + \sigma \, \Omega^{\log} - \Omega^{\log} \, \sigma, \quad \text{for $Q^{\log}(t) \in \OO(3)$ with the ``logarithmic spin''} \; \Omega^{\log} = \dot{Q}^{\log} \, (Q^{\log})^T \notag \\
	& &&\; \text{(corotational \textbf{logarithmic derivative} (cf.~\cite{xiao98_1}))}. \\
	\frac{\DD^{\GS}}{\DD t}[\sigma] &:= &&\; \frac{\DD}{\DD t}[\sigma] + \sigma \, \Omega^{\GS} - \Omega^{\GS} \, \sigma, \quad \text{for $Q^{\GS}(t) \in \OO(3)$ with the spin} \; \Omega^{\GS} = \dot{Q}^{\GS} \, (Q^{\GS})^T, \notag \\
	& &&\; \text{where} \quad V = Q^{\GS}  \diag(\lambda_1, \lambda_2, \lambda_3) \, (Q^{\GS})^T \quad \text{(corotational \textbf{Gurtin-Spear derivative} (cf.~\cite{gurtin1983relationship, hill1978}))}. \notag \\
	\frac{\DD^{\Aif}}{\DD t}[\sigma] &:= &&\; \frac{\DD}{\DD t}[\sigma] + \sigma \, \Omega^{\Aif} - \Omega^{\Aif} \, \sigma, \quad \text{with the Aifantis spin} \; \Omega^{\Aif} = W + \zeta \, (\sigma^{\iso} \, D - D \, \sigma^{\iso}), \notag \\
	& &&\; \text{where $\zeta$ is a proportionality factor} \quad \text{(corotational \textbf{Aifantis derivative}\footnotemark (cf.~\cite[eq.~(2.17)]{Zbib1988}))}. \notag
	\end{alignat}
\addtocounter{footnote}{-1}\footnotetext
{
It can be shown that $W = \dot{R}R^T + R \, \sk(\dot{U}U^{-1}) \, R^T$ cf. Gurtin et al. \cite{Gurtin2010}, Nasser et al. \cite{mehrabadi1987} and the books by Ogden \cite[p.126]{Ogden83} and Truesdell \cite[p.21]{truesdell1966}.
}
\addtocounter{footnote}{+1}\footnotetext
{
Using the Richter representation \cite{richter1948isotrope, richter1949hauptaufsatze, Richter50, Richter52} $\sigma(B) = \varphi_0 \, \id + \varphi_1 \, B + \varphi_2 \, B^2$ shows that $\Omega^{\Aif}$ belongs to the class of material spins (cf.~Definition \ref{appmaterialspins}) if $\sigma^{\iso}(\alpha \, B) = \sigma^{\iso}(B)$.
}
In \cite{CSP2024}, the authors discuss the advantages\footnote
{
An early investigation favoring the Zaremba-Jaumann rate is given by \cite[p.~603]{zaremba1903forme}.
}
and disadvantages of using corotational rates in a hypo-elastic framework. For example, even when choosing corotational rates, there is still an infinite number of possibilities, as different choices of the corotated frame $Q^{\circ}$ in formulas \eqref{equniversalcororate} and \eqref{eqboxtrafo} demonstrate.

In any case, corotational rates have several properties that distinguish them from more general objective stress rates like the ones in \eqref{eqobjrates} and Lie-derivatives (cf.~\cite{bruhns2004, guo63, Norris2008, prager1961, prager1962}). Notably, corotational derivatives satisfy the product rule\footnote
{
Guo \cite[p.157]{guo63} and Prager \cite[eq.~(2.6)]{prager1962} have already observed this property for the Zaremba-Jaumann rate.
}
(Proposition \ref{prodrulecoro}) and a hitherto unknown universal chain rule\footnote
{
In \cite[p.~920]{bruhns2004} the authors mention: ``Since the Euclidean structure of the Galilean space-time remains unchanged under the change of frame, a corotational rate associated with a spinning frame obeys the basic rules for derivatives, \emph{such as the product rule and Leibniz chain rule, etc}. However, the same might not be true for a non-corotational rate associated with a convective frame, since the Euclidean structure of the Galilean space-time is distorted under the change of frame.'' Unfortunately, there is \emph{no proof} for this statement or it might refer only to the primary matrix case \cite{xiao98_1}.
}
(Proposition \ref{apppropa20}) for isotropic tensor functions (for a proof we refer to \cite[Section 3.2.3]{CSP2024}). A basic ingredient in the related calculus is (cf.~\cite[Lemma A.13]{CSP2024} for a proof and more details)
\begin{rem} \label{theoneremark}
Consider an \textbf{arbitrary corotational derivative} $\frac{\DD^{\circ}}{\DD t}$ (not necessarily objective) with spin tensor $\Omega^{\circ} \in \mathfrak{so}(3)$ for an isotropic function $\sigma = \sigma(B)$, i.e.
	\begin{align}
	\frac{\DD^{\circ}}{\DD t}[\sigma] = \frac{\DD}{\DD t}[\sigma] - \Omega^{\circ} \, \sigma + \sigma \, \Omega^{\circ}.
	\end{align}
Then the general relation
	\begin{equation}
	\label{eqgeneralomsig}
	\boxed{\Omega^{\circ} \, \sigma(B) - \sigma(B) \, \Omega^{\circ} = \DD_B\sigma(B).[\Omega^{\circ} \, B - B \, \Omega^{\circ}],}
	\end{equation}
holds, alternatively expressed via the Lie-bracket $[A,B] = A \, B - B \, A$ as
	\begin{equation}
	\boxed{[\Omega^{\circ}, \sigma(B)] = \DD_B\sigma(B).[\Omega^{\circ}, B].}
	\end{equation}
\end{rem}
\begin{prop}[Product rule for corotational rates] \label{prodrulecoro}
For two isotropic and differentiable tensor functions $\sigma_1, \sigma_2: \Sym^{++}(3) \to \Sym(3)$ we have the product rule
	\begin{align}
	\frac{\DD^{\circ}}{\DD t}[\sigma_1(B) \, \sigma_2(B)] = \frac{\DD^{\circ}}{\DD t}[\sigma_1(B)] \, \sigma_2(B) + \sigma_1(B) \, \frac{\DD^{\circ}}{\DD t}[\sigma_2(B)] \, .
	\end{align}
\end{prop}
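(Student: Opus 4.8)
The plan is to reduce the statement to the defining formula \eqref{equniversalcororate} for corotational rates together with the ordinary Leibniz rule for the material derivative $\frac{\DD}{\DD t}$. First I would write out, using the definition, the left-hand side applied to the product $\sigma_1(B)\,\sigma_2(B)$:
\begin{align*}
	\frac{\DD^{\circ}}{\DD t}[\sigma_1(B)\,\sigma_2(B)] = \frac{\DD}{\DD t}[\sigma_1(B)\,\sigma_2(B)] - \Omega^{\circ}\,\sigma_1(B)\,\sigma_2(B) + \sigma_1(B)\,\sigma_2(B)\,\Omega^{\circ}.
\end{align*}
Since $\frac{\DD}{\DD t}$ is just the substantial (material) time derivative, it satisfies the usual product rule, so $\frac{\DD}{\DD t}[\sigma_1\sigma_2] = \frac{\DD}{\DD t}[\sigma_1]\,\sigma_2 + \sigma_1\,\frac{\DD}{\DD t}[\sigma_2]$. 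Substituting this in, the task becomes purely algebraic: one must show that the spin correction terms reassemble into $\bigl(-\Omega^{\circ}\sigma_1 + \sigma_1\Omega^{\circ}\bigr)\sigma_2 + \sigma_1\bigl(-\Omega^{\circ}\sigma_2 + \sigma_2\Omega^{\circ}\bigr)$.

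The key step is the telescoping cancellation: in the expression
\begin{align*}
	(-\Omega^{\circ}\sigma_1 + \sigma_1\Omega^{\circ})\,\sigma_2 + \sigma_1\,(-\Omega^{\circ}\sigma_2 + \sigma_2\Omega^{\circ}) = -\Omega^{\circ}\sigma_1\sigma_2 + \sigma_1\Omega^{\circ}\sigma_2 - \sigma_1\Omega^{\circ}\sigma_2 + \sigma_1\sigma_2\Omega^{\circ},
\end{align*}
the two middle terms $+\sigma_1\Omega^{\circ}\sigma_2$ and $-\sigma_1\Omega^{\circ}\sigma_2$ cancel, leaving exactly $-\Omega^{\circ}\sigma_1\sigma_2 + \sigma_1\sigma_2\Omega^{\circ}$, which matches the spin correction appearing on the left-hand side. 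Combining with the material-derivative product rule yields the claim. Note this argument uses only linearity, the Leibniz rule for $\frac{\DD}{\DD t}$, and associativity of the matrix product; isotropy of $\sigma_1,\sigma_2$ is not actually needed here (it is presumably included in the hypothesis only so that $\frac{\DD^{\circ}}{\DD t}$ is applied in the setting of Remark \ref{theoneremark} and the surrounding calculus, or to guarantee $\sigma_1\sigma_2$ is again a function of $B$ to which the rate applies).

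I do not anticipate a genuine obstacle: the only thing to be slightly careful about is that the product $\sigma_1(B)\,\sigma_2(B)$ of two symmetric matrices is in general \emph{not} symmetric, so one should either note that the corotational rate $\frac{\DD^{\circ}}{\DD t}[\,\cdot\,]$ as defined by $X \mapsto \frac{\DD}{\DD t}[X] - \Omega^{\circ}X + X\Omega^{\circ}$ makes sense on all of $\mathbb{R}^{3\times 3}$ (not just on $\Sym(3)$), or restrict attention to the cases where $\sigma_1$ and $\sigma_2$ commute (e.g.\ both functions of $B$ via the spectral calculus), in which case the product is symmetric. The cleanest exposition is to state the rate on general second-order tensors and let the telescoping identity do the work; the symmetry of the individual factors plays no role in the cancellation.
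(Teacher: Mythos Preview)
Your argument is correct and is the standard direct verification: expand via the defining formula \eqref{equniversalcororate}, apply the Leibniz rule for the material derivative, and observe the telescoping cancellation $+\sigma_1\Omega^{\circ}\sigma_2 - \sigma_1\Omega^{\circ}\sigma_2 = 0$. Your remark that isotropy plays no role in the algebra is also accurate; the identity holds for any two time-dependent second-order tensors.

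Note, however, that the paper does not supply its own proof of this proposition; it merely states the result and, in the surrounding text, refers the reader to \cite[Section 3.2.3]{CSP2024}. The one hint the paper does give is the Lie-type representation in \eqref{equniversalcororate}, namely $\frac{\DD^{\circ}}{\DD t}[\sigma] = Q^{\circ}\,\frac{\DD}{\DD t}\bigl[(Q^{\circ})^T\sigma\,Q^{\circ}\bigr]\,(Q^{\circ})^T$. From this the product rule is a one-liner: insert $Q^{\circ}(Q^{\circ})^T = \id$ between $\sigma_1$ and $\sigma_2$, apply the ordinary Leibniz rule for $\frac{\DD}{\DD t}$ to the product $\bigl[(Q^{\circ})^T\sigma_1 Q^{\circ}\bigr]\bigl[(Q^{\circ})^T\sigma_2 Q^{\circ}\bigr]$, and conjugate back. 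This is of course exactly your telescoping computation in disguise, so the two routes are equivalent; the Lie-type version just hides the cancellation inside the orthogonality of $Q^{\circ}$.
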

\begin{prop}[Chain rule for corotational rates] \label{apppropa20}
Let $\frac{\DD^{\circ}}{\DD t}$ be an arbitrary corotational rate with spin tensor $\Omega^{\circ} \in \mathfrak{so}(3)$ and an isotropic, differentiable function $\sigma = \sigma(B) = \widehat \sigma(\log B)$. Then we have the chain rule
	\begin{align}
	\frac{\DD^{\circ}}{\DD t}[\widehat \sigma] = \DD_{\log B} \widehat \sigma(\log B) . \frac{\DD^{\circ}}{\DD t}[\log B] = \DD_B \sigma(B). \frac{\DD^{\circ}}{\DD t}[B].
	\end{align}
\end{prop}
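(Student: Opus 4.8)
The plan is to derive the corotational chain rule from the elementary chain rule for the material (substantial) derivative together with the commutator identity of Remark~\ref{theoneremark}. I would establish the second equality $\frac{\DD^{\circ}}{\DD t}[\sigma(B)] = \DD_B\sigma(B).\frac{\DD^{\circ}}{\DD t}[B]$ first, and for an \emph{arbitrary} isotropic differentiable $\sigma\colon\Sym^{++}(3)\to\Sym(3)$. Starting from the defining relation \eqref{equniversalcororate}, $\frac{\DD^{\circ}}{\DD t}[\sigma(B)] = \frac{\DD}{\DD t}[\sigma(B)] - \Omega^{\circ}\,\sigma(B) + \sigma(B)\,\Omega^{\circ}$, I would express the material derivative of $\sigma(B)$ by the usual chain rule for a map composed with a curve, $\frac{\DD}{\DD t}[\sigma(B)] = \DD_B\sigma(B).\frac{\DD}{\DD t}[B]$, and rewrite the commutator $\Omega^{\circ}\,\sigma(B) - \sigma(B)\,\Omega^{\circ}$ by Remark~\ref{theoneremark} as $\DD_B\sigma(B).[\Omega^{\circ}\,B - B\,\Omega^{\circ}]$. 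Exploiting linearity of $\DD_B\sigma(B)$ to collect these contributions gives $\DD_B\sigma(B).\big(\frac{\DD}{\DD t}[B] - \Omega^{\circ}\,B + B\,\Omega^{\circ}\big)$, and the bracketed tensor is exactly $\frac{\DD^{\circ}}{\DD t}[B]$, again by \eqref{equniversalcororate} applied to the (trivially isotropic) identity map $B\mapsto B$. This yields the second equality.

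For the first equality I would chain through the tensor logarithm, which is itself isotropic and differentiable, $\log\colon\Sym^{++}(3)\to\Sym(3)$. Since the derivation above uses only isotropy and differentiability, the second equality applies with $\sigma$ replaced by $\log$, giving $\frac{\DD^{\circ}}{\DD t}[\log B] = \DD_B\log(B).\frac{\DD^{\circ}}{\DD t}[B]$. On the other hand, from $\sigma = \widehat\sigma\circ\log$ the ordinary Fréchet chain rule gives $\DD_B\sigma(B) = \DD_{\log B}\widehat\sigma(\log B)\circ\DD_B\log(B)$. Substituting the latter into the second equality and then inserting the former identity for $\frac{\DD^{\circ}}{\DD t}[\log B]$ produces $\frac{\DD^{\circ}}{\DD t}[\sigma(B)] = \DD_{\log B}\widehat\sigma(\log B).\big(\DD_B\log(B).\frac{\DD^{\circ}}{\DD t}[B]\big) = \DD_{\log B}\widehat\sigma(\log B).\frac{\DD^{\circ}}{\DD t}[\log B]$, which is the remaining claim.

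I do not expect a serious obstacle, since Remark~\ref{theoneremark} carries the analytic content and the rest is linear bookkeeping; in particular no objectivity of $\frac{\DD^{\circ}}{\DD t}$ is invoked, consistent with the stated hypotheses. The one point worth a careful check is that Remark~\ref{theoneremark} may be legitimately applied to the map $\log$, whose values lie in $\Sym(3)$ rather than in $\Sym^{++}(3)$ — i.e.\ that its proof (via \cite{CSP2024}) uses only isotropy and differentiability of the argument function and not positive definiteness of its values, which is indeed the case. Alternatively one can sidestep $\log$ altogether, regarding the second equality as the substantive statement and the first as a formal consequence of the composition rule for Fréchet derivatives.
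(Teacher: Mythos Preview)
Your proof is correct and follows precisely the approach the paper points to: the paper defers the full argument to \cite[Section~3.2.3]{CSP2024}, but it identifies Remark~\ref{theoneremark} as ``a basic ingredient in the related calculus,'' and your derivation is exactly the natural unwinding of that ingredient together with the ordinary chain rule and linearity of $\DD_B\sigma(B)$. Your handling of the $\log$ step via a second application of the same identity, plus the Fr\'echet chain rule for the composition $\widehat\sigma\circ\log$, is the expected route and matches the spirit of the cited references.
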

\begin{proof}
For the proof we refer to \cite[Section 3.2.3]{CSP2024}, also using results from \cite[p.~19, Theorem 2]{xiao98_1}, \cite[p.~7]{fiala2020objective}, \break \cite[p.~1066, Theorem 2.3]{korobeynikov2018} and \cite[p.~252, Lemma 1]{Norris2008}, where the chain rule for primary matrix functions is supplied.
\end{proof}
\noindent Note, that an arbitrary objective rate $\frac{\DD^{\sharp}}{\DD t}$ (like e.g.~the Truesdell rate) in general neither satisfies a chain nor a product rule (cf.~\cite[p.22-23]{CSP2024}).

Another difference (see also the discussion in \cite[p.~919]{bruhns2004}) between corotational rates $\frac{\DD^{\circ}}{\DD t}$ and arbitrary objective rates $\frac{\DD^{\sharp}}{\DD t}$ is displayed by the fact that for any corotational rate we have the identity\footnote
{
This identity was already observed by Guo \cite[p.157]{guo63} for the Zaremba-Jaumann rate.
}
	\begin{align}
	\label{eqcid}
	\frac{\DD^{\circ}}{\DD t}[c \, \id] = \frac{\DD}{\DD t}[c \, \id] + c \, \id \, \Omega^{\circ} - \Omega^{\circ} \, c \, \id = 0, \qquad c = \text{const.} \, ,
	\end{align}
while e.g.~for the Truesdell (non-corotational) rate it is
	\begin{align}
	\label{eqTRnoncor}
	\frac{\DD^{\TR}}{\DD t}[c \, \id] = \frac{\DD}{\DD t}[c \, \id] - (L \, c \, \id + c \, \id \, L^T) + c \, \id \, \tr(D) = c \, (\tr(D) \, \id - 2 \, D) \neq 0.
	\end{align}
More generally, for a corotational rate $\frac{\DD^{\circ}}{\DD t}$ with spin tensor $\Omega^{\circ}$ and any constant tensor $S \in \Sym(3)$ we obtain
	\begin{align}
	\frac{\DD^{\circ}}{\DD t}[S] = \frac{\DD}{\DD t}[S] + S \, \Omega^{\circ} - \Omega^{\circ} \, S = [S, \Omega^{\circ}].
	\end{align}

Furthermore, corotational rates conserve physical properties of the stress tensor $\sigma$. More precisely, if $I(\sigma(t))$ is any isotropic scalar invariant of $\sigma$, i.e.~$I(\sigma(t))$ fulfills
	\begin{align}
	I(\sigma(t)) = I(Q^T(t) \, \sigma(t) \, Q(t)) \qquad \text{for any $Q(t) \in \OO(3),$}
	\end{align}
then (cf.~\cite{prager1961} and \cite[p.~6]{CSP2024}) using $\eqref{equniversalcororate}_2$
	\begin{align}
	\label{eqlemma3.1}
	\frac{\DD^{\circ}}{\DD t}[\sigma] = 0 \qquad \implies \qquad \frac{\DD}{\DD t}I(\sigma(t)) = 0.
	\end{align}
The latter implies that if $\frac{\DD^{\circ}}{\DD t}[\sigma] = 0$, all eigenvalues of $\sigma(t)$ remain constant. Similarly, we can see that for all corotational rates $\frac{\DD^{\circ}}{\DD t}$
	\begin{align}
	2 \, \langle \frac{\DD^{\circ}}{\DD t}[\sigma], \sigma \rangle = \frac{\DD}{\DD t}[\norm{\sigma}^2] \, ,
	\end{align}
generalizing a formula for the Zaremba-Jaumann rate given in \cite[p.~193]{holzapfel2000}.

Lastly, for a perfect elastic fluid (cf.~\cite[p.~10]{Marsden83}), i.e.~$\sigma$ is given by $\sigma(B) = h'(\sqrt{\det B}) \, \id$, all corotational rates $\frac{\DD^{\circ}}{\DD t}$ of $\sigma$ coincide. To see this, we first use the special structure $\sigma(B) = \alpha(B) \, \id$ of $\sigma$ to conclude that
	\begin{align}
	\frac{\DD^{\circ}}{\DD t}[\sigma] = \frac{\DD}{\DD t}[\sigma] \underbrace{+ \sigma \, \Omega^{\circ} - \Omega^{\circ} \, \sigma}_{= \, 0 \; \text{by the structure of $\sigma$}} = \frac{\DD}{\DD t}[\sigma].
	\end{align}
Then we calculate directly
	\begin{equation}
	\label{eqperfcomp01}
	\begin{alignedat}{2}
	\frac{\DD}{\DD t}[h'(\sqrt{\det B})] &= h''(\sqrt{\det B}) \, \frac12 \, (\sqrt{\det B})^{-\frac12} \, \frac{\DD}{\DD t}[\det B] = h''(\sqrt{\det B}) \, \frac12 \, (\sqrt{\det B})^{-\frac12} \, \langle \Cof B, \frac{\DD}{\DD t}[B] \rangle \\
	&= h''(\sqrt{\det B}) \, \frac12 \, (\sqrt{\det B})^{-\frac12} \, \det B \, \langle B^{-1}, L \, B + B \, L^T \rangle \\
	&= h''(\sqrt{\det B}) \, \frac12 \, \sqrt{\det B} \, \langle \id, L + L^T \rangle = h''(\sqrt{\det B}) \, \sqrt{\det B} \, \tr(D) \, ,
	\end{alignedat}
	\end{equation}
yielding for every corotational rate $\frac{\DD^{\circ}}{\DD t}$ the expression
	\begin{align}
	\H^{\circ}(\sigma).D = \frac{\DD^{\circ}}{\DD t}[\sigma] = \frac{\DD}{\DD t}[h'(\sqrt{\det B}) \, \id] = h''(\sqrt{\det B}) \, \sqrt{\det B} \, \tr(D) \, \id \, .
	\end{align}
For non-corotational rates the same does not hold. For example, the Truesdell derivative for this choice of $\sigma(B)$ is given by
	\begin{align}
	\frac{\DD^{\TR}}{\DD t}[\sigma] = h''(\sqrt{\det B}) \, \sqrt{\det B} \, \tr(D) \, \id + h'(\sqrt{\det B}) \, (\tr(D) \, \id - 2 \, D),
	\end{align}
which can be seen from the identity $\frac{\DD}{\DD t}[h'(\sqrt{\det B})] = h''(\sqrt{\det B}) \, \sqrt{\det B} \, \tr(D)$ proven in \eqref{eqperfcomp01} combined with \eqref{eqTRnoncor}.
\subsection{Structure preserving properties - motivation from the one-dimensional case}
Next, we propose further qualitative properties that may single out a physically useful class of corotational rates among all possible corotational rates. For the general idea, we first reconsider the one-dimensional case. Consider a monotone increasing, differentiable scalar function $e: \R^+ \to \R$. We can express the monotonicity of $e$ as the local condition $\DD e(\ell) = e'(\ell) \ge 0$. We say $e: \R^+ \to \R$ is \textbf{strongly monotone}, if $\DD e(\ell) > 0 \; \forall \, \ell \in \R^+$. Considering a differentiable parametrization $t \mapsto \ell(t) \in \R^+$, we may rewrite the strong monotonicity of $e$ in a pseudo rate-type format, requiring for all parametrizations $t \mapsto \ell(t) \in \R^+$
	\begin{align}
	\label{eqonedimmon}
	\frac{\dif}{\dif t}[e(\ell(t))] \, \dot \ell(t) > 0, \quad \forall \, \dot \ell(t) \neq 0 \qquad \iff \qquad \DD e(\ell(t)) \, \dot \ell(t) \, \dot \ell(t) > 0 \qquad \iff \qquad \DD e(\ell(t)) > 0.
	\end{align}
In view of \eqref{eqcid}, we trivially have $\DD[c \cdot 1] = 0$. We now interpret the positivity $\eqref{eqonedimmon}_1$ as a property of the usual derivative operation ``$\frac{\dif}{\dif t}$'' in conjunction with strongly monotone functions $e(\ell(t))$. This means that multiplying $\frac{\dif}{\dif t}[e(\ell(t))]$ by $\dot \ell(t)$ shows monotonicity of $e$.

Transfering this idea to corotational rates $\frac{\DD^{\circ}}{\DD t}$, we would like to have an ``equivalent'' structural property satisfied. In the three-dimensional setting, generalizing the strongly monotone function $e$ from above, we restrict our attention to \textbf{spatial strain tensors} (cf.~\cite[p.~509]{Neff_Osterbrink_Martin_Hencky13}), i.e.~primary matrix functions $\mathcal{E}: \Sym^{++}(3) \to \Sym(3)$, satisfying for any $Q \in \OO(3)$ the defining relation
	\begin{equation}
	\label{eqdefstrain1}
	\begin{alignedat}{2}
	&\left\{
		\begin{array}{ll}
		&\mathcal{E}(B) = \mathcal{E}(Q^T \diag(B) \, Q) \overset{\text{isotropy}}{=} Q^T \, \mathcal{E}(\diag(B)) \, Q \\
		= &Q^T \, \mathcal{E}(\diag(\lambda_1^2, \lambda_2^2, \lambda_3^2)) \, Q \overset{\substack{\text{primay matrix} \\ \text{function}^\ast}}{=} Q^T \, \diag(e(\lambda_1^2), e(\lambda_2^2), e(\lambda_3^2)) \, Q
		\end{array}
	\right. \\
	& \qquad \text{and} \qquad \mathcal{E}(B) = 0 \qquad \iff \qquad B = \id
	\end{alignedat}
	\end{equation}
where $e: \R^+ \to \R$ is an appropriately chosen\footnote
{
The exact conditions for the scale function differ among authors. For instance Hill \cite[p.~459]{hill1970constitutive} and \cite[p.~14]{hill1978} requires $e$ to be ``suitably smooth'' and monotone with $e(1) = 0$ and $e'(1) = 1$, whereas Ogden \cite[p.~118]{Ogden83} also requires $e$ to be infinitely differentiable and $e'>0$ to hold on all of $\R^+$ (see also \cite[p.~509]{Neff_Osterbrink_Martin_Hencky13}).
}
\textbf{scale function}. In our context, ``appropriate'' requires that the scale function $e: \R^+ \to \R$ is differentiable, \textbf{strongly monotone increasing}, i.e.~$\DD e(\lambda)>0 \; \forall \, \lambda \in \R^+$, and fulfills a normalization property, depending on the chosen strain (e.g.~the Seth-Hill family with \break $e(1) = 2 \, e'(1) - 1 = 0$, see Section \ref{secsethhill}). 
\begin{rem}
Tensor functions $p$ that only satisfy the relation $\eqref{eqdefstrain1}_{\ast}$ are here called ``primary matrix functions''. Sometimes they are denoted by classical or analytic tensor functions. Primary matrix functions form a strict subclass of isotropic tensor functions. Some prominent examples for primary matrix functions are $p(B) = B$, $p(B) = \log B$ and $p(B) = \sqrt{B}$, while $p(B) = \det B$ is not a primary matrix function.
\end{rem}
As a first observation we point out that any spatial strain measure $\mathcal{E}(B) = \widehat{\mathcal{E}}(\log B)$ vanishes for a rigid motion since $F = Q(t) \in \SO(3)$ implies $B = F\, F^T = \id$. On the other hand in the spatial picture rigid motions correspond to $D \equiv 0$. Hence, for a rigid motion, we are led to impose the equivalence
	\begin{align}
	\label{eqinvert001}
	0 = \frac{\DD^{\circ}}{\DD t}[\mathcal{E}(B)] \overset{\text{chain rule}}{=} \DD_B \mathcal{E}(B). \frac{\DD^{\circ}}{\DD t}[B] \overset{\text{chain rule}}{=} \DD_{\log B} \widehat{\mathcal{E}}(B). \DD_B \log B. \frac{\DD^{\circ}}{\DD t}[B] \qquad \iff \qquad D = 0 \, .
	\end{align}
Since $\DD_B \log B \in \Sym^{++}_4(6)$ as well as $\DD_{\log B} \widehat{\mathcal{E}}(B) \in \Sym^{++}_4(6)$ due to the strong monotonicity of the scale function $e$ and the logarithm (cf.~\cite{CSP2024}), we see that requirement \eqref{eqinvert001} is equivalent to
	\begin{align}
	\label{eqinvert002}
	\frac{\DD^{\circ}}{\DD t}[B] = 0 \qquad \iff \qquad D = 0 \, .
	\end{align}
\begin{rem}
Note the following relation of \eqref{eqinvert001} to the one-dimensional case:
	\begin{align}
	\label{eqreme01}
	\left(\frac{\dif}{\dif t} e(\ell(t)) = 0 \quad \text{and} \quad e(1) = 0\right) \quad \implies \quad \bigg(e(\ell(t)) = 0 \quad \implies \quad \ell(t) = 1 \quad \implies \quad \dot \ell(t) = 0\bigg) \, ,
	\end{align}
which translates in three dimensions to
	\begin{align}
	\left(\frac{\DD^{\circ}}{\DD t} \mathcal{E}(B) = 0 \quad \text{and} \quad \mathcal{E}(\id) = 0\right) \quad \overset{!}{\implies} \quad \bigg(\mathcal{E}(B) = 0 \quad \implies \quad B = \id \quad \implies \quad D = 0\bigg) \, .
	\end{align}
Note also that \eqref{eqreme01} can be rewritten using $\widehat e \colon \R \to \R, \; \widehat e(\log \ell(t)) := e(\ell(t))$ with $\widehat e(0) = 0$ and observe\footnote
{
In a one-dimensional setting we have $F(t) = \diag(\ell(t),1,1)$ for $\varphi(x,t) = (\ell(t) \, x_1, x_2, x_3)$ with $\dot F(t) = \diag(\dot \ell(t), 0, 0), \break F^{-1}(t) = \diag(\frac{1}{\ell(t)}, 1, 1)$ so that $D = \sym \, L = \diag(\dot \ell(t), 0, 0) \cdot \diag(\frac{1}{\ell(t)}, 1, 1) = \diag(\frac{\dot \ell(t)}{\ell(t)}, 0, 0)$. This justifies denoting $\frac{\dot \ell(t)}{\ell(t)} \cong \overline D$.
}
	\begin{align}
	\frac{\dif}{\dif t}[\widehat e(\log \ell(t))] = \DD_{\log \ell} \widehat e(\log \ell(t)) \, \DD_{\ell} \log \ell(t) \, \dot \ell(t) = \DD_{\log \ell} \widehat e(\log \ell(t)) \, \frac{\dot \ell(t)}{\ell(t)} = \DD_{\log \ell} \widehat e(\log \ell(t)) \, \overline D, \quad \overline D = \frac{\dot \ell(t)}{\ell(t)} \, ,
	\end{align}
so that we can also write 
	\begin{align}
	\forall \, \overline D \neq 0: \qquad \frac{\dif}{\dif t}[\widehat e(\log \ell(t))] \, \overline D > 0 \qquad \iff \qquad \DD_{\log \ell} \widehat e (\log \ell(t)) > 0 \, .
	\end{align}
\end{rem}
If we have a representation $\frac{\DD^{\circ}}{\DD t}[B] = \mathbb{A}^{\circ}(B).D$ then \eqref{eqinvert002} demands invertibility\footnote
{
Xiao et al.~\cite{xiao98_1} write, regarding the invertibility of an arbitrary objective corotational rate $\frac{\DD^{\circ}}{\DD t}$ in our notation: ``Now another relevant question is wheter or not a given objective corotational rate $\frac{\DD^{\circ}}{\DD t}$ can serve as a complete measure of the rate of change of deformation. Precisely, we say that an objective corotational rate $\frac{\DD^{\circ}}{\DD t}$ is equivalent to the stretching $D$ if there is a one-to-one correspondence between them.'' Additionally, they have observed \cite[eq.~(4.68)]{xiao98_1} that $\mathbb{A}^{\GS}(B)$ is not invertible, where $\mathbb{A}^{\GS}(B)$ denotes the Gurtin-Spear corotational rate $\frac{\DD^{\GS}}{\DD t}[B] = \mathbb{A}^{\GS}(B).D$. Thus, $\frac{\DD^{\GS}}{\DD t}$ is not an invertible rate.
}
of the fourth order stiffness tensor $\mathbb{A}^{\circ}(B)$. Thus we are led to
\begin{definition}[Invertible corotational rates]
An arbitrary corotational rate $\frac{\DD^{\circ}}{\DD t}$ is called \textbf{invertible corotational rate}, if for all $B = F \, F^T \in \Sym^{++}(3)$:
	\begin{align}
	\frac{\DD^{\circ}}{\DD t}[B] = 0 \qquad \iff \qquad D = 0.
	\end{align}
\end{definition}
\noindent Analogously, the considerations about strong monotonicity in one dimension lead to the following new
\begin{definition}[Positive corotational rates] \label{appendixpcd}
An arbitrary corotational rate $\frac{\DD^{\circ}}{\DD t}$ is called \textbf{positive corotational rate}, if for all $B = F \, F^T \in \Sym^{++}(3)$:
	\begin{align}
	\label{eqdefposcor001}
	\langle \frac{\DD^{\circ}}{\DD t}[B], D \rangle > 0 \qquad \forall \, D \in \Sym(3) \! \setminus \! \{0\}.
	\end{align}
In other words, for $\mathbb{A}^{\circ}(B)$ defined by 
	\begin{align}
	\frac{\DD^{\circ}}{\DD t}[B] = \mathbb{A}^{\circ}(B).D
	\end{align}
we have $\mathbb{A}^{\circ}(B) \in \Sym^{++}_4(6)$.
\end{definition}
\noindent Recalling some well-known corotational rates like the Zaremba-Jaumann, the Green-Naghdi or the logarithmic (objective) corotational rate, we readily observe
	\begin{equation}
	\label{eqprominentposdef}
	\begin{alignedat}{2}
	\langle \frac{\DD^{\ZJ}}{\DD t}[B], D \rangle = \langle B \, D + D \, B, D \rangle &> 0, \qquad \langle \frac{\DD^{\GN}}{\DD t}[B], D \rangle = 2 \, \langle V \, D \, V, D \rangle > 0 \\
	\text{and} \qquad \langle \frac{\DD^{\log}}{\DD t}[B], D \rangle &= 2 \, \langle [\DD_B \log B]^{-1}. D , D \rangle > 0 \, ,
	\end{alignedat}
	\end{equation}
showing that these corotational rates belong to the newly defined classes of invertible and even positive corotational rates (cf.~\cite[Section 3.4.1]{CSP2024}). \\
\\
Motivated by our foregoing considerations, we additionally conjecture
\begingroup
\renewcommand*{\arraystretch}{2.2}
\begin{conjecture} \label{conj1.5}
	Let $\mathcal{E}: \Sym^{++}(3) \to \Sym(3)$ be a spatial strain tensor with strongly monotone scale function $e: \R^+ \to \R$. Then we have the equivalence:
	\begin{align*}
	\left\{
		\begin{array}{l}
		\dd \frac{\DD^{\circ}}{\DD t} \; \text{is a positive corotational rate} \\
		\langle \dd \frac{\DD^{\circ}}{\DD t}[B], D \rangle > 0
		\end{array}
	\right. \quad \iff \quad \left\{
		\begin{array}{l}
		\text{(1D):} \quad \left\{
			\begin{array}{l}
			\dd \frac{\dif}{\dif t}[e(\ell(t))] \, \dot \ell(t) > 0, \qquad \forall \, \dot \ell(t) \neq 0, \\
			\dd \frac{\dif}{\dif t}[\widehat e(\log \ell(t)] \, \overline D > 0, \qquad \forall \, \overline D = \frac{\dot \ell(t)}{\ell(t)} \neq 0,
			\end{array} \right. \\
		\text{(3D):} \quad \langle \dd \frac{\DD^{\circ}}{\DD t}[\mathcal{E}(B)], D \rangle > 0 \qquad \forall \, D \in \Sym(3) \! \setminus \! \{0\}.
		\end{array}
	\right.
	\end{align*}
\end{conjecture}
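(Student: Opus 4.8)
The plan is to reduce the claimed equivalence to the chain rule for corotational rates (Proposition~\ref{apppropa20}) together with the eigenprojection structure of the stiffness tensor $\mathbb{A}^{\circ}(B)$, after first disposing of the one-dimensional conditions on the right-hand side, which are \emph{automatically} satisfied under the standing hypothesis. Indeed, by \eqref{eqonedimmon} the first $(\mathrm{1D})$ requirement is nothing but $\DD e(\ell)>0$ on $\R^{+}$, i.e.\ the assumed strong monotonicity of the scale function; and, rewriting with $\widehat e=e\circ\exp$ as in the Remark after \eqref{eqinvert002}, the second reads $\DD_{\log\ell}\widehat e(\log\ell)=e'(\ell)\,\ell>0$, which again holds since $e'>0$. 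Hence the right-hand side of Conjecture~\ref{conj1.5} reduces to its $(\mathrm{3D})$ part alone, and it remains to prove, for each fixed $B\in\Sym^{++}(3)$,
\[
\langle\tfrac{\DD^{\circ}}{\DD t}[B],D\rangle>0 \ \ \forall \, D \in \Sym(3) \! \setminus \! \{0\} \qquad\Longleftrightarrow\qquad \langle\tfrac{\DD^{\circ}}{\DD t}[\mathcal{E}(B)],D\rangle>0 \ \ \forall \, D \in \Sym(3) \! \setminus \! \{0\} .
\]

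For the nontrivial part I would use $\mathcal{E}(B)=\widehat{\mathcal{E}}(\log B)$ and the chain rule of Proposition~\ref{apppropa20} to write $\tfrac{\DD^{\circ}}{\DD t}[\mathcal{E}(B)]=\DD_B \mathcal{E}(B).\tfrac{\DD^{\circ}}{\DD t}[B]=\DD_{\log B}\widehat{\mathcal{E}}(\log B).\DD_B\log B.\tfrac{\DD^{\circ}}{\DD t}[B]$. Since $\DD_B \mathcal{E}(B)$ is a Fr\'echet derivative of a primary matrix function it is self-adjoint, so with $\mathbb{K}:=\DD_{\log B}\widehat{\mathcal{E}}(\log B).\DD_B\log B$ one obtains $\langle\tfrac{\DD^{\circ}}{\DD t}[\mathcal{E}(B)],D\rangle=\langle\tfrac{\DD^{\circ}}{\DD t}[B],\mathbb{K}.D\rangle=\langle\mathbb{A}^{\circ}(B).D,\mathbb{K}.D\rangle$. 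By the strong monotonicity of $e$ and of the logarithm, both factors of $\mathbb{K}$ lie in $\Sym^{++}_{4}(6)$ (this is exactly what was used around \eqref{eqinvert001}), and, being derivatives of primary matrix functions of $B$, they are diagonal in the orthogonal basis $\{e_i\otimes e_i\}_{i}\cup\{e_i\otimes e_j+e_j\otimes e_i\}_{i<j}$ of $\Sym(3)$ attached to an eigenframe $(e_i)$ of $B$, with strictly positive diagonal entries; hence the same holds for $\mathbb{K}$.

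Then I would invoke the eigenprojection representation of $\mathbb{A}^{\circ}(B)$ obtained earlier: $\mathbb{A}^{\circ}(B)$ is diagonal in the \emph{same} basis, with eigenvalue $2\lambda_i^2>0$ on $e_i\otimes e_i$ and eigenvalue $\alpha^{\circ}_{ij}:=\lambda_i^2+\lambda_j^2+\nu^{\circ}_{ij}(\lambda_i^2-\lambda_j^2)$ on $e_i\otimes e_j+e_j\otimes e_i$, with $\nu^{\circ}_{ij}$ the material spin-function of the rate. Thus $\mathbb{A}^{\circ}(B)$ and $\mathbb{K}$ are simultaneously orthogonally diagonalized; writing $a_k$ and $m_k>0$ for their respective eigenvalues and $P_k$ for the orthogonal projection onto the $k$-th basis axis, both quadratic forms split as $\langle\mathbb{A}^{\circ}(B).D,D\rangle=\sum_k a_k\norm{P_k D}^2$ and $\langle\mathbb{A}^{\circ}(B).D,\mathbb{K}.D\rangle=\sum_k a_k m_k\norm{P_k D}^2$. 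Since all $m_k>0$, each of these forms is positive definite if and only if $a_k>0$ for every $k$, i.e.\ if and only if $\alpha^{\circ}_{ij}>0$ for all $i<j$ (the diagonal eigenvalues $2\lambda_i^2$ being positive anyway); equivalently $\mathbb{A}^{\circ}(B)\in\Sym^{++}_{4}(6)$, which by Definition~\ref{appendixpcd} is precisely positivity of the corotational rate. This common characterization gives the desired equivalence, and together with the first paragraph the full statement of Conjecture~\ref{conj1.5}.

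The hard part is not this bookkeeping but the structural input it rests upon: one needs the eigenprojection representation guaranteeing that $\mathbb{A}^{\circ}(B)$ couples neither the diagonal block of $\Sym(3)$ to the off-diagonal blocks nor distinct off-diagonal blocks to one another --- equivalently, that the spin of the rate is a genuine isotropic skew tensor function of $(B,D)$ in the sense of Xiao, Bruhns and Meyers --- because this is exactly what makes $\mathbb{A}^{\circ}(B)$ coaxial with every isotropic $B$-dependent fourth order tensor, in particular with $\DD_B\log B$ and $\DD_B \mathcal{E}(B)$. A second, minor technical point is the treatment of repeated eigenvalues of $B$: there the above blocks merge, but the divided-difference eigenvalues of both $\mathbb{K}$ and $\mathbb{A}^{\circ}(B)$ extend continuously with the same strictly positive limits, so the equivalence passes to the limit (or one simply repeats the argument on the coarser eigenprojection decomposition).
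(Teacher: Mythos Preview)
Your proposal goes well beyond what the paper itself establishes. In the paper this statement is explicitly a \emph{conjecture}: immediately after it, the authors observe that the direction $\Longleftarrow$ is trivial (take $\mathcal{E}(B)=\tfrac12(B-\id)$ and use $\frac{\DD^{\circ}}{\DD t}[\id]=0$), state that the direction $\Longrightarrow$ ``remains to show'', and then prove it only in the single special case of the Zaremba--Jaumann rate, appealing to an external result (Theorem~3.1 of \cite{CSP2024}) to conclude $\langle \DD_{\log B}\widehat{\mathcal{E}}(\log B).\,\DD_B\log B.[BD+DB],D\rangle>0$. No general argument for $\Longrightarrow$ is offered.

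Your route is genuinely different and substantially stronger. You use the chain rule and self-adjointness of $\DD_B\mathcal{E}(B)$ to reduce the question to the bilinear form $\langle \mathbb{A}^{\circ}(B).D,\mathbb{K}.D\rangle$ with $\mathbb{K}=\DD_B\mathcal{E}(B)$, and then exploit the fact that for material spins both $\mathbb{A}^{\circ}(B)$ and $\mathbb{K}$ are diagonal in the eigenframe basis $\{e_i\otimes e_i\}\cup\{\tfrac{1}{\sqrt2}(e_i\otimes e_j+e_j\otimes e_i)\}$ of $\Sym(3)$. This is precisely the structure the paper derives \emph{later}, in Section~3 (equations \eqref{24} and \eqref{30}), where $\mathbb{A}^{\circ}(B)$ has eigenvalues $2\lambda_i^2$ on the diagonal block and $z_{ij}=\lambda_i^2+\lambda_j^2+g_{ij}(\lambda_i^2-\lambda_j^2)$ on the off-diagonal blocks; the corresponding Daleckii--Krein eigenvalues of $\mathbb{K}$ are $e'(\lambda_i^2)>0$ and the positive divided differences $\frac{e(\lambda_i^2)-e(\lambda_j^2)}{\lambda_i^2-\lambda_j^2}$. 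Your simultaneous-diagonalization argument then yields the equivalence at once, for \emph{every} corotational rate with material spin, not just Zaremba--Jaumann. The paper never carries out this step; it is exactly what the authors leave open.

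Two remarks on scope. First, your argument --- and you say so in the final paragraph --- needs the material-spin structure (equivalently, that $\frac{\DD^{\circ}}{\DD t}[B]$ depends linearly on $D$ through an isotropic $\mathbb{A}^{\circ}(B)$); this is the natural setting of the paper and is implicit already in Definition~\ref{appendixpcd}, so it is not a genuine restriction relative to the conjecture as stated. Second, the repeated-eigenvalue case is handled correctly by passing to the coarser eigenprojection decomposition of Section~3; the paper's formula \eqref{24} is already written in that generality, so no separate limiting argument is required.
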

\endgroup
\begin{rem}
The $\Longleftarrow \;$ direction is clear by choosing $\mathcal{E}(B) = \frac12 (B - \id)$ and noting that $\frac{\DD^{\circ}}{\DD t}[\id] = 0$ for corotational rates. It remains to show that
	\begin{align}
	\forall \, D \in \Sym(3) \! \setminus \! \{0\}: \qquad \langle \frac{\DD^{\circ}}{\DD t}[B], D \rangle > 0 \qquad \implies \qquad \langle \frac{\DD^{\circ}}{\DD t}[\mathcal{E}(B)], D \rangle = \langle \DD_B \mathcal{E}(B). \frac{\DD^{\circ}}{\DD t}[B], D \rangle > 0.
	\end{align}
\end{rem}
\begin{prop}
Conjecture \ref{conj1.5} is true for the Zaremba-Jaumann rate.
\end{prop}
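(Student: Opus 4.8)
The plan is to reduce the assertion, via the chain rule for corotational rates, to the positive definiteness of a single explicit fourth-order tensor, and then to verify that by diagonalizing in the eigenframe of $B$.

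By the Remark preceding the statement it suffices to prove the implication
\[
\langle \frac{\DD^{\ZJ}}{\DD t}[B],D\rangle > 0 \;\; \forall\, D\in\Sym(3)\setminus\{0\}
\quad\Longrightarrow\quad
\langle \frac{\DD^{\ZJ}}{\DD t}[\mathcal{E}(B)],D\rangle > 0 \;\; \forall\, D\in\Sym(3)\setminus\{0\},
\]
for every spatial strain tensor $\mathcal{E}$ with strongly monotone scale function $e$. The antecedent is in fact unconditionally valid by \eqref{eqprominentposdef}, so the real content is the consequent. Using the chain rule of Proposition~\ref{apppropa20} together with $\frac{\DD^{\ZJ}}{\DD t}[B] = BD+DB$, one has
\[
\frac{\DD^{\ZJ}}{\DD t}[\mathcal{E}(B)] = \DD_B\mathcal{E}(B).\frac{\DD^{\ZJ}}{\DD t}[B] = \DD_B\mathcal{E}(B).(BD+DB),
\]
so the claim becomes: the quadratic form $D\mapsto \langle \DD_B\mathcal{E}(B).(BD+DB),D\rangle$ is positive definite on $\Sym(3)$.

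To see this I would pass to the eigenbasis of $B$. Write $B = \sum_{i=1}^3 \mu_i\, p_i\otimes p_i$ with $\mu_i = \lambda_i^2 > 0$ and $(p_i)$ orthonormal, and set $D_{ij} := \langle D\,p_j,p_i\rangle$. In this basis $(BD+DB)_{ij} = (\mu_i+\mu_j)D_{ij}$, while the primary-matrix-function derivative $\DD_B\mathcal{E}(B)$ acts entrywise by the divided differences of $e$: it multiplies the $(i,j)$-entry by $e'(\mu_i)$ if $\mu_i=\mu_j$ and by $\frac{e(\mu_i)-e(\mu_j)}{\mu_i-\mu_j}$ otherwise. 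Hence
\[
\langle \DD_B\mathcal{E}(B).(BD+DB),D\rangle
= \sum_{i=1}^3 2\,\mu_i\,e'(\mu_i)\,D_{ii}^2 \;+\; 2\sum_{i<j} (\mu_i+\mu_j)\,\frac{e(\mu_i)-e(\mu_j)}{\mu_i-\mu_j}\,D_{ij}^2,
\]
the difference quotient being read as $e'(\mu_i)$ when $\mu_i = \mu_j$. Every coefficient is strictly positive: $\mu_i>0$ and $e'>0$ handle the diagonal, while $\mu_i+\mu_j>0$ and the strict monotonicity of $e$ (which makes all difference quotients, and their limiting values $e'(\mu_j)$, positive) handle the off-diagonal terms. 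Thus the form vanishes only at $D=0$, which is the desired consequent.

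The computation itself is routine; the point that genuinely has to be noticed — and the reason one cannot simply invoke $\DD_B\mathcal{E}(B)\in\Sym^{++}_4(6)$ (from strong monotonicity of $e$, cf.\ the discussion around \eqref{eqinvert001}) together with $\mathbb{A}^{\ZJ}(B)\in\Sym^{++}_4(6)$ (from \eqref{eqprominentposdef}) — is that a composition of two symmetric positive-definite fourth-order tensors need not be positive definite. What makes it work here is that $\DD_B\mathcal{E}(B)$ and $\mathbb{A}^{\ZJ}(B)\colon H\mapsto BH+HB$ are simultaneously diagonalized in the eigenframe of $B$, as the spectral formula above exhibits; hence they commute and their composition is again symmetric and positive definite. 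The only minor technicality is the case of repeated eigenvalues of $B$, handled in the usual way by interpreting the divided differences as derivatives on the degenerate blocks (or by a continuity argument).
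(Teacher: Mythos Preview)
Your proof is correct. The paper takes a different route: it writes $\mathcal{E}(B)=\widehat{\mathcal{E}}(\log B)$, applies the chain rule once more to factor $\DD_B\mathcal{E}(B)=\DD_{\log B}\widehat{\mathcal{E}}(\log B).\DD_B\log B$, and then invokes an external result (Theorem~3.1 of \cite{CSP2024}) to conclude directly that
\[
\langle \DD_{\log B}\widehat{\mathcal{E}}(\log B).\,\DD_B\log B.\,[B\,D+D\,B],\,D\rangle>0.
\]
Your argument is more elementary and self-contained: you go straight to the spectral representation of $\DD_B\mathcal{E}(B)$ via the Daleckii--Krein divided-difference formula and read off positivity termwise. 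What the paper's detour through $\log B$ buys is alignment with its larger programme (the TSTS-M${}^{++}$ equivalence), in which $\DD_{\log B}\widehat{\mathcal{E}}(\log B)$ is the natural object; what your approach buys is that no black-box theorem is needed, and the commutativity mechanism you single out---both $\DD_B\mathcal{E}(B)$ and $H\mapsto B\,H+H\,B$ diagonalize in the eigenframe of $B$---makes transparent precisely why the composition of two elements of $\Sym_4^{++}(6)$ stays positive definite here.
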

\begin{proof}
As seen in \eqref{eqprominentposdef}, the inequality $\langle \frac{\DD^{\ZJ}}{\DD t}[B], D \rangle > 0$ is satisfied. By monotonicity of $\widehat{\mathcal{E}}(\log B)$ in $\log B$, we have $\DD_{\log B} \widehat{\mathcal{E}}(\log B) \in \Sym^{++}_4(6)$ and thus by using Theorem 3.1 of \cite{CSP2024}, we obtain
	\begin{align}
	\langle \frac{\DD^{\ZJ}}{\DD t}[\mathcal{E}(B)], D \rangle &= \langle \underbrace{\DD_{\log B} \widehat{\mathcal{E}}(\log B)}_{\in \Sym^{++}_4(6)}. \DD_B \log B. [B \, D + D \, B], D \rangle > 0 \notag \qedhere
	\end{align}

\end{proof}
\begin{rem}[Structure preserving property]
The latter signifies that the positive corotational rates are able to reveal the monotonicity of the spatial strain tensors $\mathcal{E}(B)$ by evaluating $\langle \frac{\DD^{\circ}}{\DD t}[\mathcal{E}(B)], D \rangle > 0$.
\end{rem}

\subsubsection{Examples from the Seth-Hill family} \label{secsethhill}
As described in \cite[p.~510]{Neff_Osterbrink_Martin_Hencky13} and \cite[p.~912]{bruhns2004}, some of the most common examples of spatial strain tensors $\mathcal{E}$ used in nonlinear elasticity is the \textbf{Seth-Hill family} \cite{seth1961}
\begingroup
\renewcommand*{\arraystretch}{2.2}
	\begin{align}
	\label{eqsethhill01}
	\mathcal{E}_m(B) = \left\{
		\begin{array}{ll}
		\dd \frac{1}{2m}(B^m-1), &\text{if} \quad m \in \R \! \setminus \! \{0\} \\
		\dd \frac12 \, \log B, &\text{if} \quad m = 0,
		\end{array}
	\right.
	\end{align}
using the scale functions
	\begin{align}
	\label{eqsethhill}
	e_m(\chi) = \left\{
		\begin{array}{ll}
		\dd \frac{1}{2m}(\chi^m-1), &\text{if} \quad m \in \R \! \setminus \! \{0\} \\
		\dd \frac12 \, \log \chi, &\text{if} \quad m = 0.
		\end{array}
	\right.
	\end{align}
\endgroup
Some examples for scale functions $e$ belonging to this family are depicted in Figure \ref{figsethhill}.

	\begin{figure}[h!]
		\begin{center}		
		\begin{minipage}[h!]{0.4\linewidth}
			\centering
			\includegraphics[scale=0.25]{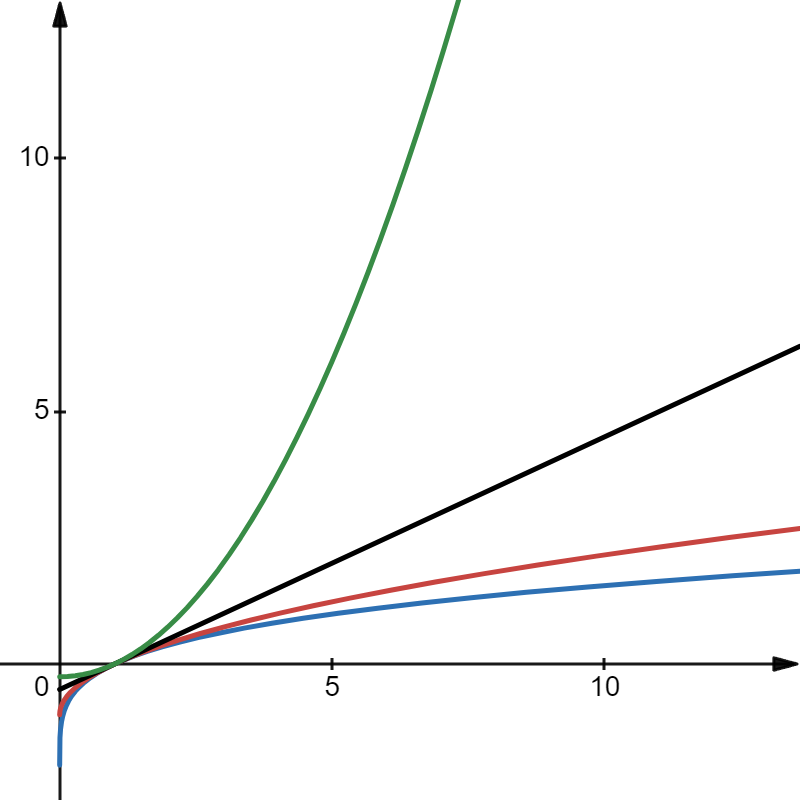}
			\put(-7,22){\footnotesize $\chi$}
			\put(-180,190){\footnotesize $e_m(\chi)$}
		\end{minipage} \qquad
		\begin{minipage}[h!]{0.4\linewidth}
			\centering
			\includegraphics[scale=0.25]{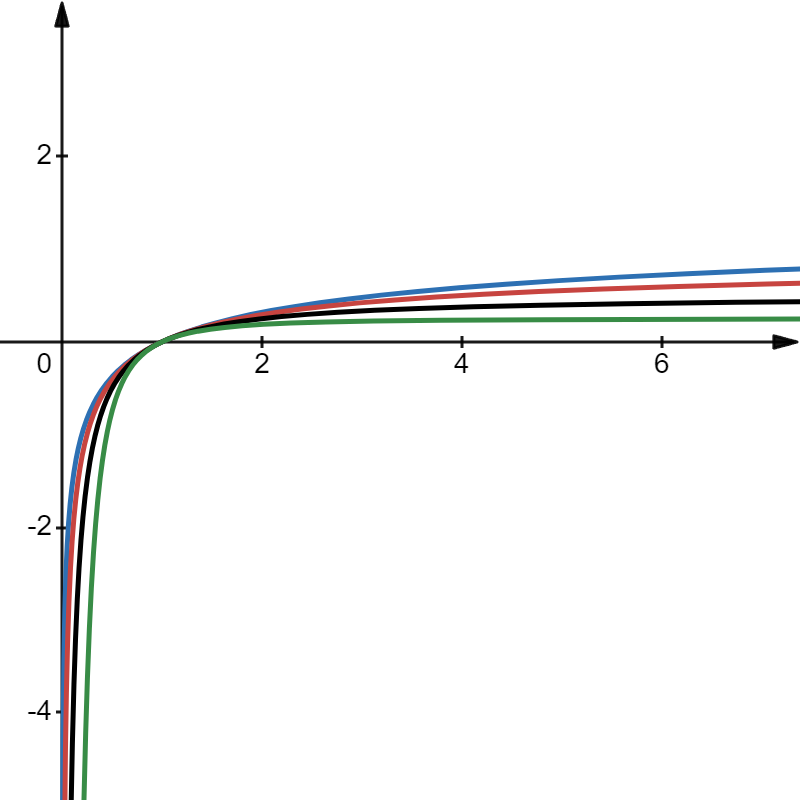}
			\put(-10,105){\footnotesize $\chi$}
			\put(-180,190){\footnotesize $e_m(\chi)$}
		\end{minipage}
		\caption{Picture of \textbf{monotone scale functions} from the Seth-Hill family \eqref{eqsethhill} (left $e_m(\chi) = \frac{1}{2m}(\chi^m-1)$ and right $e_m(\chi)= \frac{1}{2m}(1-\chi^{-m})$ for $\blue{m=\frac{1}{4}}, \red{m=\frac{1}{2}}, m=1$ and $\teal{m=2}$).}
		\label{figsethhill}
		\end{center}
	\end{figure}

\noindent Let us exemplary show positivity of some corotational rates evaluated for some spatial strain tensors $\mathcal{E}(B)$ belonging to the Seth-Hill family.
\clearpage
\begin{example}[Green-Naghdi, $m=0$ {[Hencky's logarithmic strain]}]
For $m=0$ we obtain \newline $\mathcal{E}(B) = \log B = 2 \, \log V$. In combination with the Green-Naghdi rate this leads to
	\begin{align}
	\label{eqGNdalekii}
	\langle \frac{\DD^{\GN}}{\DD t}\bigg[\underbrace{\frac12 \, \log B}_{\substack{\text{strain} \\ \text{tensor}}}\bigg], D \rangle \overset{\text{chain rule}}{=} \frac12 \, \langle \DD_B \log B. \frac{\DD^{\GN}}{\DD t}[B], D \rangle = \frac12 \, \underbrace{\langle \DD_B \log B. [V \, D \, V], D \rangle}_{\text{Daleckii-Krein\footnotemark}} > 0 \qquad \forall \, D \in \Sym(3) \! \setminus \! \{0\}.
	\end{align}
\end{example}
\footnotetext
{
With the help of the Daleckii-Krein formula \cite{daleckii1965}, it is possible to derive a semi-explicit expression for $\DD_B \log B$ given by $\DD_B \log B . H = R \, [\mathcal{F} \circ (R^T \, H \, R)] \, R^T$, where $\mathcal{F}$ is a matrix with non-negative entries, $R \in \OO(3)$ and $\circ$ denotes the Schur product \cite{horn2013} (cf.~\cite[Corollary A.24]{CSP2024}). Considering the scalar product in \eqref{eqGNdalekii}, this formula leads to
	\begin{equation}
	\begin{alignedat}{2}
	\langle \DD_B \log B.[V \, D \, V], D \rangle &= \langle R \, [\mathcal{F} \circ (R^T \, [V \, D \, V] \, R)] \, R^T, D \rangle = \langle \mathcal{F} \circ (R^T \, [V \, D \, V] \, R), R^T \, D \, R \rangle \\
	&\ge \min \, \mathcal{F} \, \langle \underbrace{R^T \, V \, R}_{\diag(\lambda_1, \lambda_2, \lambda_3)} \, \underbrace{R^T \, D \, R}_{S} \, \underbrace{R^T \, V \, R}_{\diag(\lambda_1, \lambda_2, \lambda_3)}, \underbrace{R^T \, D \, R}_{S} \rangle > 0.
	\end{alignedat}
	\end{equation}
}
\begin{example}[Zaremba-Jaumann, $m=1$ {[Finger strain]}]
	\begin{align}
	\langle \frac{\DD^{\ZJ}}{\DD t}\bigg[\underbrace{\frac12 \, (B - \id}_{\substack{\text{strain} \; \text{tensor}}})\bigg], D \rangle = \frac12 \, \langle D \, B + B \, D, D \rangle > 0, \qquad \forall \, D \in \Sym(3) \! \setminus \! \{0\}.
	\end{align}
\end{example}
\begin{example}[Zaremba-Jaumann, $m=2$] \label{exZJm2}
	\begin{align}
	\langle \frac{\DD^{\ZJ}}{\DD t}\bigg[\underbrace{\frac14 \, (B^2 - \id}_{\substack{\text{strain} \\ \text{tensor}}})\bigg], D \rangle \overset{\text{chain rule}}&{=} \frac14 \, \langle B \, H + H \, B\bigg\vert_{H = \frac{\DD^{\ZJ}}{\DD t}[B]}, D \rangle = \frac14 \, \langle B \, (B \, D + D \, B) + (B \, D + D \, B) \, B, D \rangle \notag \\
	&= \frac14 \, \langle B^2 \, D + B \, D \, B + B \, D \, B + D \, B^2, D \rangle > 0, \qquad \forall \, D \in \Sym(3) \! \setminus \! \{0\}.
	\end{align}
\end{example}
\begin{example}[Zaremba-Jaumann, $m=k$]
Similarly to the previous example we obtain for $k \in \mathbb{N}$
	\begin{equation}
	\label{eqZJmk}
	\begin{alignedat}{2}
	\langle \frac{\DD^{\ZJ}}{\DD t}\bigg[\underbrace{\frac{1}{2k} \, (B^k - \id}_{\substack{\text{strain} \\ \text{tensor}}})\bigg], D \rangle &= \frac{1}{2k} \, \langle \sum_{i=0}^k B^i \, H \, B^{k-i} \bigg\vert_{H = \frac{\DD^{\ZJ}}{\DD t}[B]}, D \rangle = \frac{1}{2k} \, \langle \sum_{i=0}^k B^i \, (B \, D + D \, B) \, B^{k-i}, D \rangle \\
	&= \frac{1}{2k} \, \langle \sum_{i=0}^k B^{i+1} \, D \, B^{k-i} + \sum_{i=0}^k B^i \, D \, B^{k-i+1}, D \rangle > 0, \qquad \forall \, D \in \Sym(3) \! \setminus \! \{0\}.
	\end{alignedat}
	\end{equation}
\end{example}
\begin{example}[Zaremba-Jaumann, $m=-1$ {[Almansi strain]}]
	\begin{align}
	\langle \frac{\DD^{\ZJ}}{\DD t}\bigg[\underbrace{\frac12 \, (\id - B^{-1}}_{\substack{\text{strain} \\ \text{tensor}}})\bigg], D \rangle &= -\frac12 \, \langle \frac{\DD^{\ZJ}}{\DD t}[B^{-1}], D \rangle \overset{\text{chain rule}}{=} -\frac12 \, \langle -B^{-1} \, \left( \frac{\DD^{\ZJ}}{\DD t}[B] \right) \, B^{-1}, D \rangle \\
	&= \frac12 \, \langle B^{-1} \, (B \, D + D \, B) \, B^{-1}, D \rangle = \langle D \, B^{-1}, D \rangle + \langle B^{-1} \, D , D \rangle > 0, \quad \forall \, D \in \Sym(3) \! \setminus \! \{0\}. \notag
	\end{align}
\end{example}
\begin{example}[Zaremba-Jaumann, $m=-k$]
For this example, first observe the relation
	\begin{equation}
	\begin{alignedat}{2}
	0 = \DD_B(\id).H = \DD_B(B^k \, B^{-k}).H &= (\DD_B B^k.H) \, B^{-k} + B^k \, (\DD_B B^{-k}.H) \\
	\iff \qquad \DD_B B^{-k}.H &= -B^{-k} \, (\DD_B B^k.H) \, B^{-k}.
	\end{alignedat}
	\end{equation}
Then we obtain similarly to the previous two examples
	\begin{align}
	\langle \frac{\DD^{\ZJ}}{\DD t}\bigg[\underbrace{\frac{1}{2k} \, (\id - B^{-k}}_{\substack{\text{strain} \\ \text{tensor}}})\bigg], D \rangle &= -\frac{1}{2k} \, \langle \frac{\DD^{\ZJ}}{\DD t}[B^{-k}], D \rangle \overset{\text{chain rule}}{=} -\frac{1}{2k} \, \langle -B^{-k} \, \left( \frac{\DD^{\ZJ}}{\DD t}[B^k] \right) \, B^{-k}, D \rangle \notag \\
	\overset{\eqref{eqZJmk}}&{=} \frac{1}{2k} \, \langle B^{-k} \, \left( \sum_{i=0}^k B^i \, (B \, D + D \, B) \, B^{k-i} \right) \, B^{-k}, D \rangle \\
	&= \frac{1}{2k} \, \langle \sum_{i=0}^k B^{i+1-k} \, D \, B^{-i} + \sum_{i=0}^k B^{i-k} \, D \, B^{-i+1}, D \rangle > 0, \qquad \forall \, D \in \Sym(3) \! \setminus \! \{0\} \, . \notag
	\end{align}
\end{example}
\section{Tangent stiffness tensors for a family of corotational rates} \label{app13}
In order to approach the problem of classification of invertible or positive corotational rates, we consider for an arbitrary corotational rate $\frac{\DD^{\circ}}{\DD t}$ the expression
	\begin{align}
	\label{eqisthatcorrect1}
	\frac{\DD^{\circ}}{\DD t}[B] = \frac{\DD}{\DD t}[B] + B \, \Omega^{\circ} - \Omega^{\circ} \, B = \mathbb{A}^{\circ}(B).D
	\end{align}
with $\mathbb{A}^{\circ}(B): \Sym(3) \to \Sym(3)$ and we need to understand the structure of the induced stiffness tensor $\mathbb{A}^{\circ}(B)$. Concerning this problem, we have from \cite{xiao98_1}:
\subsection{Representation for material spins and first properties of the tensor $\mathbb{A}^{\circ}(B)$}
\begin{thm}
Let the spin tensor $\Omega^{\circ}$ of an objective, corotational rate be associated with the deformation and rotation of a deforming material body as indicated by 
	\begin{align}
	\Omega^{\circ} = \Upsilon(B, D, W)
	\end{align}
and moreover let the tensor function $\Upsilon$ be continuous with respect to the argument $B$. Then the corotational rate of any Eulerian strain measure $e$ defined by $\Omega^{\circ}$ is objective if and only if
	\begin{align}
	\label{eqxiao1}
	\Omega^{\circ} = W + \widetilde \Upsilon(B,D),
	\end{align}
where $\widetilde \Upsilon$ is an isotropic skewsymmetric tensor-valued function of $B$ and $D$ that is continuous with respect to the argument $B$.
\end{thm}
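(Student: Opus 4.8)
The plan is to reduce the objectivity requirement to a transformation rule for the spin $\Omega^{\circ}$ under superposed rigid motions $F \mapsto Q(t)\,F(t)$, and then to exploit the fact that at a fixed instant $t_0$ one may prescribe $Q(t_0)$ and $\dot Q(t_0)$ independently of one another.

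\emph{Step 1 (reduction to the spin rule \eqref{eqboxtrafo}).} It suffices to test the corotational rate on a single faithful Eulerian strain measure, say $\mathcal{E}(B) = \log B$, whose scale function $\chi \mapsto \log\chi$ is strongly monotone, hence injective on $\R^+$. Under a Euclidean transformation $F \mapsto Q(t)\,F(t)$ with $Q \in \OO(3)$ one has $B \mapsto QBQ^T$, $D \mapsto QDQ^T$, $W \mapsto \dot Q Q^T + QWQ^T$ and $\mathcal{E}(B) \mapsto Q\,\mathcal{E}(B)\,Q^T$. Writing $\frac{\DD^{\circ}}{\DD t}[\mathcal{E}(B)] = \frac{\DD}{\DD t}[\mathcal{E}(B)] - \Omega^{\circ}\,\mathcal{E}(B) + \mathcal{E}(B)\,\Omega^{\circ}$ in the rotated frame and using $\dot Q = (\dot Q Q^T)\,Q$ with $\dot Q Q^T$ skew, the objectivity condition \eqref{eqintro002} collapses to the commutator identity $[\,\Omega^{\circ *} - \dot Q Q^T - Q\Omega^{\circ}Q^T,\ Q\,\mathcal{E}(B)\,Q^T\,] = 0$, where $\Omega^{\circ *}$ is the spin in the rotated frame. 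For $B$ with distinct eigenvalues, $Q\,\mathcal{E}(B)\,Q^T$ has distinct eigenvalues as well (strict monotonicity of $\log$), so any skew tensor commuting with it must vanish; this gives $\Omega^{\circ *} = \dot Q Q^T + Q\Omega^{\circ}Q^T$ for such $B$, and by the assumed continuity of $\Upsilon$ in $B$ the identity extends to all $B$. This is exactly \eqref{eqboxtrafo}.

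\emph{Step 2 ($\Leftarrow$).} If $\Omega^{\circ} = W + \widetilde\Upsilon(B,D)$ with $\widetilde\Upsilon$ isotropic and skew-symmetric, then under the above transformation $\Omega^{\circ *} = \big(\dot Q Q^T + QWQ^T\big) + \widetilde\Upsilon(QBQ^T, QDQ^T) = \dot Q Q^T + Q\big(W + \widetilde\Upsilon(B,D)\big)Q^T = \dot Q Q^T + Q\Omega^{\circ}Q^T$, i.e.\ \eqref{eqboxtrafo} holds, so by Step 1 (read in reverse) the corotational rate of every Eulerian strain measure built from this $\Omega^{\circ}$ is objective; skew-symmetry of $\Omega^{\circ}$ is immediate.

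\emph{Step 3 ($\Rightarrow$, the core).} By Step 1, $\Upsilon$ satisfies $\Upsilon(QBQ^T, QDQ^T, \dot Q Q^T + QWQ^T) = \dot Q Q^T + Q\,\Upsilon(B,D,W)\,Q^T$ for every smooth $Q\colon \R \to \OO(3)$. Put $\widehat\Upsilon := \Upsilon - W$, which is skew-symmetric; subtracting $\dot Q Q^T + QWQ^T$ from both sides yields the cleaner rule $\widehat\Upsilon(QBQ^T, QDQ^T, \dot Q Q^T + QWQ^T) = Q\,\widehat\Upsilon(B,D,W)\,Q^T$. Now fix $t_0$ and choose $Q(t) = \exp\big((t-t_0)\,\Omega\big)$ for an arbitrary fixed $\Omega \in \mathfrak{so}(3)$, so that $Q(t_0) = \id$ and $\dot Q(t_0) = \Omega$; evaluating at $t_0$ gives $\widehat\Upsilon(B, D, W + \Omega) = \widehat\Upsilon(B,D,W)$ for all skew $\Omega$, i.e.\ $\widehat\Upsilon$ is independent of its third argument. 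Denote it $\widetilde\Upsilon(B,D)$. Re-inserting into the transformation rule with a time-independent $Q$ ($\dot Q \equiv 0$) gives $\widetilde\Upsilon(QBQ^T, QDQ^T) = Q\,\widetilde\Upsilon(B,D)\,Q^T$, so $\widetilde\Upsilon$ is an isotropic, skew-symmetric tensor function, continuous in $B$ because $\Upsilon$ is. Hence $\Omega^{\circ} = \Upsilon(B,D,W) = W + \widetilde\Upsilon(B,D)$. The main obstacle lies in the two non-algebraic points: (i) the passage from the commutator identity in Step 1 to the pointwise spin rule, which genuinely needs the distinct-eigenvalue case together with the continuity hypothesis on $\Upsilon$ to cover coincident eigenvalues — precisely why that hypothesis is imposed; and (ii) the decoupling choice $Q(t) = \exp((t-t_0)\Omega)$ in Step 3, which makes $Q(t_0)$ and $\dot Q(t_0)$ independent and is what forces the vorticity to enter $\Omega^{\circ}$ with coefficient exactly one and with no further dependence on $W$.
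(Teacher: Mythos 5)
The paper itself does not present a proof of this theorem; it only refers to Theorem~4 of Xiao, Bruhns and Meyers \cite{xiao98_1}. Your argument is correct and reconstructs essentially the standard route used there: reduce objectivity of the corotational rate of a faithful strain measure to the spin transformation rule \eqref{eqboxtrafo} via a commutator identity (using distinct eigenvalues and the continuity hypothesis to pass to coincident ones), and then extract $\Omega^{\circ}=W+\widetilde\Upsilon(B,D)$ by exploiting that $Q(t_0)$ and $\dot Q(t_0)$ can be prescribed independently, which at once kills the residual dependence on $W$ and forces its coefficient to be one, with isotropy of $\widetilde\Upsilon$ coming from the time-independent $Q$. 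The only minor imprecision is the phrase ``by Step~1 read in reverse'' in Step~2: Step~1 as literally written concerns $\log B$, but the same commutator algebra applies verbatim to every Eulerian strain measure, so once \eqref{eqboxtrafo} holds the commutator vanishes for all $\mathcal{E}$ and objectivity follows in full generality -- you may want to state that one line explicitly.
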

\begin{proof}
See proof of Theorem 4 in \cite[p.~22]{xiao98_1}.
\end{proof}
\begin{rem}
The objectivity is easy to see. Since $W \mapsto \dot Q \, Q^T + Q \, W \, Q^T$ and $\widetilde \Upsilon(B,D) \mapsto Q \, \widetilde \Upsilon(B,D) \, Q^T$, we have altogether $\Omega^{\circ} \mapsto \dot Q \, Q^T + Q \, \Omega^{\circ} \, Q^T$.
\end{rem}
\begin{definition}[\textbf{Material spins}] \label{appmaterialspins}
Xiao et al.~\cite[p.25]{xiao98_1} continue to define a physically reasonably large subclass of spin tensors for \textbf{objective corotational rates} which includes all known corotational rates, the so-called \textbf{material spins}, by requiring $\widetilde \Upsilon(B,D)$ to be \textbf{linear in $D$} and \textbf{isotropic in $B$ and $D$} as well as satisfying the \textbf{homogeneity condition}\footnote
{
Motivated by the fact that the spins $W$ and $\Omega^R$ do not change under a uniform dilation $F \mapsto \alpha \, F$.
}
$\widetilde \Upsilon(\alpha \, B, D) = \widetilde \Upsilon(B, D)$ for all $\alpha > 0$. Standard representation theorems for isotropic, skew-symmetric tensor functions then yield the format (see also \cite{ghavam2007, korobeynikov2011})
	\begin{align}
	\label{eqsecondhalf}
	\Omega^{\circ} = W + \widetilde \Upsilon(B,D) = W + \nu_1 \, \sk(B \, D) + \nu_2 \, \sk(B^2 \, D) + \nu_3 \, \sk(B^2 \, D \, B),
	\end{align}
where each coefficient $\nu_k$ is an isotropic invariant of $B$.
\end{definition}
\noindent Recalling the identity $\frac{\DD}{\DD t}[B] = L \, B + B \, L^T = D \, B + B \, D + W \, B - B \, W$ and reconsidering \eqref{eqisthatcorrect1} then gives
	\begin{equation}
	\label{eqAAb}
	\begin{alignedat}{2}
	\frac{\DD^{\circ}}{\DD t}[B] &= \frac{\DD}{\DD t}[B] + B \, \Omega^{\circ} - \Omega^{\circ} \, B \\
	\overset{\eqref{eqsecondhalf}}&{=}\frac{\DD}{\DD t}[B] + B \, (W + \nu_1 \, \sk(B \, D) + \nu_2 \, \sk(B^2 \, D) + \nu_3 \, \sk(B^2 \, D \, B)) \\
	& \qquad \qquad \quad \! - (W + \nu_1 \, \sk(B \, D) + \nu_2 \, \sk(B^2 \, D) + \nu_3 \, \sk(B^2 \, D \, B)) \, B \\
	&= \frac{\DD}{\DD t}[B] + B \, W - W \, B + \nu_1 \, (B \, \sk(B \, D) - \sk(B \, D) \, B) \\
	& \qquad \qquad \!+ \nu_2 \, (B \, \sk(B^2 \, D) - \sk(B^2 \, D) \, B) + \nu_3 \, (B^2 \, \sk(B \, D \, B) - \sk(B^2 \, D \, B) \, B) \\
	&= D \, B + B \, D + \nu_1 \, (B \, \sk(B \, D) - \sk(B \, D) \, B) \\
	&\qquad +\nu_2 \, (B \, \sk(B^2 \, D) - \sk(B^2 \, D) \, B) + \nu_3 \, (B \, \sk(B^2 \, D \, B) - \sk(B^2 \, D \, B) \, B) \\
	&=: \mathbb{A}^{\circ}(B).D \, .
	\end{alignedat}
	\end{equation}
Thus, in principle, we have an explicit representation for $\mathbb{A}^{\circ}(B)$ at our disposal. \\
\\
Observe, that for $B, D \in \Sym(3)$, we have the identities
	\begin{equation}
	\label{eqA230}
	\begin{alignedat}{2}
	\sk(B \, D) &= \frac12 \, (B \, D - D \, B) = \frac12 \, [B,D], \\
	\sk(B^2 \, D) &= \frac12 \, (B^2 \, D - D \, B^2) = \frac12 [B \, (B \, D - D \, B) + (B \, D - D \, B) \, B] = \frac12 [B \, [B,D] + [B,D] \, B], \\
	\sk(B^2 \, D \, B) &= \frac12 \, (B^2 \, D \, B - B \, D \, B^2) = \frac12 \, B \, (B \, D - D \, B) \, B = \frac12 \, B \, [B,D] \, B,
	\end{alignedat}
	\end{equation}
so that we may reformulate \eqref{eqAAb} in terms of the commutator bracket $[B,D]$ as
	\begin{equation}
	\label{eqrepresA}
	\begin{alignedat}{2}
	\mathbb{A}^{\circ}(B).D = D \, B + B \, D &+ \frac{\nu_1}{2} \, (B \, [B,D] - [B,D] \, B) \\
	&+ \frac{\nu_2}{2} \, (B^2 \, [B,D] - [B,D] \, B^2) + \frac{\nu_3}{2} \, (B^2 \, [B,D] \, B - B \, [B,D] \, B^2) \, .
	\end{alignedat}
	\end{equation}
For example, the choice $\nu_1 = \nu_2 = \nu_3 = 0$ leads back to the known Zaremba-Jaumann derivative
	\begin{align}
	\frac{\DD^{\ZJ}}{\DD t}[B] = B \, D + D \, B \, .
	\end{align}
Since
	\begin{align}
	\label{eqminorsymA}
	(B \, A - A \, B)^T = A^T \, B^T - B^T \, A^T = -A \, B + B \, A \qquad \text{for} \quad B \in \Sym^{++}(3), \; A \in \mathfrak{so}(3)
	\end{align}
this shows that $\mathbb{A}^{\circ}(B)$ is a fourth order linear operator with $\mathbb{A}^{\circ}(B): \Sym(3) \to \Sym(3)$ (minor symmetry).

By application of the chain rule (cf.~Proposition \ref{apppropa20}), we obtain a semi-explicit representation for the induced tangent stiffness tensor for the class of corotational rates with material spins \eqref{eqsecondhalf}
	\begin{align}
	\label{eqapp227}
	\H^{\circ}(\sigma).D = \frac{\DD^{\circ}}{\DD t}[\sigma] = \DD_{\log B} \widehat \sigma(\log B). \DD_B \log B. \mathbb{A}^{\circ}(B).D = \DD_B \sigma(B).\mathbb{A}^{\circ}(B).D
	\end{align}
with $\mathbb{A}^{\circ}(B)$ given in \eqref{eqAAb}. This identity extends the result in Norris \cite[Lemma 1]{Norris2008} from primary matrix functions to arbitrary isotropic tensor functions $B \mapsto \sigma(B)$.\footnote
{
Norris' result, \cite[Lemma 1]{Norris2008} is expressed as $\frac{\DD^{\circ}}{\DD t}[\widetilde \sigma(V)] = \DD_V \widetilde \sigma(V). \frac{\DD^{\circ}}{\DD t}[V] = \DD_V \widetilde \sigma(V).\mathbb{Q}(V).D$ for a primary matrix function $\widetilde \sigma$. He is also requiring \cite[Sec.~5]{Norris2008} that $\mathbb{Q}(V) \cong \mathbb{A}^{\circ}(B)$ is invertible.
}
\noindent Additionally, the tensor $\mathbb{A}^{\circ}(B)$ is \textbf{major symmetric} as shown in the next
\begin{prop}[Major symmetry of $\mathbb{A}^{\circ}(B)$] \label{propmajorsymA}
The fourth order tensor $\mathbb{A}^{\circ}(B)$ given by
	\begin{equation}
	\begin{alignedat}{2}
	\mathbb{A}^{\circ}(B).D = D \, B + B \, D &+ \frac{\nu_1}{2} \, (B \, [B,D] - [B,D] \, B) \\
	&+ \frac{\nu_2}{2} \, (B^2 \, [B,D] - [B,D] \, B^2) + \frac{\nu_3}{2} \, (B^2 \, [B,D] \, B - B \, [B,D] \, B^2)
	\end{alignedat}
	\end{equation}
is major symmetric (self-adjoint).
\end{prop}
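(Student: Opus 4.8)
The plan is to verify major symmetry term by term, showing that $\langle \mathbb{A}^{\circ}(B).D_1, D_2 \rangle = \langle D_1, \mathbb{A}^{\circ}(B).D_2 \rangle$ for all $D_1, D_2 \in \Sym(3)$. Since the operator is a linear combination of four building blocks (the $\ZJ$-term $D \mapsto DB + BD$ and the three $\nu_k$-terms), it suffices to check each block separately. The $\ZJ$-term is manifestly self-adjoint: $\langle D_1 B + B D_1, D_2 \rangle = \tr(D_1 B D_2) + \tr(B D_1 D_2) = \langle D_1, D_2 B + B D_2 \rangle$ using symmetry of all factors and cyclicity of the trace. For the remaining three terms I would first observe, via the identity \eqref{eqgeneralomsig} of Remark \ref{theoneremark} applied with $\sigma(B) = B$ — or more directly by inspection — that each term has the structure $[P_k(B), \sk(Q_k(B,D))]$ where $P_k(B)$ is a polynomial in $B$ and $\sk(Q_k(B,D))$ is the relevant skew part; equivalently using \eqref{eqA230}, each $\nu_k$-term is a sum of expressions of the form $B^a\,[B,D]\,B^b - B^b\,[B,D]\,B^a$ with $a,b \in \{0,1,2\}$.

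The key computational lemma I would isolate is the following: for any nonnegative integers $a,b$ and any $D_1, D_2 \in \Sym(3)$,
\[
\langle B^a\,[B,D_1]\,B^b - B^b\,[B,D_1]\,B^a,\ D_2 \rangle \;=\; \langle D_1,\ B^a\,[B,D_2]\,B^b - B^b\,[B,D_2]\,B^a \rangle.
\]
To prove this, expand $[B,D_1] = BD_1 - D_1 B$, distribute, and use cyclicity of the trace together with the symmetry $B^T = B$, $D_i^T = D_i$ to move factors around; the antisymmetrization in $(a,b)$ is exactly what makes the "cross terms" cancel and the surviving terms match up after relabeling. Concretely $\langle B^a B D_1 B^b, D_2\rangle = \tr(B^{a+1} D_1 B^b D_2) = \tr(D_1 B^b D_2 B^{a+1}) = \langle D_1, B^b D_2 B^{a+1}\rangle$, and tracking all four products from the two commutators and both antisymmetrized pieces shows the two sides agree. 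Applying this lemma with $(a,b) = (1,0)$ for the $\nu_1$-term, $(a,b) = (2,0)$ for the $\nu_2$-term, and $(a,b) = (2,1)$ for the $\nu_3$-term (reading off the exponents from \eqref{eqrepresA}) yields self-adjointness of each piece, hence of $\mathbb{A}^{\circ}(B)$.

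The only mildly delicate point — and the main obstacle, such as it is — is bookkeeping: making sure that the antisymmetrized form $B^a[B,D]B^b - B^b[B,D]B^a$ correctly matches the literal terms in \eqref{eqrepresA} (in particular that the $\nu_3$-term $B^2[B,D]B - B[B,D]B^2$ is the $(a,b)=(2,1)$ instance, not some rearrangement), and that no sign is dropped when transposing. An alternative, slicker route that avoids the index gymnastics: note that by the chain rule \eqref{eqapp227} one has $\H^{\circ}(\sigma).D = \DD_B\sigma(B).\mathbb{A}^{\circ}(B).D$, and for the strain measure $\sigma = \tfrac12\log B$ the logarithmic-type rates show $\DD_B\log B$ is major symmetric; but since we need the statement for general $\nu_k$ (not just the log rate), the direct term-by-term verification above is the cleanest self-contained argument, and I would present that.
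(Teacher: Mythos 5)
Your proof is correct and, at the strategic level, follows the same term-by-term route as the paper: the $\ZJ$-block is handled identically. Where you differ is in how the three $\nu_k$-blocks are dispatched. The paper proceeds case by case, rewriting the $\nu_1$-term as $\langle [B,D_1],[B,D_2]\rangle$, handling the $\nu_2$-term via the double-commutator identity $[B,[B^2,D]] = [B^2,[B,D]]$, and reducing the $\nu_3$-term to the $\nu_1$-case through the substitution $Q := BD_2B$ together with the identity $[B, BDB] = B[B,D]B$. You instead package all three in a single parametrized lemma,
\begin{equation*}
\langle B^a\,[B,D_1]\,B^b - B^b\,[B,D_1]\,B^a,\ D_2 \rangle \;=\; \langle D_1,\ B^a\,[B,D_2]\,B^b - B^b\,[B,D_2]\,B^a \rangle,
\end{equation*}
which is indeed correct: expanding $[B,D_1] = BD_1 - D_1B$ produces four trace terms on each side, and after cyclic rearrangement the four terms on the left coincide with the four on the right read in reverse order — the antisymmetrization in $(a,b)$ is exactly what makes the alignment happen (a single $B^a[B,D]B^b$ would not be self-adjoint). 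Your exponent identification $(a,b)=(1,0),(2,0),(2,1)$ for the $\nu_1,\nu_2,\nu_3$ pieces of \eqref{eqrepresA} is also right. This unified lemma is arguably cleaner than the paper's three ad-hoc identities and would extend automatically to material spins containing higher-degree terms $\nu\,\sk(B^aDB^b)$, which the paper's case-by-case treatment would have to redo from scratch. One minor notational slip worth fixing: in your illustrative calculation you end with $\tr(D_1B^bD_2B^{a+1}) = \langle D_1, B^bD_2B^{a+1}\rangle$, but $B^bD_2B^{a+1}$ is not symmetric, and the Frobenius inner product transposes the second argument, giving $\tr(D_1 B^{a+1}D_2B^b)$ instead. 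The mismatch disappears once all four terms and the antisymmetrization are tracked, so the lemma is unaffected; just keep the trace expression as a trace until both slots are genuinely symmetric.
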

\begin{proof}
We show major symmetry of the four parts of $\mathbb{A}^{\circ}(B)$. Obviously, we have
	\begin{align}
	\langle D_1 \, B + B \, D_1, D_2 \rangle = \langle D_1, D_2 \, B \rangle + \langle D_1, B \, D_2 \rangle = \langle D_2 \, B + B \, D_2, D_1 \rangle.
	\end{align}
For the $\nu_1$-part it follows
	\begin{align}
	\label{eqnu1}
	\langle B \, [B,D_1] - [B, D_1] \, B, D_2 \rangle = \langle [B, D_1], [B, D_2] \rangle = \langle [B, D_2], [B, D_1] \rangle = \langle B \, [B, D_2] - [B, D_2] \, B, D_1 \rangle
	\end{align}
and for the $\nu_2$-part we have
	\begin{equation}
	\begin{alignedat}{2}
	\langle [B^2, [B,D_1]], D_2 \rangle &= \langle B^2 \, [B, D_1] - [B, D_1] \, B^2, D_2 \rangle = \langle [B, D_1], [B^2, D_2] \rangle \\
	&= \langle D_1, [B,[B^2, D_2]] \rangle \overset{!}{=} \langle [B^2, [B, D_2]], D_1 \rangle \, ,
	\end{alignedat}
	\end{equation}
so that we need to have $[B,[B^2,D]] = [B^2,[B,D]]$. However, this is true, since
	\begin{align}
	[B^2,[B,D]] = B^3 \, D - B^2 \, D \, B - B \, D \, B^2 + D \, B^3 = [B,[B^2,D]] \, .
	\end{align}
What remains is the $\nu_3$-part given by
	\begin{align}
	\langle B^2 \, [B, D_1] \, B - B \, [B,D_1] \, B^2, D_2 \rangle = \langle B \, [B, D_1] - [B, D_1] \, B, \underbrace{B \, D_2 \, B}_{=: \, Q} \rangle = \langle B \, [B, D_1] - [B, D_1] \, B, Q \rangle \, ,
	\end{align}
but at this point we can use \eqref{eqnu1} to obtain
	\begin{align}
	\langle B \, [B, D_1] - [B, D_1] \, B, Q \rangle = \langle B \, [B, Q] - [B, Q] \, B, D_1 \rangle = \langle B \, [B, B \, D_2 \, B] - [B, B \, D_2 \, B] \, B, D_1 \rangle
	\end{align}
as well as $[B, B \, D \, B] = B^2 \, D \, B - B \, D \, B^2 = B \, [B,D] \, B$, yielding
	\begin{align}
	\langle B \, [B, B \, D_2 \, B] - [B, B \, D_2 \, B] \, B, D_1 \rangle = \langle B^2 \, [B, D_2] \, B - B \, [B, D_2] \, B^2, D_1 \rangle \, ,
	\end{align}
which completes the proof.
\end{proof}
\subsection{Positive definiteness of $\mathbb{A}^{\circ}(B)$}
For $\frac{\DD^{\ZJ}}{\DD t}[B]$ and $\frac{\DD^{\GN}}{\DD t}[B]$ we have, respectively
	\begin{align}
	\label{eqBforZJGN}
	\frac{\DD^{\ZJ}}{\DD t}[B] = B \, D + D \, B =: \mathbb{A}^{\ZJ}(B).D \qquad \text{and} \qquad \frac{\DD^{\GN}}{\DD t}[B] = 2 \, V \, D \, V = 2 \, \sqrt{B} \, D \, \sqrt{B} =: \mathbb{A}^{\GN}(B).D \, ,
	\end{align}
so that in both cases $\mathbb{A}^{\circ}(B)$ is invertible and indeed positive definite since ($\lambda_{\min}(B) = \lambda_{\min}^2(V)$)
	\begin{align}
	\langle B \, D + D \, B, D \rangle = 2 \, \langle B \, D, D \rangle \ge 2 \, \lambda_{\min}(B) \, \norm{D}^2 \qquad \text{and} \qquad \langle 2 \, V \, D \, V, D \rangle \ge 2 \, \lambda^2_{\min}(V) \, \norm{D}^2 \, .
	\end{align}
Furthermore, for $\frac{\DD^{\log}}{\DD t}[B] = \mathbb{A}^{\log}(B).D$ we have
	\begin{equation}
	\begin{alignedat}{2}
	\DD_{\log B} \widehat \sigma(\log B). \DD_B \log B. \mathbb{A}^{\log}(B).D \overset{\eqref{eqapp227}}&{=} \DD_{\log B} \widehat \sigma(\log B). \frac{\DD^{\log}}{\DD t}[\log B] = 2 \, \DD_{\log B} \widehat \sigma(\log B).D \\
	 \overset{\widehat \sigma(\log B) = \log B}{\iff} \quad \DD_B \log B. \mathbb{A}^{\log}(B) &= 2 \, \id \quad \iff \quad \mathbb{A}^{\log}(B) := 2 \, [\DD_B \log B]^{-1} \in \Sym^{++}_4(6),	
	\end{alignedat}
	\end{equation}
so that $\mathbb{A}^{\log}$ is also positive definite (cf.~\cite[Appendix]{CSP2024}). These examples underline again that it is physically reasonable to require that $\mathbb{A}^{\circ}(B)$ should always be a positive definite fourth order tensor for any suitable corotational rate with material spin \eqref{eqsecondhalf}: we need to require for any corotational rate with material spin in addition
	\begin{align}
	\langle \frac{\DD^{\circ}}{\DD t}[B], D \rangle = \langle \mathbb{A}^{\circ}(B).D, D \rangle > 0 \qquad \forall \, D \in \Sym(3) \! \setminus \! \{0\} \qquad \iff \qquad \mathbb{A}^{\circ}(B) \in \Sym^{++}_4(6).
	\end{align}
Checking the possible positive definiteness of $\mathbb{A}^{\circ}(B)$ leads to the evaluation of the expression $\langle \mathbb{A}^{\circ}(B).D, D \rangle$, consisting of the components
	\begin{equation}
	\label{eqA238}
	\begin{alignedat}{2}
	\langle D \, B + B \, D , D \rangle &= 2 \, \langle B \, D, D \rangle \ge 2 \, \lambda_{\min}(B) \, \norm{D}^2, \\
	\langle B \, \sk(B \, D) - \sk(B \, D) \, B, D \rangle &= \langle \sk(B \, D), B \, D - D \, B \rangle \\
	&= \frac12 \, \langle B \, D - D \, B, B \, D - D \, B \rangle = \frac12 \, \norm{[B,D]}^2 \ge 0, \\
	\langle B \, \sk(B^2 \, D) - \sk(B^2 \, D) \, B, D \rangle &= \langle \sk(B^2 \, D), B \, D - D \, B \rangle \\
	&= \frac12 \, \langle B^2 \, D - D \, B^2, B \, D - D \, B \rangle \\
	&= \langle B^2 \, D, B \, D - D \, B \rangle = \langle B^2 \, D, [B, D] \rangle, \\
	\langle B \, \sk(B^2 \, D \, B) - \sk(B^2 \, D \, B) \, B, D \rangle &= \langle \sk(B^2 \, D \, B), B \, D - D \, B \rangle \\
	&= \frac12 \, \langle B^2 \, D \, B - B \, D \, B^2, B \, D - D \, B \rangle \\
	&= \langle B^2 \, D \, B, B \, D - D \, B \rangle = \langle B^2 \, D \, B, [B, D] \rangle \, .
	\end{alignedat}
	\end{equation}
Combining these components yields preliminary
	\begin{align}
	\label{eqposdefAA}
	\langle \mathbb{A}^{\circ}(B).D, D \rangle = 2 \, \langle B \, D, D \rangle + \frac{\nu_1}{2} \, \norm{[B,D]}^2 + \nu_2 \, \langle B^2 \, D, [B,D] \rangle + \nu_3 \, \langle B^2 \, D \, B, [B,D] \rangle \, ,
	\end{align}
which e.g.~becomes positive, if $\nu_1 \ge 0$ and $\nu_2 = \nu_3 = 0$. \\
\\
For the following investigation, due to isotropy, we can assume that $B = \diag(\lambda_1^2, \lambda_2^2, \lambda_3^2) = b \in \R^{3 \times 3}$ is diagonal with $\lambda_1, \lambda_2, \lambda_3 > 0$. This is true since for $B = Q^T \, b \, Q$ we have
	\begin{equation}
	\label{eqappAA1}
	\begin{alignedat}{2}
	\langle B^2 \, D, [B,D] \rangle &= \langle Q^T \, b^2 \, Q \, D, Q^T \, b \, Q \, D - D \, Q^T \, b \, Q \rangle \\
	&= \langle b^2 \, \underbrace{Q \, D \, Q^T}_{\overline D}, b \, \underbrace{Q \, D \, Q^T}_{\overline D} - \underbrace{Q \, D \, Q^T}_{\overline D} \, b \rangle = \langle b^2 \, \overline{D}, [b, \overline{D}] \rangle \, ,
	\end{alignedat}
	\end{equation}
as well as
	\begin{equation}
	\label{eqappAA2}
	\begin{alignedat}{2}
	\langle B^2 \, D \, B, [B,D] \rangle &= \langle Q^T \, b^2 \, Q \, D \, Q^T \, b \, Q, Q^T \, b \, Q \, D - D \, Q^T \, b \, Q \rangle \\
	&= \langle b^2 \, \underbrace{Q \, D \, Q^T}_{\overline D} b, b \, \underbrace{Q \, D \, Q^T}_{\overline D} - \underbrace{Q \, D \, Q^T}_{\overline D} \, b \rangle = \langle b^2 \, \overline{D} \, b, [b, \overline{D}] \rangle \, .
	\end{alignedat}
	\end{equation}
We proceed below by showing that both expressions \eqref{eqappAA1} and \eqref{eqappAA2} are non-negative for all $b = \diag(\lambda_1^2, \lambda_2^2, \lambda_3^2)$ and $\overline{D} \in \Sym(3)$. Setting $(\overline{D})_{ij} = d_{ij}$, $i,j = 1,2,3$ 
we obtain
	\begin{equation}
	\begin{alignedat}{2}
	\langle b^2 \, \overline{D}, [b, \overline{D}] \rangle
	&= d_{12}^2 \left(\lambda _1^2+\lambda _2^2\right) \left(\lambda _1^2-\lambda _2^2\right)^2 + d_{13}^2 \left(\lambda _1^2-\lambda _3^2\right)^2 \left(\lambda _1^2+\lambda _3^2\right) +d_{23}^2 \left(\lambda _2^2-\lambda _3^2\right)^2 \left(\lambda _2^2+\lambda _3^2\right) \geq 0, \\
	\langle b^2 \, \overline{D} \, b, [b, \overline{D}] \rangle
	&= d_{12}^2 \lambda _1^2 \lambda _2^2 \left(\lambda _1^2-\lambda _2^2\right)^2 + \lambda _3^2 \left(d_{23}^2 \lambda _2^2 \left(\lambda _2^2-\lambda _3^2\right)^2 + d_{13}^2 \left(\lambda _1^3-\lambda _1 \lambda _3^2\right)^2\right) \geq 0.
	\end{alignedat}
	\end{equation}
Since by $\eqref{eqA238}_1$ we also have $2 \, \langle B \, D, D \rangle \ge 2 \, \lambda_{\min}(B) \, \norm{D}^2$, it follows
\begin{lem} \label{lemA.37}
Let $\nu_1, \nu_2, \nu_3 \ge 0$. Then $\mathbb{A}^{\circ}(B)$ given by
	\begin{equation}
	\label{eqdisplayA}
	\begin{alignedat}{2}
	\mathbb{A}^{\circ}(B).D = D \, B + B \, D &+ \frac{\nu_1}{2} \, (B \, [B,D] - [B,D] \, B) \\
	&+ \frac{\nu_2}{2} \, (B^2 \, [B,D] - [B,D] \, B^2) + \frac{\nu_3}{2} \, (B^2 \, [B,D] \, B - B \, [B,D] \, B^2)
	\end{alignedat}
	\end{equation}
is positive definite, i.e.
	\begin{equation}
	\begin{alignedat}{2}
	\langle \mathbb{A}^{\circ}(B).D, D \rangle = 2 \, \langle B \, D, D \rangle &+ \frac{\nu_1}{2} \, \norm{[B,D]}^2 + \nu_2 \, \langle B^2 \, D, [B,D] \rangle \\
	&+ \nu_3 \, \langle B^2 \, D \, B, [B,D] \rangle > 0 \qquad \forall \, D \in \Sym(3) \setminus \{0\} \\
	&\implies \qquad \mathbb{A}^{\circ}(B) \in \Sym^{++}_4(6) \, .
	\end{alignedat}
	\end{equation}
\end{lem}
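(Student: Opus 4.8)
The plan is to read off the result directly from the decomposition \eqref{eqposdefAA} that has already been established, namely
\[
\langle \mathbb{A}^{\circ}(B).D, D \rangle = 2 \, \langle B \, D, D \rangle + \frac{\nu_1}{2} \, \norm{[B,D]}^2 + \nu_2 \, \langle B^2 \, D, [B,D] \rangle + \nu_3 \, \langle B^2 \, D \, B, [B,D] \rangle \, ,
\]
and to argue that under the hypothesis $\nu_1, \nu_2, \nu_3 \ge 0$ every one of the four summands on the right-hand side is non-negative, with the first one strictly positive whenever $D \neq 0$. Summing then yields the strict lower bound $\langle \mathbb{A}^{\circ}(B).D, D \rangle \ge 2 \, \lambda_{\min}(B) \, \norm{D}^2 > 0$ for all $D \in \Sym(3) \setminus \{0\}$, which together with minor symmetry \eqref{eqminorsymA} and major symmetry (Proposition \ref{propmajorsymA}) gives $\mathbb{A}^{\circ}(B) \in \Sym^{++}_4(6)$.

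The first term is handled by positive definiteness of $B$: since $B \in \Sym^{++}(3)$ we have $\langle B \, D, D \rangle \ge \lambda_{\min}(B) \, \norm{D}^2 > 0$ for $D \neq 0$, as recorded in $\eqref{eqA238}_1$. The $\nu_1$-term is trivially non-negative because it equals $\tfrac{\nu_1}{2} \norm{[B,D]}^2$ with $\nu_1 \ge 0$. The $\nu_2$- and $\nu_3$-terms are the only ones requiring actual work: here one invokes the isotropy computations \eqref{eqappAA1} and \eqref{eqappAA2} to reduce, via the orthogonal conjugation $D \mapsto \overline D = Q \, D \, Q^T$, $B \mapsto b = \diag(\lambda_1^2, \lambda_2^2, \lambda_3^2)$, to the diagonal case, and then expands the two quadratic forms in the matrix entries $d_{ij} = (\overline D)_{ij}$. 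This expansion produces the explicit expressions displayed immediately above the lemma, each of which is manifestly a non-negative combination of squares $d_{ij}^2$ weighted by products of $\lambda_i^2$ and $(\lambda_i^2 - \lambda_j^2)^2$; hence $\langle B^2 D, [B,D]\rangle \ge 0$ and $\langle B^2 D B, [B,D]\rangle \ge 0$, and multiplying by $\nu_2, \nu_3 \ge 0$ keeps them non-negative.

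Adding the four contributions gives $\langle \mathbb{A}^{\circ}(B).D, D \rangle \ge 2 \, \langle B \, D, D \rangle \ge 2 \, \lambda_{\min}(B) \, \norm{D}^2 > 0$ for every $D \neq 0$, completing the argument. The only genuinely non-routine point is the coordinate expansion establishing non-negativity of the two cross-terms $\langle B^2 D, [B,D]\rangle$ and $\langle B^2 D B, [B,D]\rangle$; once the diagonal reduction is in place this is a finite algebraic verification that the resulting polynomials in the $d_{ij}$ and $\lambda_i$ are sums of squares with non-negative coefficients. Everything else reduces to collecting the estimates from $\eqref{eqA238}$ and \eqref{eqposdefAA}.
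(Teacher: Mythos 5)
Your proposal is correct and follows essentially the same route as the paper: both start from the decomposition \eqref{eqposdefAA}, dispose of the $\langle B D, D\rangle$ and $\nu_1$ terms by positive definiteness of $B$ and the squared Frobenius norm, and establish non-negativity of the $\nu_2$- and $\nu_3$-contractions by the isotropy reduction \eqref{eqappAA1}--\eqref{eqappAA2} to diagonal $b$ followed by the explicit sum-of-squares expansion in the entries $d_{ij}$. The only small addition you make beyond the paper's text is to spell out that minor and major symmetry (from \eqref{eqminorsymA} and Proposition \ref{propmajorsymA}) are what let one upgrade the positivity of the quadratic form to $\mathbb{A}^{\circ}(B)\in\Sym^{++}_4(6)$, which is a reasonable clarification but not a different argument.
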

\begin{rem}
This property of the representation formula \eqref{eqAAb} seems to have gone unnoticed hitherto.
\end{rem}
\subsection{Conditions for the invertibility of $\mathbb{A}^{\circ}(B)$} \label{appendixAA}
We would also like to understand for which range of weight functions $\nu_1, \nu_2, \nu_3$ and $B \in \Sym^{++}(3)$ we can expect invertibility of $\mathbb{A}^{\circ}(B)$ given in \eqref{eqAAb}. This is a pertinent minimal requirement for any reasonable corotational rate, since by definition
	\begin{align}
	\frac{\DD^{\circ}}{\DD t}[B] = \mathbb{A}^{\circ}(B).D.
	\end{align}
If $\mathbb{A}^{\circ}(B)$ were not invertible for some $B \in \Sym^{++}(3)$, there would exist a non-zero stretch rate $D \in \Sym(3) \! \setminus \! \{0\}$ for which the corotational derivative $\frac{\DD^{\circ}}{\DD t}[B] = 0$, which is physically absurd. \\
\\
\noindent It is clear that the quadratic function $D \mapsto \langle \mathbb{A}^{\circ}(B).D, D \rangle$ is convex in $\Sym(3) \cong \mathbb{R}^6$ if and only if the matrix $\widetilde{\mathbb{A}}^\circ(B) \in \Sym^{++}(6)$ defined by
	\begin{align}
	\langle \widetilde{\mathbb{A}}^{\circ}(B).
		\begin{pmatrix}
		d_{11}\\d_{22}\\d_{33}\\d_{12}\\d_{13}\\d_{23}
		\end{pmatrix},
		\begin{pmatrix}
		d_{11}\\d_{22}\\d_{33}\\d_{12}\\d_{13}\\d_{23}
		\end{pmatrix}\rangle_{\R^6}
	:= \left\{
		\begin{array}{l}
		\langle D \, B+B \, D,D\rangle \\
		\qquad \quad +\nu_1\, \langle B\, \sk(B \, D)-\sk(B \, D) \, B ,D\rangle \\
		\qquad \quad +\nu_2\, \langle B\, \sk(B^2 \, D)-\sk(B^2 \, D) \, B ,D\rangle \\
		\qquad \quad + \nu_3\, \langle B\, \sk(B^2 \, D \, B)-\sk(B^2 \, D \, B) \, B ,D\rangle
		\end{array}
	\right.
	\end{align}
is positive semi-definite and it is strictly convex if and only if the matrix $\widetilde{\mathbb{A}}^{\circ}(B)$ is positive definite. One can show that the matrix $\widetilde{\mathbb{A}}^{\circ}(B)$ has the singular values
	\begin{equation}
	\begin{alignedat}{2}
	a_{11} &= 2 \, \lambda _1^2 , \qquad \qquad a_{22} = 2 \, \lambda _2^2, \qquad \qquad a_{33} = 2 \, \lambda _3^2,\\
	a_{44} &= \frac{1}{2} \left\{ 4 \left(\lambda _1^2+\lambda _2^2\right) + 2 \, \nu_1 \left(\lambda _1^2-\lambda _2^2\right)^2 + \nu_2 \left(\lambda _1^2+\lambda _2^2\right) \left(\lambda _1^2-\lambda _2^2\right)^2 + \nu_3 \, \lambda _1^2 \, \lambda _2^2 \left(\lambda _1^2-\lambda _2^2\right)^2 \right\} \\ 
	&= 2 \, (\lambda_1^2 + \lambda_2^2) + \frac12 \, (\lambda_1^2 - \lambda_2^2)^2 \, \bigg\{2 \, \nu_1 + \nu_2 \, (\lambda_1^2 + \lambda_2^2) + \nu_3 \, \lambda_1^2 \, \lambda_2^2 \bigg\} \, , \\
	a_{55} &= \frac{1}{2} \left\{ 4 \left(\lambda _1^2+\lambda _3^2\right) + 2 \, \nu_1 \left(\lambda _1^2-\lambda _3^2\right)^2 + \nu_2 \left(\lambda _1^2+\lambda _3^2\right) \left(\lambda _1^2-\lambda _3^2\right)^2 + \nu_3 \, \lambda _1^2 \, \lambda _3^2 \left(\lambda _1^2-\lambda _3^2\right)^2 \right\} \\
	&= 2 \, (\lambda_1^2 + \lambda_3^2) + \frac12 \, (\lambda_1^2 - \lambda_3^2)^2 \, \bigg\{2 \, \nu_1 + \nu_2 \, (\lambda_1^2 + \lambda_3^2) + \nu_3 \, \lambda_1^2 \, \lambda_3^2 \bigg\} \, , \\
	a_{66} &= \frac{1}{2} \left\{ 4 \left(\lambda _2^2+\lambda _3^2\right) + 2 \, \nu_1 \left(\lambda _2^2-\lambda _3^2\right)^2 + \nu_2 \left(\lambda _2^2+\lambda _3^2\right) \left(\lambda _2^2-\lambda _3^2\right)^2 + \nu_3 \, \lambda _2^2 \, \lambda _3^2 \left(\lambda _2^2-\lambda _3^2\right)^2 \right\} \\
	&= 2 \, (\lambda_2^2 + \lambda_3^2) + \frac12 \, (\lambda_2^2 - \lambda_3^2)^2 \, \bigg\{2 \, \nu_1 + \nu_2 \, (\lambda_2^2 + \lambda_3^2) + \nu_3 \, \lambda_2^2 \, \lambda_3^2 \bigg\} \, .
	\end{alignedat}
	\end{equation}
Therefore, if $\nu_1, \nu_2, \nu_3 \ge 0$ and $\lambda_i > 0$, the matrix $\widetilde{\mathbb{A}}^{\circ}(B)$ is positive definite. On the other hand, the matrix $\widetilde{\mathbb{A}}^{\circ}(B)$ (the operator $\mathbb{A}^{\circ}(B)$) is invertible if $\nu_1, \nu_2, \nu_3$ are such that $a_{44}, a_{55}, a_{66} \neq 0$ for all $\lambda_1, \lambda_2, \lambda_3 > 0$, leading to the invertibility statement:
\begin{lem}
The fourth order tensor $\mathbb{A}^{\circ}(B)$ is invertible if and only if for all $\lambda_1, \lambda_2, \lambda_3 > 0$ we have
	\begin{equation}
	\begin{alignedat}{2}
	2 \, (\lambda_1^2 + \lambda_2^2) + \frac12 \, (\lambda_1^2 - \lambda_2^2)^2 \, \bigg\{2 \, \nu_1 + \nu_2 \, (\lambda_1^2 + \lambda_2^2) + \nu_3 \, \lambda_1^2 \, \lambda_2^2 \bigg\} &\neq 0, \\
	2 \, (\lambda_1^2 + \lambda_3^2) + \frac12 \, (\lambda_1^2 - \lambda_3^2)^2 \, \bigg\{2 \, \nu_1 + \nu_2 \, (\lambda_1^2 + \lambda_3^2) + \nu_3 \, \lambda_1^2 \, \lambda_3^2 \bigg\} &\neq 0, \\
	2 \, (\lambda_2^2 + \lambda_3^2) + \frac12 \, (\lambda_2^2 - \lambda_3^2)^2 \, \bigg\{2 \, \nu_1 + \nu_2 \, (\lambda_2^2 + \lambda_3^2) + \nu_3 \, \lambda_2^2 \, \lambda_3^2 \bigg\} &\neq 0.
	\end{alignedat}
	\end{equation}
\end{lem}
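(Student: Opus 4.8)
The plan is to diagonalize the fourth–order operator $\mathbb{A}^{\circ}(B)$ by passing to an orthonormal basis of $\Sym(3)\cong\R^6$ adapted to the eigenframe of $B$, to observe that in this basis the associated $6\times6$ matrix is \emph{diagonal}, and then to invoke the elementary fact that a diagonal matrix is invertible if and only if none of its diagonal entries vanishes. First I would reduce to diagonal $B$ by isotropy: $\mathbb{A}^{\circ}$ is built from $B$, the commutator $[B,\cdot\,]$, and the $\OO(3)$–equivariant operation $\sk$, so $\mathbb{A}^{\circ}(Q^T B Q).(Q^T D Q) = Q^T(\mathbb{A}^{\circ}(B).D)\,Q$ for every $Q\in\OO(3)$ — exactly what the computations \eqref{eqappAA1}--\eqref{eqappAA2} record. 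Hence $\mathbb{A}^{\circ}(B)$ is conjugate, as a linear operator on $\Sym(3)$, to $\mathbb{A}^{\circ}(b)$ with $b=\diag(\lambda_1^2,\lambda_2^2,\lambda_3^2)$, so invertibility may be checked on $b$.

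Next I would compute the matrix of $\mathbb{A}^{\circ}(b)$ in the orthonormal basis $E_{11},E_{22},E_{33},\tfrac1{\sqrt2}(E_{12}{+}E_{21}),\tfrac1{\sqrt2}(E_{13}{+}E_{31}),\tfrac1{\sqrt2}(E_{23}{+}E_{32})$ of $\Sym(3)$, where $E_{ij}$ are the standard matrix units. The key structural point is: if $D$ is diagonal then $[b,D]=0$, so all $\nu_k$–terms in \eqref{eqdisplayA} drop out and $\mathbb{A}^{\circ}(b).E_{ii}=2\lambda_i^2\,E_{ii}$; if $D=E_{ij}+E_{ji}$ with $i\neq j$, then $\sk(bD)$, $\sk(b^2D)$ and $\sk(b^2Db)$ are all skew matrices supported only on the $(i,j)$ off–diagonal slot, and left/right multiplication by the diagonal $b$ keeps them in that one–dimensional line, so each off–diagonal basis vector is again an eigenvector of $\mathbb{A}^{\circ}(b)$. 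Therefore $\widetilde{\mathbb{A}}^{\circ}(b)$ is diagonal — it is symmetric in any case by the major symmetry of Proposition \ref{propmajorsymA} — and its diagonal entries are obtained by evaluating $\langle\mathbb{A}^{\circ}(b).D_k,D_k\rangle$ on each basis vector $D_k$. Using \eqref{eqposdefAA} together with $\eqref{eqA238}_1$ and the two explicit scalar expressions $\langle b^2D,[b,D]\rangle$ and $\langle b^2Db,[b,D]\rangle$ already computed in the excerpt, this bookkeeping produces precisely $a_{11}=2\lambda_1^2$, $a_{22}=2\lambda_2^2$, $a_{33}=2\lambda_3^2$ on the diagonal directions and the stated values of $a_{44},a_{55},a_{66}$ on the off–diagonal ones.

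With the diagonal form in hand, $\mathbb{A}^{\circ}(B)$ is invertible if and only if $\widetilde{\mathbb{A}}^{\circ}(b)$ is, i.e.~if and only if $a_{kk}\neq0$ for $k=1,\dots,6$. Since $\lambda_i>0$ forces $a_{11},a_{22},a_{33}>0$ unconditionally, only the three conditions $a_{44},a_{55},a_{66}\neq0$ remain; quantifying over all admissible triples $\lambda_1,\lambda_2,\lambda_3>0$ (equivalently, over all $B\in\Sym^{++}(3)$) yields the stated characterization. For the converse implication one notes that if one of $a_{44},a_{55},a_{66}$ vanishes for some positive $\lambda$'s, the corresponding off–diagonal direction lies in $\ker\mathbb{A}^{\circ}(B)\setminus\{0\}$, so $\mathbb{A}^{\circ}(B)$ is not invertible.

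The step I expect to be the main obstacle is the second one: carefully verifying that each line $\mathrm{span}\{E_{ij}+E_{ji}\}$ is genuinely invariant under every one of the three $\nu_k$–terms, so that the $6\times6$ matrix decouples into $1\times1$ blocks, and then tracking the scalar coefficients without sign or factor errors. Once one accepts the explicit identities $\langle b^2\overline D,[b,\overline D]\rangle = d_{12}^2(\lambda_1^2{+}\lambda_2^2)(\lambda_1^2{-}\lambda_2^2)^2+\dots$ and $\langle b^2\overline D\,b,[b,\overline D]\rangle = d_{12}^2\lambda_1^2\lambda_2^2(\lambda_1^2{-}\lambda_2^2)^2+\dots$ recorded in the excerpt, the remaining algebra is routine and the invertibility conclusion is immediate.
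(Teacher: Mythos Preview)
Your proposal is correct and follows essentially the same route as the paper: reduce to diagonal $B$ by isotropy, compute the $6\times 6$ matrix $\widetilde{\mathbb{A}}^{\circ}(b)$ in the eigenframe basis of $\Sym(3)$, observe that it is diagonal with entries $a_{11},\dots,a_{66}$ as listed, and conclude that invertibility is equivalent to the non-vanishing of $a_{44},a_{55},a_{66}$ (the first three being automatically positive). The only difference is that the paper simply asserts the list of singular values (``One can show that\dots''), whereas you supply the structural reason---invariance of each one-dimensional line $\mathrm{span}\{E_{ij}+E_{ji}\}$ under every $\nu_k$-term---for why the matrix actually diagonalizes; this is a welcome clarification rather than a different argument.
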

\noindent Regarding positive definiteness of $\mathbb{A}^{\circ}(B)$ we likewise have:
\begin{lem}
The fourth order tensor $\mathbb{A}^{\circ}(B)$ is positive definite if and only if for all $\lambda_1, \lambda_2, \lambda_3 > 0$ we have
	\begin{equation}
	\begin{alignedat}{2}
	2 \, (\lambda_1^2 + \lambda_2^2) + \frac12 \, (\lambda_1^2 - \lambda_2^2)^2 \, \bigg\{2 \, \nu_1 + \nu_2 \, (\lambda_1^2 + \lambda_2^2) + \nu_3 \, \lambda_1^2 \, \lambda_2^2 \bigg\} &> 0, \\
	2 \, (\lambda_1^2 + \lambda_3^2) + \frac12 \, (\lambda_1^2 - \lambda_3^2)^2 \, \bigg\{2 \, \nu_1 + \nu_2 \, (\lambda_1^2 + \lambda_3^2) + \nu_3 \, \lambda_1^2 \, \lambda_3^2 \bigg\} &> 0, \\
	2 \, (\lambda_2^2 + \lambda_3^2) + \frac12 \, (\lambda_2^2 - \lambda_3^2)^2 \, \bigg\{2 \, \nu_1 + \nu_2 \, (\lambda_2^2 + \lambda_3^2) + \nu_3 \, \lambda_2^2 \, \lambda_3^2 \bigg\} &> 0.
	\end{alignedat}
	\end{equation}
\end{lem}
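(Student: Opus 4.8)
The plan is to read off the answer from the singular values of the $6\times 6$ matrix $\widetilde{\mathbb{A}}^{\circ}(B)$ recorded just above. By the very definition of $\widetilde{\mathbb{A}}^{\circ}(B)$, the quadratic form $D\mapsto\langle\mathbb{A}^{\circ}(B).D,D\rangle$ on $\Sym(3)$ is positive definite precisely when $\widetilde{\mathbb{A}}^{\circ}(B)\in\Sym^{++}(6)$, so it suffices to decide when that matrix is positive definite. Since the coefficients $\nu_k$ are isotropic invariants of $B$, since $D\,B+B\,D$ is obviously isotropic in $(B,D)$, and since each of the three generators $B\,\sk(B\,D)-\sk(B\,D)\,B$, $B\,\sk(B^2 D)-\sk(B^2 D)\,B$, $B\,\sk(B^2 D\,B)-\sk(B^2 D\,B)\,B$ is isotropic in $(B,D)$, the conjugation identities \eqref{eqappAA1}--\eqref{eqappAA2} show that $\langle\mathbb{A}^{\circ}(B).D,D\rangle$ is unchanged under $(B,D)\mapsto(Q^T B Q,\,Q^T D Q)$; hence I may assume $B=b=\diag(\lambda_1^2,\lambda_2^2,\lambda_3^2)$.

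For such a diagonal $b$ I would substitute the componentwise identities $[b,D]_{ij}=(\lambda_i^2-\lambda_j^2)\,d_{ij}$, $(b\,D)_{ij}=\lambda_i^2\,d_{ij}$, $(b^2 D)_{ij}=\lambda_i^4\,d_{ij}$ into the four summands of \eqref{eqposdefAA}. Each summand then collapses to a pure sum of squares $\sum_{i<j}(\cdots)\,d_{ij}^2$ — the expansions of $\langle b\,D,D\rangle$, $\tfrac12\norm{[b,D]}^2$, $\langle b^2 D,[b,D]\rangle$ and $\langle b^2 D\,b,[b,D]\rangle$ in exactly this form are already displayed above — so that $\langle\mathbb{A}^{\circ}(b).D,D\rangle$ contains no mixed products $d_{ij}\,d_{kl}$. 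Therefore $\widetilde{\mathbb{A}}^{\circ}(b)$ is a \emph{diagonal} matrix whose diagonal entries are precisely the six expressions $a_{11},\dots,a_{66}$ computed above, and for a diagonal (hence symmetric) matrix positive definiteness is equivalent to the strict positivity of every diagonal entry.

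It then remains to observe that $a_{11}=2\lambda_1^2$, $a_{22}=2\lambda_2^2$, $a_{33}=2\lambda_3^2$ are automatically positive because $\lambda_1,\lambda_2,\lambda_3>0$; hence the only genuine requirements are $a_{44}>0$, $a_{55}>0$, $a_{66}>0$, which are exactly the three displayed inequalities. Demanding that $\mathbb{A}^{\circ}(B)$ be positive definite for every $B\in\Sym^{++}(3)$ is the same as demanding these three inequalities for every admissible eigenvalue triple $\lambda_1,\lambda_2,\lambda_3>0$ (the $\nu_k$ being read as the corresponding isotropic functions of $\lambda_1^2,\lambda_2^2,\lambda_3^2$), and the reverse implication is immediate since the eigenvalues of $B$ already range over all of $(0,\infty)^3$.

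The step I expect to require the most care is the diagonality claim of the second paragraph, i.e.\ checking that for diagonal $b$ no cross-terms $d_{ij}\,d_{kl}$ with $(i,j)\neq(k,l)$ survive in $\langle\mathbb{A}^{\circ}(b).D,D\rangle$; this is what turns the matrix-positivity question into six scalar conditions and what identifies the singular values with the displayed formulas. Once that is secured, isolating the three trivially positive entries $a_{11},a_{22},a_{33}$ is pure bookkeeping, and the very same computation simultaneously proves the companion invertibility lemma by replacing ``$>0$'' with ``$\neq 0$'' throughout.
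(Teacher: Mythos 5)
Your proposal is correct and follows the same route as the paper: reduce to the symmetric $6\times 6$ matrix $\widetilde{\mathbb{A}}^{\circ}(B)$, pass to principal axes so that $B=b=\diag(\lambda_1^2,\lambda_2^2,\lambda_3^2)$, and read positivity off from the diagonal entries $a_{11},\dots,a_{66}$, of which $a_{11},a_{22},a_{33}=2\lambda_i^2$ are automatic. What you add — and what the paper leaves as ``one can show'' — is the explicit observation that all four generators of $\mathbb{A}^{\circ}(b)$ decouple in the $d_{ij}$-components (no cross-terms $d_{ij}d_{kl}$), so $\widetilde{\mathbb{A}}^{\circ}(b)$ is genuinely diagonal; this is the right justification for identifying the listed ``singular values'' with the diagonal entries and is the only non-bookkeeping step.
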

\begin{definition}
We call any $\mathbb{A}^{\circ}(B)$ of the form \eqref{eqdisplayA} \textbf{totally positive} if $\nu_1, \nu_2, \nu_3 \ge 0$.
\end{definition}
\noindent Every totally positive corotational rate is clearly a positive corotational rate. In this sense, $\frac{\DD^{\ZJ}}{\DD t}, \frac{\DD^{\GN}}{\DD t}$ and $\frac{\DD^{\log}}{\DD t}$ qualify as positive corotational rates and $\frac{\DD^{\ZJ}}{\DD t}$ is in addition totally positive.
\begin{cor}
We know for all objective corotational rates $\frac{\DD^{\circ}}{\DD t}$
	\begin{align}
	\frac{\DD^{\circ}}{\DD t}[\sigma(B)] = \DD_B \sigma(B). \frac{\DD^{\circ}}{\DD t}[B].
	\end{align}
Consider again
	\begin{align}
	\frac{\DD^{\circ}}{\DD t}[B] = \frac{\DD}{\DD t}[B] - \Omega^{\circ} \, B + B \, \Omega^{\circ} = \mathbb{A}^{\circ}(B).D
	\end{align}
and assume that $[B, D] = B \, D - D \, B = 0$. Then $\frac{\DD^{\circ}}{\DD t}[B] = 2 \, B \, D$ coincides for all corotational rates $\frac{\DD^{\circ}}{\DD t}$ with material spins $\Omega^{\circ}$ defined in \eqref{eqsecondhalf}.
\end{cor}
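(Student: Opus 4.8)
The plan is to read the statement off directly from the explicit commutator representation of the induced stiffness tensor established in \eqref{eqrepresA} (equivalently \eqref{eqAAb}), together with the elementary identities \eqref{eqA230}. Recall that for a corotational rate with material spin \eqref{eqsecondhalf} one has
	\begin{align*}
	\mathbb{A}^{\circ}(B).D = D \, B + B \, D &+ \frac{\nu_1}{2} \, (B \, [B,D] - [B,D] \, B) \\
	&+ \frac{\nu_2}{2} \, (B^2 \, [B,D] - [B,D] \, B^2) + \frac{\nu_3}{2} \, (B^2 \, [B,D] \, B - B \, [B,D] \, B^2) \, ,
	\end{align*}
so the key observation is that each of the three $\nu_k$-contributions is a linear expression in the commutator $[B,D] = B \, D - D \, B$.

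First I would invoke this observation to conclude that, under the hypothesis $[B,D] = 0$, all three correction terms vanish identically, regardless of the values taken by the isotropic invariants $\nu_1, \nu_2, \nu_3$ of $B$. What remains is $\mathbb{A}^{\circ}(B).D = D \, B + B \, D$, and using once more that $[B,D] = 0$ means $B \, D = D \, B$, this equals $2 \, B \, D$. Since the resulting expression retains no dependence on $\nu_1, \nu_2, \nu_3$, it is the same for every corotational rate with material spin \eqref{eqsecondhalf}; in particular it coincides with the Zaremba-Jaumann value $\frac{\DD^{\ZJ}}{\DD t}[B] = B \, D + D \, B$.

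An equivalent and perhaps cleaner route is to work at the level of the spin itself: by \eqref{eqA230}, $[B,D] = 0$ forces $\sk(B \, D) = \sk(B^2 \, D) = \sk(B^2 \, D \, B) = 0$, so the material spin in \eqref{eqsecondhalf} collapses to $\Omega^{\circ} = W$, and then $\frac{\DD^{\circ}}{\DD t}[B] = \frac{\DD}{\DD t}[B] + B \, W - W \, B = D \, B + B \, D$ by the identity $\frac{\DD}{\DD t}[B] = D \, B + B \, D + W \, B - B \, W$. I do not expect any genuine obstacle: the whole argument reduces to the bookkeeping remark that, after substituting \eqref{eqA230}, every spin-correction term in \eqref{eqsecondhalf} is proportional to $[B,D]$, and the rest is a one-line simplification using $B \, D = D \, B$.
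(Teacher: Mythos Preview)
Your proof is correct and follows essentially the same approach as the paper: invoke the commutator representation \eqref{eqrepresA} of $\mathbb{A}^{\circ}(B)$, observe that all $\nu_k$-terms vanish when $[B,D]=0$, and simplify $B\,D+D\,B$ to $2\,B\,D$. The alternative route via $\Omega^{\circ}=W$ that you sketch is also valid and amounts to the same computation read at the level of the spin rather than the stiffness tensor.
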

\begin{proof}
Using the representation \eqref{eqrepresA} of $\mathbb{A}^{\circ}(B)$ given by
	\begin{equation}
	\begin{alignedat}{2}
	\mathbb{A}^{\circ}(B).D = D \, B + B \, D &+ \frac{\nu_1}{2} \, (B \, [B,D] - [B,D] \, B) \\
	&+ \frac{\nu_2}{2} \, (B^2 \, [B,D] - [B,D] \, B^2) + \frac{\nu_3}{2} \, (B^2 \, [B,D] \, B - B \, [B,D] \, B^2) \, ,
	\end{alignedat}
	\end{equation}
we see that for $[B,D] = 0$ it only remains
	\begin{align}
	\frac{\DD^{\circ}}{\DD t}[B] &= \mathbb{A}^{\circ}(B).D = B \, D + D \, B = 2 \, B \, D. \notag \qedhere
	\end{align}
\end{proof}
\begin{example}
If $[B,D] = 0 \quad \big(\iff [V,D] = 0\big)$, we directly verify for the most common objective corotational rates
	\begin{equation}
	\begin{alignedat}{2}
	\frac{\DD^{\ZJ}}{\DD t}[B] = B \, D + D \, B &= 2 \, B \, D, \qquad \frac{\DD^{\GN}}{\DD t}[B] = 2 \, V \, D \, V = 2 \, V^2 \, D = 2 \, B \, D, \\
	&\text{and\footnotemark} \qquad \frac{\DD^{\log}}{\DD t}[B] = 2 \, [\DD_B \log B]^{-1}.D = 2 \, B \, D.
	\end{alignedat}
	\end{equation}
\end{example}
\footnotetext
{
The last result is easily seen in the scalar case since here we have $\DD_B \log B = B^{-1}$ and thus \newline $2 \, [\DD_B \log B]^{-1} . D = 2 \, (B^{-1})^{-1} \, D = 2 \, B \, D$.
}

\section{In-depth analysis of the stiffness tensor $\mathbb{A}^{\circ}(B)$}
We would now like to continue our investigation of the stiffness tensor $\mathbb{A}^{\circ}(B)$, possibly leading to sharper statements and necessitating a slight shift of notation and methodology. Indeed, we need to use projection operators and the representation in principal axes. As a result we obtain necessary and sufficient conditions for the positive definiteness of $\mathbb{A}^{\circ}(B)$ as well as sharp statements on the invertibility of $\mathbb{A}^{\circ}(B)$.

Let ${A},{H}, X \in \R^{3 \times 3}$. We define the \textbf{symmetric tensor product} ${A}\!\overset{\text{sym}}{\otimes}\!{H}$ for second-order tensors according to the definitions given in \cite{Curnier1994,holzapfel2000,PeyrautANM2009} and set
	\begin{equation}
	\label{2}
	({A}\!\overset{\text{sym}}{\otimes}\!{B}).{X} := {A}\, (\text{sym}\,{X}) \, {B}^T.
	\end{equation}
The left stretch tensor ${V}$ can be represented in the classical spectral form
	\begin{equation}
	\label{3}
	{V}=\sum_{k=1}^{3} \lambda_k \, {n}_k\otimes{n}_k, \qquad V = Q \, \diag(\lambda_1, \lambda_2, \lambda_3) \, Q^T, \qquad Q = \big(n_1 | n_2 | n_3\big) \in \OO(3),
	\end{equation}
where $\lambda_k\in \R^+$ are the principal stretches, $\{{n}_k\}$ ($k=1,2,3$) is the triad of the corresponding subordinate right-oriented orthonormal eigenvectors (principal directions) of the tensor ${V}$, the symbol $\otimes$ denotes the dyadic product of vectors and $Q = (n_1 | n_2 | n_3)$ denotes the orthogonal matrix with columns $n_1, n_2, n_3$.

For multiple eigenvalues $\lambda_k$, the corresponding eigenvectors ${n}_k$ are defined ambiguously. This ambiguity can be circumvented by representing the tensor ${V}$ in terms of \textbf{eigenprojections} (see e.g.~\cite{Bertram2021,korobeynikov2011,KorobeynikovAM2023,LuehrCMAME1990})
	\begin{equation}
	\label{4}
	{V}=\sum^m_{i=1}\lambda_i \, {V}_i.
	\end{equation}
Here $\lambda_i$ are all different $m$ eigenvalues\footnote
{
The number $m$ ($1\leq m\leq 3$) will be called the \emph{eigenindex}.
}
of the tensor ${V}$ and ${V}_i$ ($i=1,\ldots,m$) are the subordinate \emph{eigenprojections}. We introduce the eigenprojections of the tensor ${V}$ using Sylvester's formula:
\begin{large}
	\begin{equation}
	\label{5}
	    {V}_i=
	    \begin{cases}
	   \dd \prod^m\limits_{\substack{j=1\\ i\neq j}} \frac{{V}-\lambda_j \, \id}{\lambda_i-\lambda_j}, & \text{if}\ m=2,3 \\
	    \id, & \text{if}\ m=1.
	   \end{cases}
	\end{equation}
\end{large}
The eigenprojections have the following properties \cite{LuehrCMAME1990}:
	\begin{equation}
	\label{6}
	   {V}_i \, {V}_j=
	   \begin{cases}
	    {V}_i & \text{if}\ i=j \\
	    {0} & \text{if}\ i\neq j
	   \end{cases},
	\qquad \sum^m_{i=1}{V}_i=\id, \qquad \text{tr}\,{V}_i=m_i\qquad (i,j=1,\ldots,m),
	\end{equation}
where $m_i$ denotes the multiplicity of an eigenvalue $\lambda_i$. \\
We point out the representation formula for the left (Eulerian) Cauchy-Green deformation tensor
	\begin{equation}
	\label{7}
	    B \equiv {F} \, {F}^T = {V}^2= \sum_{k=1}^{3} \lambda_k^2 \, {n}_k\otimes{n}_k = \sum^m_{i=1}\lambda_i^2 \,  {V}_i = Q \, \diag(\lambda_1^2, \lambda_2^2, \lambda_3^2) \, Q^T.
	\end{equation}
Corollary 2.1 of \cite{korobeynikov2018} implies the next Proposition:
\begin{prop}
\label{Pr:1-1}
Let ${H} \in \Sym(3)$ be an arbitrary symmetric second order tensor and let the tensor ${V}$ have the spectral representation \eqref{4}. We may represent the  tensor ${H}\in \Sym(3)$ in the following form:\footnote{Hereinafter, the notation $\sum_{i\neq j=1}^{m}$ denotes the summation over $i,j=1,\ldots, m$ and $i\neq j$ and this summation is assumed to vanish when $m=1$.}
	\begin{equation}
	\label{7-a}
	{H} = \widehat{{H}} + \widetilde{{H}},\qquad \quad \widehat{{H}} := \sum_{i=1}^{m} {V}_i\, {H}\,{V}_i,\qquad \quad
	\widetilde{{H}} := \sum_{i\neq j=1}^{m} {V}_i\, {H}\,{V}_j,
	\end{equation}
so that $\widehat{{H}},\, \widetilde{{H}}\in \Sym(3)$ are the components of the tensor ${H}$ that are coaxial and orthogonal to the tensor ${V}$.\footnote{Tensors ${X},\,{Y}\in \Sym(3)$ will be called orthogonal if the equality $\langle {X}, {Y} \rangle=0$ is satisfied.}
\end{prop}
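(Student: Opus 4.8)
\emph{Proof proposal.} The plan is to read off the decomposition $H=\widehat H+\widetilde H$ from the completeness relation $\sum_{i=1}^{m}V_i=\id$ in \eqref{6}, and then to check the three asserted properties (symmetry of the two pieces, coaxiality of $\widehat H$ with $V$, orthogonality of $\widetilde H$) using only the projector algebra $V_iV_j=\delta_{ij}V_i$, the symmetry $V_i^T=V_i$, and the cyclic invariance of the trace. First I would write, using $\sum_{i=1}^m V_i=\id$,
\[
H=\id\,H\,\id=\Bigl(\sum_{i=1}^{m}V_i\Bigr)H\Bigl(\sum_{j=1}^{m}V_j\Bigr)=\sum_{i,j=1}^{m}V_i\,H\,V_j=\widehat H+\widetilde H,
\]
separating the diagonal terms $i=j$ (which give $\widehat H=\sum_i V_iHV_i$) from the off-diagonal terms $i\neq j$ (which give $\widetilde H=\sum_{i\neq j}V_iHV_j$). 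For the eigenindex $m=1$ the off-diagonal sum is empty, $V=\lambda_1\id$, and the statement is trivial, so one may assume $m\in\{2,3\}$.

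For \textbf{symmetry}: by Sylvester's formula \eqref{5} each $V_i$ is a polynomial in the symmetric tensor $V$, hence $V_i^T=V_i$; since $H\in\Sym(3)$ this gives $(V_iHV_i)^T=V_iH^TV_i=V_iHV_i$, so $\widehat H\in\Sym(3)$, and $\widetilde H^T=\sum_{i\neq j}V_jH^TV_i=\sum_{i\neq j}V_jHV_i=\widetilde H$ after relabelling $i\leftrightarrow j$. For \textbf{coaxiality} of $\widehat H$ with $V$: using $V=\sum_k\lambda_kV_k$ and $V_iV_k=\delta_{ik}V_i$ I would compute $\widehat H\,V=\sum_{i,k}\lambda_k\,V_iHV_iV_k=\sum_i\lambda_i\,V_iHV_i$ and, identically, $V\,\widehat H=\sum_i\lambda_i\,V_iHV_i$, so $[\widehat H,V]=0$. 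Two commuting symmetric tensors are simultaneously diagonalizable, hence $\widehat H$ is coaxial with $V$; equivalently, $\widehat H$ leaves each eigenspace of $V$ invariant.

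For \textbf{orthogonality} of $\widetilde H$ I would prove the sharper fact that $\widetilde H$ is orthogonal to \emph{every} tensor coaxial with $V$, in particular to $\widehat H$ and to $V$ itself. Take $i\neq j$ and any coaxial $K$, written as $K=\sum_k K_k$ with $K_k=V_kKV_k$. Then $\langle V_iHV_j,\,V_kKV_k\rangle=\tr\!\bigl((V_iHV_j)^T V_kKV_k\bigr)=\tr\!\bigl(V_jHV_iV_kKV_k\bigr)=\delta_{ik}\,\tr\!\bigl(V_jHV_iKV_i\bigr)=\delta_{ik}\,\tr\!\bigl(V_iV_jHV_iK\bigr)=0$, since $V_iV_j=0$ for $i\neq j$; summing over $i\neq j$ and over $k$ gives $\langle\widetilde H,\widehat H\rangle=0$, and the same computation with $K$ replaced by $\id$ or by $V$ gives $\langle\widetilde H,\id\rangle=\langle\widetilde H,V\rangle=0$. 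This identifies $\widehat H$ and $\widetilde H$ as the coaxial and the $V$-orthogonal components of $H$ and shows that $H=\widehat H+\widetilde H$ is an orthogonal splitting (one may add that this forces uniqueness of such a decomposition, though the proposition as stated only asserts the explicit formulas).

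\textbf{Main obstacle.} There is no genuine difficulty here; the whole argument is bookkeeping with the relations in \eqref{6}. The one place that needs mild care is the orthogonality step, where the cyclic invariance of the trace must be combined with $V_iV_j=0$ ($i\neq j$) to see that the off-diagonal part is orthogonal not merely to $V$ but to the entire coaxial subspace — keeping track of which index carries the Kronecker delta is the only spot where an index slip could creep in.
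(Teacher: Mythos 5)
Your proof is correct and complete. Note that the paper itself does not actually spell out a proof here: it simply states that the proposition is implied by Corollary 2.1 of Korobeynikov's work \cite{korobeynikov2018}. Your argument supplies the elementary verification that the paper outsources to that reference, relying only on the projector algebra $V_iV_j=\delta_{ij}V_i$, $\sum_i V_i=\id$, symmetry of the $V_i$, and cyclic invariance of the trace — all of which are available from \eqref{5}–\eqref{6}. The key steps — reading off $H=\widehat H+\widetilde H$ from $H=\bigl(\sum_i V_i\bigr)H\bigl(\sum_j V_j\bigr)$, checking $[\widehat H,V]=0$ for coaxiality, and showing $\widetilde H$ is orthogonal to the whole coaxial subspace (in particular to $V$) via $V_iV_j=0$ for $i\neq j$ — are all sound, and the slightly stronger orthogonality claim you prove (orthogonality to every coaxial tensor, not just $V$) is a genuine bonus since it yields uniqueness of the splitting. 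This is a valid and in fact more informative route than the paper's bare citation.
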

Here, we let  $\dot{{H}}:= \frac{\DD}{\DD t} H$ denote the \emph{material time derivative}. Note, that for the stretch tensor $D$ the expression \eqref{7-a} reduces to (cf.~\cite[Eq.~$(115)_2$]{korobeynikov2011})

	\begin{equation}
	\label{9-a}
	 D =  \sum_{k=1}^{m} \frac{\dot{\lambda}_k}{\lambda_k}{V}_k + \sum_{k\neq l=1}^{m} {V}_k\,  D\,{V}_l = \sum_{k=1}^{m} V_k \, D \, {V}_k + \sum_{k\neq l=1}^{m} {V}_k\,  D\,{V}_l =
		\begin{pmatrix}
		d_{11} & 0 & 0 \\
		0 & d_{22} & 0 \\
		0 & 0 & d_{33}
		\end{pmatrix}
		+
		\begin{pmatrix}
		0 & d_{12} & d_{13} \\
		d_{12} & 0 & d_{23} \\
		d_{13} & d_{23} & 0
		\end{pmatrix} \, ,
	\end{equation}
however the components $d_{ij} = \langle n_i, D .n_j \rangle$ ($i,j=1,2,3$) are defined ambiguously when $m = 1,2$.
\subsection{Family of material spin tensors and associated corotational rates}
Let ${h}\in \R^{3 \times 3}$ be any second-order Eulerian tensor. Consider any Eulerian corotational derivative of the form
	\begin{equation}
	\label{10}
	  \frac{\DD^{\circ}}{\DD t}[h] \equiv \frac{\DD}{\DD t}[h] + {h} \, \Omega^{\circ} - \Omega^{\circ} \, {h},
	\end{equation}
where the tensor $\Omega^{\circ} \in \mathfrak{so}(3)$ is associated with the corotational stress rate $\frac{\DD^{\circ}}{\DD t}[h]$ and belongs to the material spin tensor family derived by Xiao et al. (cf., \cite{xiao98_2,xiao98_1,XiaoIJSS1998}) of the following form
	\begin{equation}
	\label{11}
	    \Omega^{\circ} := W + \widetilde \Upsilon({V}, D) = W + \widetilde \Upsilon(B,D) \qquad \text{by abuse of notation (cf.~\ref{eqxiao1}).}
	\end{equation}
Hereinafter, $\widetilde \Upsilon \in \mathfrak{so}(3)$ is an isotropic tensor function of its arguments which is linear in $ D$ and has the form (cf.~Xiao et al.~\cite{xiao98_2,xiao98_1,XiaoIJSS1998})
	\begin{equation}
	\label{12}
	  \widetilde \Upsilon({V}, D) = \sum_{i\neq j=1}^{m}g_{ij} \, {V}_i \,  D \, {V}_j,\quad g_{ij} = g(\lambda_i,\lambda_j),\qquad g_{ji}=-g_{ij}, \qquad g(\alpha \, \lambda_i, \alpha \, \lambda_j) = g(\lambda_i, \lambda_j) \quad \forall \, \alpha>0.
	\end{equation}
The specific forms of the function $g(\lambda_i,\lambda_j)$ for material spin tensors are considered in \cite{xiao98_2,xiao98_1,XiaoIJSS1998}. In \cite{korobeynikov2011}, additional continuity restrictions\footnote
{
As described in Theorem 2.2 of \cite{korobeynikov2011}, continuity of the scalar function $g(\lambda_i,\lambda_j)$ is a sufficient condition for the continuity of the tensorial function $\widetilde \Upsilon({V}, D)$ with respect to the argument $V$.
}
are imposed on the forms of the functions $g(\lambda_i,\lambda_j)$ that allow these spin tensors to be meaningfully used in applications, i.e.~it is required that $g\colon \R^+ \times \R^+$ is continuous.

Below we give the expressions for the quantities $g_{ij}$ generating spin tensors from the family of material spin tensors of the form \eqref{11} and the identification of the classical corotational rates associated with them:
\begin{enumerate}
  \item \emph{The Zaremba--Jaumann rate} $\frac{\DD^{\ZJ}}{\DD t}$ associated with the vorticity tensor $ W$\\
\begin{equation}\label{13}
  g_{ij}^{\ZJ}=0.
\end{equation}
  \item \emph{The Green--Naghdi rate} $\frac{\DD^{\GN}}{\DD t}$ associated with the polar spin tensor $\Omega^R$\\
\begin{equation}\label{14}
  g_{ij}^{\GN}=\frac{\lambda_j - \lambda_i}{\lambda_i + \lambda_j}, \qquad \text{$g_{ij}^{\GN}$ is continuous for $(\lambda_i, \lambda_j) \in \R^+ \times \R^+$}.
\end{equation}
  \item \emph{The logarithmic rate} $\frac{\DD^{\log}}{\DD t}$ associated with the spin tensor\footnote
{
Note the validity of the equation $ \frac{\DD^{\log}}{\DD t}[\log V] = D$ (cf.~\cite{xiao97}).
}
$\Omega^{\log}$ \\
\begin{equation}\label{15}
    g_{ij}^{\log}\equiv \frac{\lambda_i^2 + \lambda_j^2}{\lambda_j^2 - \lambda_i^2}+\frac{1}{\log\lambda_i-\log\lambda_j}, \qquad \text{$g_{ij}^{\log}$ is continuous\footnotemark \
    for $(\lambda_i, \lambda_j) \in \R^+ \times \R^+$}.
\end{equation}
	\footnotetext{$g_{ij}^{\log}$ is continuous despite appearance.}
  \item \emph{The Gurtin--Spear rate} $\frac{\DD^{\GS}}{\DD t}$ associated with the twirl tensor of the Eulerian triad $\Omega^E$ \cite{gurtin1983relationship} i.e. \break $\Omega^{\GS} = \Omega^E = \dot Q^E \, (Q^E)^T$ from $V = Q^E \, \diag(V) \, (Q^E)^T$, where $Q^E$ is the rotation connected to the Eulerian triad.

\begin{equation}\label{16}
    g_{ij}^{\GS}\equiv \frac{\lambda_i^2 + \lambda_j^2}{\lambda_j^2 - \lambda_i^2}, \qquad \text{$g_{ij}^{\GS}$ is \textbf{not} continuous for $\lambda_i = \lambda_j > 0$}.
\end{equation}
\end{enumerate}

Note that the spin tensors $W$, $\Omega^R$, and $\Omega^{\log}$ belong to the family of continuous material spin tensors, while the spin $\Omega^{\GS}$ belongs to the class of material spins, but it is not continuous, i.e.~$\widetilde \Upsilon(V,D)$ is not continuous with respect to $V$ (cf.~\cite{korobeynikov2011}). \\
\\
As an additional non-standard example we consider the Aifantis rate \cite[eq.~(2.17)]{Zbib1988} with the spin tensor
	\begin{align}
	\Omega^{\Aif}(\sigma) := W + \zeta \, (\sigma^{\iso} \, D - D \, \sigma^{\iso}) \, ,
	\end{align}
where we restrict our attention to isochoric Cauchy stresses of the type $\sigma^{\iso}$ with the property $\sigma^{\iso}(\alpha \, B) = \sigma^{\iso}(B)$ in order to obtain a corotational rate with material spin according to Definition \ref{appmaterialspins} (homogeneity). This gives rise to the following two examples
\begin{example} \label{exampleA1}
Consider the Cauchy-elastic law
$\sigma^{\iso}(B) = \frac{B}{(\det B)^{\frac13}} - \id$. Rewriting the Aifantis spin $\Omega^{\Aif}$ for this choice of $\sigma^{\iso}(B)$ yields the spin tensor
	\begin{equation}
	\label{eqfirstaif}
	\begin{alignedat}{2}
	\Omega^{\Aif,1} &= W + \zeta \, \left(\left(\frac{B}{(\det B)^{\frac13}}-\id\right) \, D - D \, \left(\frac{B}{(\det B)^{\frac13}}-\id\right)\right) = W + \zeta \, \left(\frac{B}{(\det B)^{\frac13}} \, D - D \, \frac{B}{(\det B)^{\frac13}}\right) \\
	&= W + \frac{\zeta}{(\det B)^{\frac13}} \, [B,D] = W + \frac{2 \, \zeta}{(\det B)^{\frac13}} \, \sk(B \, D)
	\end{alignedat}
	\end{equation}
which in view of the representation formula \eqref{eqsecondhalf} is equivalent to setting $\nu_1 = \frac{2 \, \zeta}{(\det B)^{\frac13}}$ and $\nu_2 = \nu_3 = 0$ so that $\frac{\DD^{\Aif,1}}{\DD t}$ for $\sigma^{\iso}(B) = \frac{B}{(\det B)^{\frac13}} - \id$ is a totally positive rate if $\zeta \ge 0$. \\
\\
To find an explicit expression for the scalar function $g(\lambda_i, \lambda_j)$ in \eqref{12}, we first note that by \eqref{eqfirstaif}
	\begin{align}
	\label{eqkorob01}
	\widetilde \Upsilon(V,D) = \frac{\zeta}{(\det B)^{\frac13}} \, (B \, D - D \, B) \, .
	\end{align}
By the properties \eqref{6} and \eqref{7} we then obtain
	\begin{equation}
	\label{eqkorob02}
	\begin{alignedat}{2}
	B \, D - D \, B &= (\sum_{i=1}^m \lambda_i^2 \, V_i) \, (\sum_{k=1}^m \frac{\dot \lambda_k}{\lambda_k} \, V_k + \sum_{k \neq l = 1}^m V_k \, D \, V_l) - (\sum_{k=1}^m \frac{\dot \lambda_k}{\lambda_k} \, V_k + \sum_{k \neq l = 1}^m V_k \, D \, V_l) \, (\sum_{i=1}^m \lambda_i^2 \, V_i) \\
	&= \sum_{i=1}^m \lambda_i \, \dot \lambda_i \, V_i + \sum_{i \neq j = 1}^m \lambda_i^2 \, V_i \, D \, V_j - \sum_{i=1}^m \lambda_i \dot \lambda_i \, V_i - \sum_{i \neq j = 1}^m \lambda_j^2 \, V_i \, D \, V_j = \sum_{i \neq j = 1}^m (\lambda_i^2 - \lambda_j^2) \, V_i \, D \, V_j \, .
	\end{alignedat}
	\end{equation}
Substituting \eqref{eqkorob02} into \eqref{eqkorob01} and comparing the resulting expression with $\eqref{12}_1$ yields
	\begin{align}
	g_{ij}^{\Aif, 1} = \frac{\zeta}{(\det B)^{\frac13}} \, (\lambda_i^2 - \lambda_j^2), \quad \qquad \text{$g_{ij}^{\Aif,1}$ is continuous for $(\lambda_i, \lambda_j) \in \R^+ \times \R^+$} \, .
	\end{align}
\end{example}
\begin{example} \label{exampleA2}
Consider the Cauchy-elastic law $\sigma^{\iso}(B) = \frac{B}{(\det B)^{\frac13}} - \left(\frac{B}{(\det B)^{\frac13}}\right)^{-1}$. First we use the Cayley-Hamilton formula for $(3 \times 3)$-matrices
	\begin{align}
	B^3 - \tr(B) \, B^2 + \tr(\Cof B) \, B - \det B \, \id = 0
	\end{align}
and multiply it by $B^{-1}$ to obtain an alternate expression for $B^{-1}$ in terms of non-negative exponents of $B$:
	\begin{equation}
	B^2 - \tr(B) \, B + \tr(\Cof B) \, \id - \det B \, B^{-1} = 0 \quad \iff \quad B^{-1} = \frac{1}{\det B} \, (B^2 - \tr(B) \, B + \tr(\Cof B) \, \id) \, .
	\end{equation}
Rewriting the Aifantis spin $\Omega^{\Aif}$ for this choice of $\sigma^{\iso}(B)$ we obtain the spin tensor
	\begin{align}
	\Omega^{\Aif,2} &= W + \zeta \, (\sigma^{\iso} \, D - D \, \sigma^{\iso}) = W + \zeta \, \left(\left(\frac{B}{(\det B)^{\frac13}}-\left(\frac{B}{(\det B)^{\frac13}}\right)^{-1}\right) \, D - D \, \left(\frac{B}{(\det B)^{\frac13}}-\left(\frac{B}{(\det B)^{\frac13}}\right)^{-1}\right)\right) \notag \\
	&= W + \frac{\zeta}{(\det B)^{\frac13}} \, (B \, D - D \, B) + \zeta \, (\det B)^{\frac13} \, (-B^{-1} \, D + D \, B^{-1}) \notag \\
	&= W + \frac{\zeta}{(\det B)^{\frac13}} \, [B, D] + \frac{\zeta}{(\det B)^{\frac23}} \, (-B^2 \, D + \tr(B) \, B \, D - \tr(\Cof B) \, D + D \, B^2 - \tr(B) \, D \, B + \tr(\Cof B) \, D) \notag \\
	\label{eqaifantis02}
	&= W + \frac{\zeta}{(\det B)^{\frac13}} \, [B, D] + \frac{\zeta}{(\det B)^{\frac23}} \, (-[B^2,D] + \tr(B) \, [B,D]) \\
	&= W + \zeta \, \left(\frac{1}{(\det B)^{\frac13}} + \frac{\tr(B)}{(\det B)^{\frac23}}\right) \, [B, D] - \zeta \, \frac{1}{(\det B)^{\frac23}} \, [B^2,D] \notag \\
	&= W + 2 \, \zeta \, \left(\frac{1}{(\det B)^{\frac13}} + \frac{\tr(B)}{(\det B)^{\frac23}}\right) \, \sk(B \, D) - 2 \, \zeta \, \frac{1}{(\det B)^{\frac23}} \, \sk(B^2 \, D) \notag
	\end{align}
which in view of the representation formula \eqref{eqsecondhalf} is equivalent to setting
	\begin{align}
	\nu_1 = 2 \, \zeta \, \left(\frac{1}{(\det B)^{\frac13}} + \frac{\tr(B)}{(\det B)^{\frac23}}\right) > 0, \qquad \nu_2 = - 2 \, \zeta \, \frac{1}{(\det B)^{\frac23}} < 0, \qquad \nu_3 = 0, \qquad \text{for} \; \zeta > 0.
	\end{align}
\end{example}
showing that $\frac{\DD^{\Aif,2}}{\DD t}$ is not totally positive.\\
For the determination of $g_{ij}^{\Aif, 2}(\lambda_i, \lambda_j)$ we first note that by \eqref{eqaifantis02} we have
	\begin{align}
	\widetilde \Upsilon(V,D) = \zeta \, \left(\frac{1}{(\det B)^{\frac13}} \, (B \, D - D \, B) - (\det B)^{\frac13} \, (B^{-1} \, D - D \, B^{-1}) \right).
	\end{align}
In analogy to \eqref{eqkorob02} we obtain
	\begin{align}
	\label{eqkorob03}
	B^{-1} \, D - D \, B^{-1} = \sum_{i \neq j = 1}^m (\lambda_i^{-2} - \lambda_j^{-2}) \, V_i \, D \, V_j \, .
	\end{align}
Using the expressions \eqref{eqkorob02} and \eqref{eqkorob03} then leads to
	\begin{align}
	g_{ij}^{\Aif, 2} = \zeta \, \big[(\det B)^{-\frac13} \, (\lambda_i^2 - \lambda_j^2) + (\det B)^{\frac13} \, (\lambda_j^{-2} - \lambda_i^{-2})\big], \quad \text{$g_{ij}^{\Aif,2}$ is continuous for $(\lambda_i, \lambda_j) \in \R^+ \times \R^+$} \, .
	\end{align}
\begin{rem}
	Note that the scalar functions $g_{ij}^{\Aif,1}$ and $g_{ij}^{\Aif,2}$ are represented in a form (see (16) in \cite{korobeynikov2011}), which provides sufficient conditions for the continuity of the tensorial functions $\widetilde \Upsilon^{\Aif,1}(V,D)$ and $\widetilde \Upsilon^{\Aif,2}(V,D)$ with respect to $V$ (cf. Theorem 2.2 in \cite{korobeynikov2011}).
\end{rem}
\subsection{Explicit expressions for corotational rates of the left Cauchy--Green deformation tensor associated with the material spin tensors}
We have the following expression for the tensor $\frac{\DD^{\ZJ}}{\DD t}[B]$ (see $\eqref{eqBforZJGN}_1$)
	\begin{equation}
	\label{17}
	  \frac{\DD^{\ZJ}}{\DD t}[B] =B \, D + D \, B.
	\end{equation}
Then, from the definition of the material spin tensors \eqref{11} and the relations \eqref{10} and \eqref{17}, we have
	\begin{equation}
	\label{18}
	 \frac{\DD^{\circ}}{\DD t}[B] = \frac{\DD^{\ZJ}}{\DD t}[B] + B\, \widetilde \Upsilon({V}, D) - \widetilde \Upsilon({V}, D)\, B.
	\end{equation}
Using expressions $\eqref{7}_3$, $\eqref{12}_1$, and the eigenprojection properties \eqref{6}, we deduce
	\begin{equation}
	\label{19}
	\begin{alignedat}{2}
	  B\, \widetilde \Upsilon - \widetilde \Upsilon\, B &= (\sum^m_{k=1}\lambda_k^2 \,  {V}_k)\,(\sum_{i\neq j=1}^{m}g_{ij} \,  {V}_i \,  D \, {V}_j)-
	  (\sum_{i\neq j=1}^{m}g_{ij} \,  {V}_i \,  D \, {V}_j) \, (\sum^m_{k=1}\lambda_k^2 \, {V}_k) \\
	  &=\sum_{i\neq j=1}^{m}g_{ij} \, \lambda_i^2 \, {V}_i\, D \, {V}_j - \sum_{i\neq j=1}^{m}g_{ij} \, \lambda_j^2 \, {V}_i \,  D \, {V}_j=
	  \sum_{i\neq j=1}^{m}g_{ij} \, (\lambda_i^2 - \lambda_j^2) \, {V}_i\, D \, {V}_j.
	\end{alignedat}
	\end{equation}
We have from \eqref{17}, \eqref{18}, and \eqref{19}
	\begin{equation}
	\label{20}
	  \boxed{\frac{\DD^{\circ}}{\DD t}[B] = B \, D + D \, B + \sum_{i\neq j=1}^{m}g_{ij} \, (\lambda_i^2 - \lambda_j^2) \, {V}_i\, D \, {V}_j},
	\end{equation}
or, in alternative form
	\begin{equation}
	\label{21}
	 \frac{\DD^{\circ}}{\DD t}[B]= \mathbb{A}^{\circ}(B). D
	\end{equation}
where $\mathbb{A}^{\circ}(B)$ is a minor and major symmetric (cf.~\eqref{eqminorsymA} and Proposition \ref{propmajorsymA}) fourth-order stiffness tensor, alternatively written as
	\begin{equation}
	\label{22}
	  \mathbb{A}^{\circ}(B) := B\!\overset{\text{sym}}{\otimes}\!\id + \id\!\overset{\text{sym}}{\otimes}\! B +
	  \sum_{i\neq j=1}^{m} \, g_{ij} \, (\lambda_i^2 - \lambda_j^2) \, {V}_i\!\overset{\text{sym}}{\otimes}\!{V}_j.
	\end{equation}
The symmetry properties of the tensor $\mathbb{A}^{\circ}(B)$ follow also from Theorem 2.1 in \cite{korobeynikov2018}.

Next, we transform the term $B \, D + D \, B$ using the expressions \eqref{7} and \eqref{9-a} and the eigenprojection properties in \eqref{6} 
	\begin{equation}
	\label{23}
	\begin{alignedat}{2}
	  B \,  D + D \, B &= (\sum^m_{i=1}\lambda_i^2 \, {V}_i)\, (\sum_{k=1}^{m} \frac{\dot{\lambda}_k}{\lambda_k} \, {V}_k + \sum_{k\neq l=1}^{m} {V}_k\,  D\,{V}_l) + (\sum_{k=1}^{m} \frac{\dot{\lambda}_k}{\lambda_k} \, {V}_k + \sum_{k\neq l=1}^{m} {V}_k\,  D\,{V}_l)\, (\sum^m_{i=1}\lambda_i^2 \, {V}_i) \\
	  &= 2\sum^m_{i=1}\lambda_i \, \dot{\lambda}_i \, {V}_i + \sum_{k\neq l=1}^{m}\lambda_k^2 \, {V}_k\,  D\,{V}_l + \sum_{k\neq l=1}^{m}\lambda_l^2 \, {V}_k\,  D\,{V}_l \\
	&= 2\sum^m_{i=1}\lambda_i \, \dot{\lambda}_i \, {V}_i + \sum_{i\neq j=1}^{m}(\lambda_i^2 + \lambda_j^2) \, {V}_i\,  D\,{V}_j.
	\end{alignedat}
	\end{equation}
We rewrite the expression \eqref{20} using \eqref{23} and get
	\begin{equation}
	\label{24}
	 \frac{\DD^{\circ}}{\DD t}[B] = 2 \sum^m_{i=1}\lambda_i \, \dot{\lambda}_i \, {V}_i + \sum_{i\neq j=1}^{m}[\lambda_i^2 + \lambda_j^2 + g_{ij} \, (\lambda_i^2 - \lambda_j^2)] \, {V}_i\, D \, {V}_j.
	\end{equation}
\subsection{Positivity of corotational rates associated with material spin tensors}
We intend to obtain an alternative explicit form of
	\begin{equation}\label{25}
	  \langle \frac{\DD^{\circ}}{\DD t}[B], D \rangle =  \langle \mathbb{A}^{\circ}(B).D, D \rangle.
	\end{equation}
Note that the first summand in the r.h.s.~of \eqref{24} is coaxial with the tensor ${V}$. Denoting
	\begin{equation}
	\label{26}
	  z_{ij} := \lambda_i^2 + \lambda_j^2 + g_{ij} \, (\lambda_i^2 - \lambda_j^2),
	\end{equation}
we have $z_{ji}=z_{ij}$ since $g_{ij} = -g_{ji}$. Then using Theorem 2.2 in \cite{korobeynikov2018}, we claim that the second summand on the r.h.s.~of \eqref{24} is orthogonal to the tensor ${V}$. Note also that the first summand on the r.h.s.~of \eqref{9-a} is coaxial with the tensor ${V}$ and the second one is orthogonal to this tensor. Hence, by using coaxiality and orthogonality, we can write
	\begin{equation}
	\label{27}
	  \langle \frac{\DD^{\circ}}{\DD t}[B], D \rangle= 2 \, \langle \sum_{k=1}^{m} \frac{\dot{\lambda}_k}{\lambda_k} \, {V}_k , \sum^m_{i=1}\lambda_i \, \dot{\lambda}_i \, {V}_i \rangle + \langle \sum_{k\neq l=1}^{m} {V}_k\,  D\,{V}_l, \sum_{i\neq j=1}^{m}z_{ij} \, {V}_i\, D \, {V}_j \rangle.
	\end{equation}
We rewrite the expressions on the r.h.s.~of \eqref{27} using principal axes instead of eigenprojections \break ($d_{ij} = \langle n_i, D . n_j \rangle$) which yields
	\begin{equation}
	\label{28}
	  \langle \frac{\DD^{\circ}}{\DD t}[B], D \rangle= 2 \, \langle \sum_{k=1}^3 \frac{\dot{\lambda}_k}{\lambda_k} \, {n}_k\otimes {n}_k, \sum^3_{i=1}\lambda_i \, \dot{\lambda}_i \, {n}_i\otimes {n}_i \rangle + \langle \sum_{k\neq l=1}^{3} d_{kl} \, {n}_k\otimes {n}_l, \sum_{i\neq j=1}^3 z_{ij} \, d_{ij} \, {n}_i\otimes {n}_j\rangle.
	\end{equation}
Then we use the property of contractions ($\delta_{ij}$ are the Kronecker symbols)
	$
	  \langle {n}_i\otimes {n}_j, {n}_k\otimes {n}_l \rangle = \delta_{ik} \, \delta_{jl},
	$
and obtain the final expression
	\begin{equation}\label{30}
	\langle \mathbb{A}^{\circ}(B).D, D \rangle =  \langle \frac{\DD^{\circ}}{\DD t}[B], D \rangle= 2\sum_{i=1}^3 \dot{\lambda}_i^2 +  \sum_{i\neq j=1}^3 z_{ij} \, d_{ij}^2 = 2 \, \sum_{i=1}^3  \lambda_i^2 \, d_{ii}^2 +  \sum_{i\neq j=1}^3 z_{ij} \, d_{ij}^2  \, .
	\end{equation}
Since $D = (d_{ij})$ is arbitrary, we infer from \eqref{30} the \textbf{necessary and sufficient conditions} for the positive definiteness of objective corotational rates associated with material spin tensors
	\begin{equation}
	\label{31}
	  \boxed{z_{ij}>0\ (i,j=1,2,3)}.
	\end{equation}
Suppose that the scalar function $g(\lambda_i, \lambda_j)$ can be rewritten as
	\begin{align}
	\label{eqkorob04}
	g(\lambda_i, \lambda_j) = g(\mcZ), \quad \mcZ \colonequals \frac{\lambda_i}{\lambda_j} \, .
	\end{align}
Then from \eqref{13}--\eqref{16} we obtain the following expressions for the scalar functions of classical corotational rates:
	\begin{align}
	\label{eqkorob05}
	g^{\ZJ}(\mcZ) = 0,  \quad \qquad g^{\log}(\mcZ) = \frac{1 + \mcZ^2}{1 - \mcZ^2} + \frac{1}{\log \mcZ},  \quad \qquad g^{\GN}(\mcZ) = \frac{1-\mcZ}{1 + \mcZ}, \quad \qquad g^{\GS}(\mcZ) = \frac{1 + \mcZ^2}{1- \mcZ^2} \, .
	\end{align}
Note that the scalar functions $g_{ij}^{\Aif, 1}$ and $g_{ij}^{\Aif, 2}$ cannot be written in the form \eqref{eqkorob05}. 

Assuming that the representation \eqref{eqkorob04} exists, we rewrite the quantities in \eqref{26} as follows
	\begin{align}
	z_{ij} = \lambda_j^2 \, [\mcZ^2 + 1 + g(\mcZ) \, (\mcZ^2-1)] = \lambda_j^2 \, \overline g(\mcZ),
	\end{align}
where
	\begin{align}
	\overline g(\mcZ) = \mcZ^2 + 1 + g(\mcZ) \, (\mcZ^2-1), \qquad \text{and} \qquad \mcZ := \frac{\lambda_i}{\lambda_j}.
	\end{align}
Then, the necessary and sufficient conditions can be rewritten as
	\begin{align}
	\label{eqkorob06}
	\boxed{\overline g(\mcZ) > 0 \qquad \forall \, \mcZ > 0,}
	\end{align}
and \textbf{invertibility} of $\mathbb{A}^{\circ}(B)$ is satisfied if and only if $z_{ij} \neq 0, (i,j = 1,2,3)$ or $\overline g(\mcZ) \neq 0 \; \forall \, \mcZ > 0$.
\subsection{Testing some classical corotational rates for positivity}

We apply the necessary and sufficient conditions \eqref{31} and \eqref{eqkorob06} to classical rates based on spin tensors from the family of material spins  ($i,j=1,2,3$).
\begin{enumerate}
  \item For the Zaremba--Jaumann rate $\frac{\DD^{\ZJ}}{\DD t}$ from \eqref{13}, \eqref{26} and \eqref{eqkorob05} we have\\
	\begin{equation}\label{32}
	  z_{ij}^{\ZJ}=\lambda_i^2 + \lambda_j^2>0 \qquad \text{or} \qquad \overline g^{\ZJ}(\mcZ) = \mcZ^2 + 1 > 0.
	\end{equation}
  \item For the Green--Naghdi rate $\frac{\DD^{\GN}}{\DD t}$ from \eqref{14}, \eqref{26} and \eqref{eqkorob05} we have\\
	\begin{equation}\label{33}
	  z_{ij}^{\GN}=2 \, \lambda_i \, \lambda_j>0 \qquad \text{or} \qquad \overline g^{\GN}(\mcZ) = 2 \, \mcZ > 0.
	\end{equation}
  \item For the logarithmic rate $\frac{\DD^{\log}}{\DD t}$ from \eqref{15}, \eqref{26} and \eqref{eqkorob05} we have\\
	\begin{equation}\label{34}
	    z_{ij}^{\log}= \lambda_j^2(y_{ij}+1)\frac{y_{ij}-1}{\log y_{ij}}>0,\qquad y_{ij}\coloneqq\frac{\lambda_i}{\lambda_j} \qquad \text{or} \qquad \overline g^{\log}(\mcZ) = \frac{ \mcZ^2 - 1}{\log \mcZ} > 0.
	\end{equation}
	which can be rewritten as
	\begin{equation*}
		z_{ij}^{\log} = \frac{\lambda_i^2 - \lambda_j^2}{\log \lambda_i - \log \lambda_j} > 0 \qquad \text{or} \qquad \overline g^{\log}(\mcZ) = \frac{ \mcZ^2 - 1}{\log \mcZ} > 0.
	\end{equation*}
  \item For the Gurtin--Spear rate $\frac{\DD^{\GS}}{\DD t}$ from \eqref{16}, \eqref{26} and \eqref{eqkorob05} we have\\
	\begin{equation}\label{35}
	    z_{ij}^{\GS}=0 \qquad \text{and} \qquad \overline g^{\GS}(\mcZ) = 0.
	\end{equation}
\end{enumerate}
We can plot these functions in accordance to Figure 1 in \cite{korobeynikov2011} (cf.~Figure \ref{fig2}).

	\begin{figure}[h!]
		\begin{center}		
		\begin{minipage}[h!]{0.4\linewidth}
			\centering
			\includegraphics[scale=0.25]{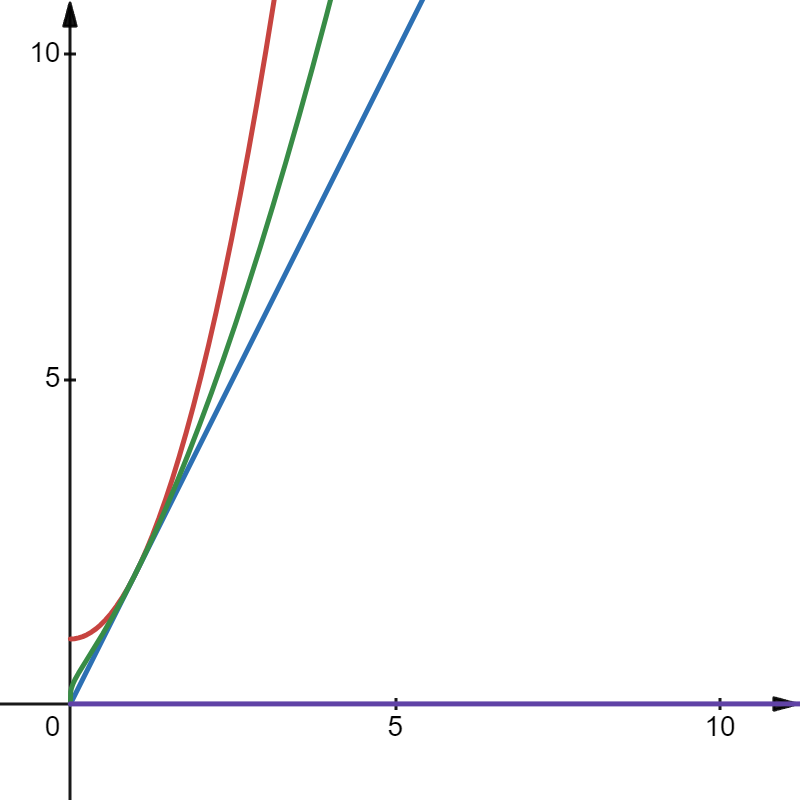}
			\put(-205,195){\footnotesize $\overline g(\mcZ)$}
			\put(-180,205){\footnotesize $\red{\overline g^{\ZJ}(\mcZ) = \mcZ^2+1}$}
			\put(-150,218){\footnotesize $\teal{\overline g^{\log}(\mcZ) = \frac{\mcZ^2-1}{\log \mcZ}}$}
			\put(-100,170){\footnotesize $\blue{\overline g^{\GN}(\mcZ) = 2 \, \mcZ}$}
			\put(-120,30){\footnotesize $\violet{\overline g^{\GS}(\mcZ) = 0}$}
			\put(-10,10){\footnotesize $\mcZ$}
		\end{minipage} \qquad
		\begin{minipage}[h!]{0.4\linewidth}
			\centering
			\vspace*{30pt}
			\includegraphics[scale=0.25]{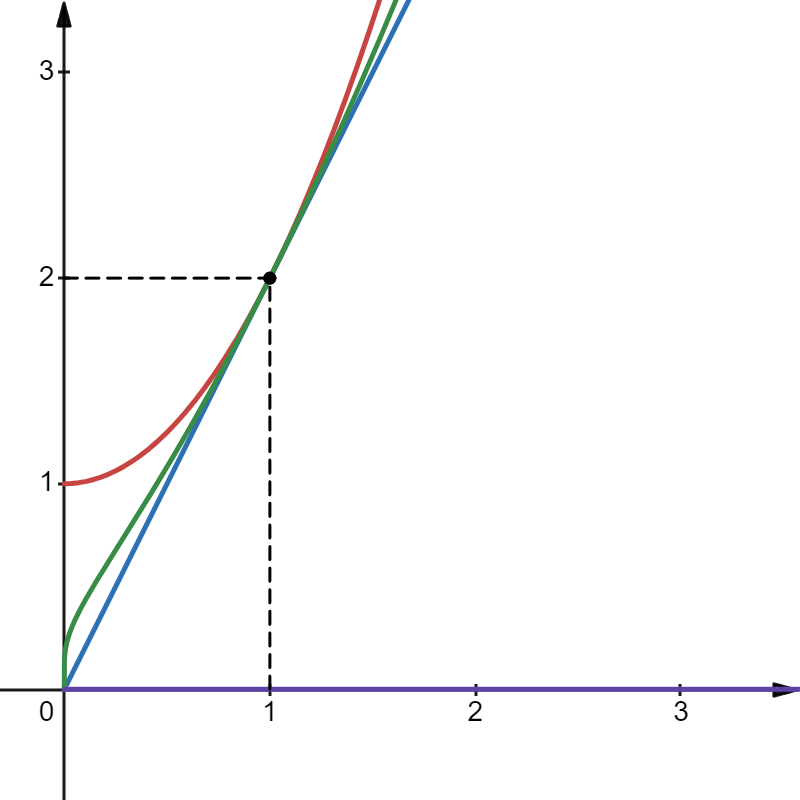}
			\put(-10,14){\footnotesize $\mcZ$}
			\put(-180,190){\footnotesize $\overline g(\mcZ)$}
		\end{minipage}
		\caption{Illustration of the characteristic functions $\overline g(\mcZ)$ from \eqref{eqkorob05}. We observe that the positive corotational rates found so far all have a characteristic function $\overline g(\mcZ) > 0$ which satisfies $\overline g(1) = 2$ and $\overline g'(1) = 2$ as well as convexity for $\mcZ \ge 1$.}
		\label{fig2}
		\end{center}
	\end{figure}

\noindent We conclude from \eqref{32}-\eqref{35} that the necessary and sufficient conditions for the positive definiteness are satisfied for the Zaremba-Jaumann, Green-Naghdi, and logarithmic rates. However, the Gurtin-Spear rate is not positive definite due to the possibility that
	\begin{align}
	\langle \frac{\DD^{\GS}}{\DD t}[B], D \rangle = \langle \mathbb{A}^{\GS}(B).D, D \rangle = 0	
	\end{align}
for some movements with $ D\neq{0}$ (for example, for some movements with $d_{ii}=0$ and $d_{ij}\neq0\ i\neq j$). Since $\mathbb{A}^{\GS}(B)$ is major symmetric, this implies that $\mathbb{A}^{\GS}(B).D = 0$ for some $D \in \Sym(3) \! \setminus \! \{0\}$. Hence, $\mathbb{A}^{\GS}(B)$ is \textbf{not invertible} as already observed by Xiao et al.~\cite[eq.~(4.68)]{xiao98_1}. Note that the Gurtin--Spear rate is associated with the twirl tensor of the Eulerian triad $\Omega^{\GS}=\dot Q^E \, (Q^E)^T$ that does not belong to the family of continuous spin tensors (cf., \cite{korobeynikov2011}).
%
\begin{enumerate}
\item[5] For the Aifantis rate $\frac{\DD^{\Aif, 1}}{\DD t}$ from Example \ref{exampleA1} we have
	\begin{align}
	z_{ij}^{\Aif,1} = \lambda_i^2 + \lambda_j^2 + \frac{\zeta}{(\det B)^{\frac13}} \, (\lambda_i^2 - \lambda_j^2)^2 > 0, \qquad \text{if} \quad \zeta \ge 0 \,.
	\end{align}
It follows
that a sufficient condition for the positivity of this rate is the condition $\zeta \ge 0$.
\item[6] For the Aifantis rate $\frac{\DD^{\Aif,2}}{\DD t}$ from Example \ref{exampleA2} we have
	\begin{equation}
	\begin{alignedat}{2}
	z_{ij}^{\Aif,2} &= \lambda_i^2 + \lambda_j^2 + \zeta \, \left[(\det B)^{-\frac13} \, (\lambda_i^2 - \lambda_j^2)^2 + (\det B)^{\frac13} \, \left(-2 + \frac{\lambda_i^2}{\lambda_j^2} + \frac{\lambda_j^2}{\lambda_i^2} \right)\right] \\
	\overset{\mcZ = \frac{\lambda_i}{\lambda_j}}&{=} \lambda_i^2 + \lambda_j^2 + \zeta \, \bigg[(\det B)^{-\frac13} \, (\lambda_i^2 - \lambda_j^2)^2 + (\det B)^{\frac13} \, \bigg(\underbrace{\mcZ^2 + \frac{1}{\mcZ^2} - 2}_{\ge 0}\bigg)\bigg] > 0, \quad \text{if} \quad \zeta \ge 0 \, .
	\end{alignedat}
	\end{equation}
We observe once again that the rate $\frac{\DD^{\Aif, 2}}{\DD t}$ is positive if $\zeta \ge 0$.
\end{enumerate}
\begin{rem}
	We have not been able to find a corotational rate in the class of continuous material spins that is only invertible but not positive. Clearly,
	\begin{equation*}
		\det \mathbb{A}^{\circ}(B) \neq 0 \quad \text{and} \quad \mathbb{A}^{\circ}(B) \in \Sym^{++}_4(6)
	\end{equation*}
	are different conditions, but $\mathbb{A}^{\circ}(B)$ is not arbitrary but constraint by the representation \eqref{eqrepresA}. However, it is too early to conjecture that invertible rates coincide with positive rates.
\end{rem}

\section{Conclusion}
While all objective tensor rates $\frac{\DD^{\sharp}}{\DD t}$ can be used to formulate equivalent rate-form equations of a given hyper- or Cauchy-elastic model by using the corresponding induced tangent stiffness tensor (cf.~\cite{CSP2024}), only the corotational rates $\frac{\DD^{\circ}}{\DD t}$ disclose the underlying physical mechanisms and satisfy a number of structure preserving geometric properties and should therefore be preferred in the modelling of rate-form equilibrium problems. 

Among all corotational rates it is still possible to single out a meaningful subclass of what we call invertible corotational rates and positive corotational rates. Necessary and sufficient conditions for both classes are derived. Well-known corotational derivatives such as the Zaremba-Jaumann, the Green-Naghdi or the logarithmic derivative already belong to the newly defined class of positive corotational rates. 

In a future contribution we venture to generalize the main result of \cite{tobedone, CSP2024}, i.e.~that the corotational stability postulate (CSP) is equivalent to the strong {\bf T}rue-{\bf S}tress-{\bf T}rue-{\bf S}train-{\bf M}onotonicity (TSTS-M$^{++}$) \textbf{for all positive corotational rates $\frac{\DD^{\circ}}{\DD t}$}, meaning that for the isotropic Cauchy stress $\widehat \sigma(\log B) := \sigma(B)$
	\begin{align*}
	\langle \frac{\DD^{\circ}}{\DD t}[\sigma], D \rangle > 0 \quad \forall \, D \in \Sym(3) \! \setminus \! \{0\} \quad \text{(CSP)} \qquad \iff \qquad \sym \, \DD_{\log B} \widehat \sigma(\log B) \in \Sym^{++}_4(6) \quad \text{(TSTS-M$^{++}$)},
	\end{align*}
which implies the monotonicity in the logarithmic strain tensor $\log B$
	\begin{align*}
	\langle \widehat \sigma(\log B_1) - \widehat \sigma(\log B_2), \log B_1 - \log B_2 \rangle > 0 \qquad \text{(TSTS-M$^+$)},
	\end{align*}
conferring to the CSP a far reaching generality and highlighting again the special role played by the logarithmic strain tensor in setting up constitutive equations (cf.~\cite{NeffGhibaLankeit,NeffGhibaPoly}). Here, the (TSTS-M$^{+}$) condition is a strong candidate for \textit{stress increases with strain} in isotropic nonlinear elasticity.\\

\begingroup
\footnotesize

\bibliographystyle{plain} 
\bibliography{Leblondrefs, Korobeynikov_positive_2024}

\begin{thebibliography}{10}

\bibitem{asghari2008}
M.~Asghari and R.~Naghdabadi.
\newblock On the objective corotational rates of {E}ulerian strain measures.
\newblock {\em Journal of Elasticity}, 90:175--207, 2008.

\bibitem{Aubram2017}
D.~Aubram.
\newblock Notes on rate equations in nonlinear continuum mechanics.
\newblock {\em to appear in Mathematics and Mechanics of Solids;
  (arXiv:1709.10048)}, 2024.

\bibitem{bellini2015}
C.~Bellini and S.~Federico.
\newblock Green-{N}aghdi rate of the {K}irchhoff stress and deformation rate:
  the elasticity tensor.
\newblock {\em Zeitschrift für Angewandte Mathematik und Physik},
  66(3):1143--1163, 2015.

\bibitem{Bertram2021}
A.~Bertram.
\newblock {\em Elasticity and {P}lasticity of {L}arge {D}eformations}.
\newblock Springer, Cham, 4th edition, 2021.

\bibitem{biezeno1928}
C.~B. Biezeno and H.~Hencky.
\newblock On the general theory of elastic stability.
\newblock {\em Koninklijke Akademie van Wettenschappen te Amsterdam},
  31:569--592, 1928.

\bibitem{bruhns2004}
O.~T. Bruhns, A.~Meyers, and H.~Xiao.
\newblock On non-corotational rates of {O}ldroyd's type and relevant issues in
  rate constitutive formulations.
\newblock {\em Proceedings: Mathematical, Physical and Engineering Sciences},
  460(2043):909--928, 2004.

\bibitem{Cotter1955TENSORSAW}
B.~A. Cotter and R.~S. Rivlin.
\newblock Tensors associated with time-dependent stress.
\newblock {\em Quarterly of Applied Mathematics}, 13:1036--1041, 1955.

\bibitem{Curnier1994}
A.~Curnier.
\newblock {\em Computational Methods in Solid Mechanics}.
\newblock Kluwer, Dordrecht, 1994.

\bibitem{tobedone}
M.~V. d'Agostino, S.~Holthausen, D.~Bernardini, A.~Sky, and P.~Neff.
\newblock A constitutive condition for idealized isotropic {C}auchy elasticity
  involving the logarithmic strain.
\newblock {\em to appear in: Journal of Elasticity, arXiv:2409.01811}, 2024.

\bibitem{daleckii1965}
J.~L. Daleckii and S.~G. Krein.
\newblock Integration and differentiation of functions of hermitian operators
  and applications to the theory of perturbations.
\newblock {\em American Mathematical Society Translations: Series 2}, 47:1--30,
  1965.

\bibitem{dienes1979}
J.~K. Dienes.
\newblock On the analysis of rotation and stress rate in deforming bodies.
\newblock {\em Acta Mechanica}, 32:217--232, 1979.

\bibitem{dienes1987discussion}
J.~K. Dienes.
\newblock A discussion of material rotation and stress rate.
\newblock {\em Acta Mechanica}, 65:1--11, 1987.

\bibitem{doyle1956}
T.~C. Doyle and J.~L. Ericksen.
\newblock Nonlinear elasticity.
\newblock {\em Advances in Applied Mechanics}, 4:53--115, 1956.

\bibitem{fiala2009}
Z.~Fiala.
\newblock Is the logarithmic time derivative simply the {Z}aremba-{J}aumann
  derivative?
\newblock {\em Engineering Mechanics, National Conference with International
  Participation, Svratka, Czech Republic, May 11-14}, 211:227--240, 2009.

\bibitem{fiala2016}
Z.~Fiala.
\newblock Geometry of finite deformations and time-incremental analysis.
\newblock {\em International Journal of Non-Linear Mechanics}, 81(1), 2016.

\bibitem{fiala2020objective}
Z.~Fiala.
\newblock Objective time derivatives revised.
\newblock {\em Zeitschrift f{\"u}r angewandte Mathematik und Physik}, 71(1):4,
  2020.

\bibitem{ghavam2007}
K.~Ghavam and R.~Naghdabadi.
\newblock Spin tensors associated with corotational rates and corotational
  integrals in continua.
\newblock {\em International Journal of Solids and Structures}, 44:5222--5235,
  2007.

\bibitem{govindjee1997}
S.~Govindjee.
\newblock Accuracy and stability for integration of {J}aumann stress rate
  equations in spinning bodies.
\newblock {\em Engineering Computations}, 14(1):14--30, 1997.

\bibitem{Green_McInnis_1967}
A.~E. Green and B.~C. McInnis.
\newblock Generalized hypo-elasticity.
\newblock {\em Proceedings of the Royal Society of Edinburgh. Section A.
  Mathematical and Physical Sciences}, 67(3):220–230, 1967.

\bibitem{Green1965}
A.~E. Green and P.~M. Naghdi.
\newblock A general theory of an elastic-plastic continuum.
\newblock {\em Archive for Rational Mechanics and Analysis}, 18(4):251--281,
  1965.

\bibitem{guo63}
Z.~H. Guo.
\newblock Time derivatives of tensor fields in nonlinear continuum mechanics.
\newblock In {\em Archiwum Mechaniki Stosowanej}, volume~15, pages 131--163.
  Polish Scientific Publishers, 1963.

\bibitem{Gurtin2010}
M.~Gurtin, E.~Fried, and L.~Anand.
\newblock {\em The Mechanics and Thermodynamics of Continua}.
\newblock Cambridge: Cambridge University Press., 2010.

\bibitem{gurtin1983relationship}
M.~E. Gurtin and K.~Spear.
\newblock On the relationship between the logarithmic strain rate and the
  stretching tensor.
\newblock {\em International Journal of Solids and Structures}, 19(5):437--444,
  1983.

\bibitem{hashiguchi2009elastoplasticity}
K.~Hashiguchi.
\newblock {\em Elastoplasticity Theory}, volume~42.
\newblock Springer, 2009.

\bibitem{hill1970constitutive}
R.~Hill.
\newblock Constitutive inequalities for isotropic elastic solids under finite
  strain.
\newblock {\em Proceedings of the Royal Society A: Mathematical, Physical and
  Engineering Sciences}, 314(1519):457--472, 1970.

\bibitem{hill1978}
R.~Hill.
\newblock Aspects of invariance in solid mechanics.
\newblock {\em Advances in Applied Mechanics}, 18:1--75, 1978.

\bibitem{holzapfel2000}
G.~A. Holzapfel.
\newblock {\em {Nonlinear Solid Mechanics: A Continuum Approach for
  Engineering}}.
\newblock Wiley, 2000.

\bibitem{horn2013}
R.~A. Horn and C.~R. Johnson.
\newblock {\em Matrix Analysis}.
\newblock Cambridge University Press, New York, 2nd edition, 2013.

\bibitem{jaumann1905}
G.~Jaumann.
\newblock {\em {Die Grundlagen der Bewegungslehre von einem modernen
  Standpunkte aus dargestellt.}}
\newblock Johann Ambrosius Barth, Leipzig, 1905.

\bibitem{jaumann1911geschlossenes}
G.~Jaumann.
\newblock {G}eschlossenes {S}ystem physikalischer und chemischer
  {D}ifferentialgesetze.
\newblock {\em Sitzungsberichte der Mathematisch-Naturwissenschaftlichen Classe
  der Kaiserlichen Akademie der Wissenschaften Wien 2a}, 120:385--530, 1911.

\bibitem{kolev2024objective}
B.~Kolev and R.~Desmorat.
\newblock Objective rates as covariant derivatives on the manifold of
  riemannian metrics.
\newblock {\em Archive for Rational Mechanics and Analysis}, 248(4):66, 2024.

\bibitem{korobeynikovbook2000}
S.~N. Korobeynikov.
\newblock {\em {Nonlinear Strain Analysis of Solids (monograph, 262 pages)}}.
\newblock Novosibirsk, Siberian Branch of the Russian Academy of Sciences
  (editor), 2000 (in Russian).

\bibitem{korobeynikov2008objective}
S.~N. Korobeynikov.
\newblock Objective tensor rates and applications in formulation of
  hyperelastic relations.
\newblock {\em Journal of Elasticity}, 93(2):105--140, 2008.

\bibitem{korobeynikov2011}
S.~N. Korobeynikov.
\newblock Families of continuous spin tensors and applications in continuum
  mechanics.
\newblock {\em Acta Mechanica}, 216(1--4):301--332, 2011.

\bibitem{korobeynikov2018}
S.~N. Korobeynikov.
\newblock Basis-free expressions for families of objective strain tensors,
  their rates, and conjugate stress tensors.
\newblock {\em Acta Mechanica}, 229(3):1061--1098, 2018.

\bibitem{KorobeynikovAM2023}
S.~N. Korobeynikov.
\newblock Discussion of ``{T}he general basis-free spin and its concise proof''
  by {M}eng and {C}hen, {A}cta {M}ech.,
\newblock {\em Acta Mechanica}, 234(2):825--829, 2023.

\bibitem{korobeynikov2023}
S.~N. Korobeynikov.
\newblock Families of {H}ooke-like isotropic hyperelastic material models and
  their rate formulations.
\newblock {\em Archive of Applied Mechanics}, 93:3863--3893, 2023.

\bibitem{korobeynikov2023book}
S.~N. Korobeynikov and A.~Larichkin.
\newblock {\em Objective Algorithms for Integrating Hypoelastic Constitutive
  Relations Based on Corotational Stress Rates}.
\newblock Springer Cham, 2023.

\bibitem{korobeynikov2024}
S.~N. Korobeynikov and A.~Y. Larichkin.
\newblock Simulating body deformations with initial stresses using {H}ooke-like
  isotropic hypoelasticity models based on corotational stress rates.
\newblock {\em Zeitschrift für Angewandte Mathematik und Mechanik}, 104(2),
  e202300568, 2024.

\bibitem{LankeitNeffNakatsukasa}
J.~Lankeit, P.~Neff, and Y.~Nakatsukasa.
\newblock The minimization of matrix logarithms: {O}n a fundamental property of
  the unitary polar factor.
\newblock {\em Linear Algebra and its Applications}, 449(0):28--42, 2014.

\bibitem{lehmann1991}
T.~Lehmann, Z.~H. Guo, and H.~Y. Liang.
\newblock The conjugacy between {C}auchy stress and logarithm of the left
  stretch tensor.
\newblock {\em European Journal of Mechanics A/Solids}, 10:395--404, 1991.

\bibitem{LuehrCMAME1990}
C.~P. Luehr and M.~B. Rubin.
\newblock The significance of projection operators in the spectral
  representation of symmetric second order tensors.
\newblock {\em Computer Methods in Applied Mechanics and Engineering},
  84(3):243--246, 1990.

\bibitem{macmillan1992spin}
E.~H. MacMillan.
\newblock On the spin of tensors.
\newblock {\em Journal of Elasticity}, 27:69--84, 1992.

\bibitem{Marsden83}
J.E. Marsden and J.R. Hughes.
\newblock {\em Mathematical {F}oundations of {E}lasticity.}
\newblock Prentice-Hall, Englewood Cliffs, New Jersey, 1983.

\bibitem{mehrabadi1987}
M.~M. Mehrabadi and S.~Nemat-Nasser.
\newblock Some basic kinematical relations for finite deformations of continua.
\newblock {\em Mechanics of Materials}, 6:127--138, 1987.

\bibitem{metzger1986objective}
D.~R. Metzger and R.~N. Dubey.
\newblock Objective tensor rates and frame indifferent constitutive models.
\newblock {\em Mechanics Research Communications}, 13(2):91--96, 1986.

\bibitem{meyers2000some}
A.~Meyers, P.~Schie{\ss}e, and O.~T. Bruhns.
\newblock {Some comments on objective rates of symmetric Eulerian tensors with
  application to Eulerian strain rates}.
\newblock {\em Acta Mechanica}, 139(1):91--103, 2000.

\bibitem{naghdi1961}
P.~M. Naghdi and W.~L. Wainwright.
\newblock On the time derivative of tensors in mechanics of continua.
\newblock {\em Quarterly of Applied Mathematics}, 19(2):95--109, 1961.

\bibitem{Neff_Osterbrink_Martin_Hencky13}
P.~Neff, B.~Eidel, and R.~J. Martin.
\newblock Geometry of logarithmic strain measures in solid mechanics.
\newblock {\em Archive for Rational Mechanics and Analysis}, 222:507--572,
  2016.

\bibitem{NeffGhibaLankeit}
P.~Neff, I.~D. Ghiba, and J.~Lankeit.
\newblock The exponentiated {H}encky-logarithmic strain energy. {P}art {I}:
  {C}onstitutive issues and rank--one convexity.
\newblock {\em Journal of Elasticity}, 121:143--234, 2015.

\bibitem{NeffGhibaPoly}
P.~Neff, I.~D. Ghiba, J.~Lankeit, R.~J. Martin, and D.J. Steigmann.
\newblock The exponentiated {H}encky-logarithmic strain energy. {P}art {II}:
  {C}oercivity, planar polyconvexity and existence of minimizers.
\newblock {\em Zeitschrift für Angewandte Mathematik und Physik},
  66:1671--1693, 2015.

\bibitem{CSP2024}
P.~Neff., S.~Holthausen, M.~V. d'Agostino, D.~Bernardini, A.~Sky, I.~D. Ghiba,
  and R.~J. Martin.
\newblock Hypo-elasticity, {C}auchy-elasticity, corotational stability and
  monotonicity in the logarithmic strain.
\newblock {\em submitted, arXiv:1234.56789 (dummy)}, 2024.

\bibitem{Neffpolardecomp}
P.~Neff, J.~Lankeit, and A.~Madeo.
\newblock On {G}rioli's minimum property and its relation to {C}auchy's polar
  decomposition.
\newblock {\em International Journal of Engineering Science}, 80:209--217,
  2014.

\bibitem{Neff_Nagatsukasa_logpolar13}
P.~Neff, Y.~Nakatsukasa, and A.~Fischle.
\newblock A logarithmic minimization property of the unitary polar factor in
  the spectral norm and the {Frobenius} matrix norm.
\newblock {\em SIAM Journal on Matrix Analysis and Applications},
  35:1132--1154, 2014.

\bibitem{Noll55}
W.~Noll.
\newblock On the continuity of the solid and fluid states.
\newblock {\em Journal of Rational Mechanics and Analysis}, 4:3--81, 1955.

\bibitem{Norris2008}
A.~N. Norris.
\newblock Eulerian conjugate stress and strain.
\newblock {\em Journal of Mechanics of Materials and Structures},
  3(2):243--260, 2008.

\bibitem{Ogden83}
R.W. Ogden.
\newblock {\em Non-Linear Elastic Deformations.}
\newblock Mathematics and its Applications. Ellis Horwood, Chichester, 1st
  edition, 1983.

\bibitem{oldroyd1950}
J.~G. Oldroyd.
\newblock On the formulation of rheological equation of state.
\newblock {\em Proceedings of the Royal Society of London. Series A,
  Mathematical and Physical Sciences.}, 200:523--541, 1950.

\bibitem{palizi2020consistent}
M.~Palizi, S.~Federico, and S.~Adeeb.
\newblock Consistent numerical implementation of hypoelastic constitutive
  models.
\newblock {\em Zeitschrift f{\"u}r angewandte Mathematik und Physik}, 71:1--23,
  2020.

\bibitem{PeyrautANM2009}
F.~Peyraut, Z.~Q. Feng, Q.~C. He, and N.~Labed.
\newblock Robust numerical analysis of homogeneous and non-homogeneous
  deformations.
\newblock {\em Applied Numerical Mathematics}, 59(7):1499--1514, 2009.

\bibitem{pinsky1983}
P.~M. Pinsky, M.~Ortiz, and K.~S. Pister.
\newblock Numerical integration of rate constitutive equations in finite
  deformation analysis.
\newblock {\em Computer Methods in Applied Mechanics and Engineering},
  40(2):137--158, 1983.

\bibitem{pozdeev1986}
A.~A. Pozdeev, P.~V. Trusov, and Y.~I. Nyashin.
\newblock {\em {Large Elastopastic Strains}}.
\newblock Nauka, Moscow, 1986 (in Russian).

\bibitem{prager1961}
W.~Prager.
\newblock An elementary discussion of definitions of stress rates.
\newblock {\em Quarterly of Applied Mathematics}, 18(4):403--407, 1960.

\bibitem{prager1962}
W.~Prager.
\newblock On higher rates of stress and deformation.
\newblock {\em Journal of the Mechanics and Physics of Solids}, 10(2):133--138,
  1962.

\bibitem{reinhardt1995eulerian}
W.~D. Reinhardt and R.~N. Dubey.
\newblock {E}ulerian strain-rate as a rate of logarithmic strain.
\newblock {\em Mechanics Research Communications}, 22(2):165--170, 1995.

\bibitem{reinhardt1996application}
W.~D. Reinhardt and R.~N. Dubey.
\newblock Application of objective rates in mechanical modeling of solids.
\newblock {\em Journal of Applied Mechanics}, 63:692--698, 1996.

\bibitem{reinhardt1996}
W.~D. Reinhardt and R.~N. Dubey.
\newblock Coordinate-independent representation of spins in continuum
  mechanics.
\newblock {\em Journal of Elasticity}, 42:133--144, 1996.

\bibitem{richter1948isotrope}
H.~Richter.
\newblock Das isotrope {E}lastizitätsgesetz.
\newblock {\em Zeitschrift für Angewandte Mathematik und Mechanik},
  28(7-8):205--209, 1948.

\bibitem{richter1949hauptaufsatze}
H.~Richter.
\newblock {Verzerrungstensor, Verzerrungsdeviator und Spannungstensor bei
  endlichen Formänderungen}.
\newblock {\em Zeitschrift für Angewandte Mathematik und Mechanik},
  29(3):65--75, 1949.

\bibitem{Richter50}
H.~Richter.
\newblock {Zum Logarithmus einer Matrix.}
\newblock {\em Archiv der Mathematik}, 2:360--363, 1950.

\bibitem{Richter52}
H.~Richter.
\newblock {Zur Elastizitätstheorie endlicher Verformungen.}
\newblock {\em Mathematische Nachrichten}, 8:65--73, 1952.

\bibitem{seth1961}
B.~R. Seth.
\newblock {\em Generalized strain measure with applications to physical
  problems.}
\newblock Technical report, Defense Technical Information Center, 1961.

\bibitem{simo2006computational}
J.~C. Simo.
\newblock {\em Computational Inelasticity}.
\newblock Springer Science \& Business Media, 2006.

\bibitem{sowerby1984rotations}
R.~Sowerby and E.~Chu.
\newblock Rotations, stress rates and strain measures in homogeneous
  deformation processes.
\newblock {\em International Journal of Solids and Structures},
  20(11-12):1037--1048, 1984.

\bibitem{szabo1989comparison}
L.~Szab{\'o} and M.~Balla.
\newblock Comparison of some stress rates.
\newblock {\em International Journal of Solids and Structures}, 25(3):279--297,
  1989.

\bibitem{truesdellremarks}
C.~A. Truesdell.
\newblock Remarks on hypo-elasticity.
\newblock {\em Journal of Research of the National Bureau of Standards, Section
  B: Mathematics and Mathematical Physics}, 67B(3), 1963.

\bibitem{truesdell1966}
C.~A. Truesdell.
\newblock {\em {The Elements of Continuum Mechanics.}}
\newblock Springer-Verlag, Berlin, 1966.

\bibitem{Truesdell65}
C.~A. Truesdell and W.~Noll.
\newblock The {N}on-{L}inear {F}ield {T}heories of {M}echanics.
\newblock In S.~Flügge, editor, {\em Handbuch der {P}hysik}, page volume
  III/3. Springer, Heidelberg, 1965.

\bibitem{xiao97}
H.~Xiao, O.~T. Bruhns, and A.~Meyers.
\newblock Logarithmic strain, logarithmic spin and logarithmic rate.
\newblock {\em Acta Mechanica}, 124(1):89--105, 1997.

\bibitem{xiao1998direct}
H.~Xiao, O.~T. Bruhns, and A.~Meyers.
\newblock {Direct relationship between the Lagrangean logarithmic strain and
  the Lagrangean stretching and the Lagrangean Kirchhoff stress}.
\newblock {\em Mechanics Research Communications}, 25(1):59--67, 1998.

\bibitem{xiao98_2}
H.~Xiao, O.~T. Bruhns, and A.~Meyers.
\newblock {Objective corotational rates and unified work-conjugacy relation
  between Eulerian and Lagrangean strain and stress measures.}
\newblock {\em Archives of Mechanics}, 50(6):1015--1045, 1998.

\bibitem{xiao1998objective}
H.~Xiao, O.~T. Bruhns, and A.~Meyers.
\newblock On objective corotational rates and their defining spin tensors.
\newblock {\em International Journal of Solids and Structures},
  35(30):4001--4014, 1998.

\bibitem{xiao98_1}
H.~Xiao, O.~T. Bruhns, and A.~Meyers.
\newblock Strain rates and material spins.
\newblock {\em Journal of Elasticity}, 52(1):1--41, 1998.

\bibitem{xiao1999natural}
H.~Xiao, O.~T. Bruhns, and A.~Meyers.
\newblock {A natural generalization of hypoelasticity and Eulerian rate type
  formulation of hyperelasticity}.
\newblock {\em Journal of Elasticity}, 56:59--93, 1999.

\bibitem{xiao2006elastoplasticity}
H.~Xiao, O.~T. Bruhns, and A.~Meyers.
\newblock Elastoplasticity beyond small deformations.
\newblock {\em Acta Mechanica}, 182(1):31--111, 2006.

\bibitem{XiaoIJSS1998}
H.~Xiao, O.T. Bruhns, and A.~Meyers.
\newblock On objective corotational rates and their defining spin tensors.
\newblock {\em International Journal of Solids and Structures},
  35(30):4001--4014, 1998.

\bibitem{zaremba1903forme}
S.~Zaremba.
\newblock Sur une forme perfectionnée de la théorie de la relaxation.
\newblock {\em Bulletin International de l'Academie des Sciences de Cracovie},
  pages 534--614, 1903.

\bibitem{Zbib1988}
H.~M. Zbib and E.~C. Aifantis.
\newblock {On the concept of relative and plastic spins and its implications to
  large deformation theories. Part I: Hypoelasticity and vertex-type
  plasticity}.
\newblock {\em Acta Mechanica}, 75(1):15--33, 1988.

\bibitem{zohdi2006}
T.~I. Zohdi.
\newblock Uncertainty growth in hypoelastic material models.
\newblock {\em Mathematics and Mechanics of Solids}, 11(6):555--562, 2006.

\end{thebibliography}
\endgroup
\begin{appendix}
\section{Notation} \label{appendixnotation}
\textbf{The deformation $\varphi(x,t)$, the material time derivative $\frac{\DD}{\DD t}$ and the partial time derivative $\partial_t$} \\
\\
In accordance with \cite{Marsden83} we agree on the following convention regarding an elastic deformation $\varphi$ and time derivatives of material quantities:

Given two sets $\Omega, \Omega_{\xi} \subset \R^3$ we denote by $\varphi: \Omega \to \Omega_{\xi}, x \mapsto \varphi(x) = \xi$ the deformation from the \emph{reference configuration} $\Omega$ to the \emph{current configuration} $\Omega_{\xi}$. A \emph{motion} of $\Omega$ is a time-dependent family of deformations, written $\xi = \varphi(x,t)$. The \emph{velocity} of the point $x \in \Omega$ is defined by $\overline{V}(x,t) = \partial_t \varphi(x,t)$ and describes a vector emanating from the point $\xi = \varphi(x,t)$ (see also Figure \ref{yfig1}). Similarly, the velocity viewed as a function of $\xi \in \Omega_{\xi}$ is denoted by $v(\xi,t)$. 

	\begin{figure}[h!]
		\begin{center}		
		\begin{minipage}[h!]{0.8\linewidth}
			\centering
			\hspace*{-40pt}
			\includegraphics[scale=0.4]{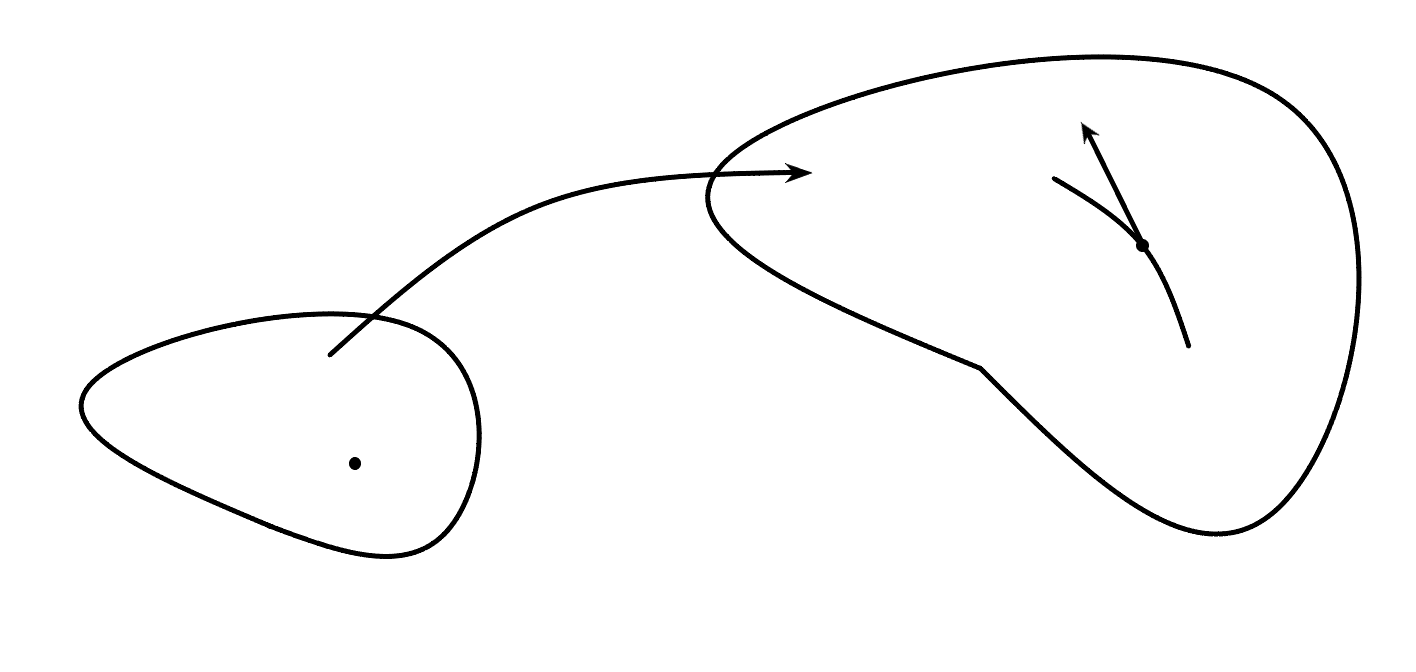}
			\put(-40,30){\footnotesize $\Omega_\xi$}
			\put(-340,25){\footnotesize $\Omega_x$}
			\put(-316,64){\footnotesize $x$}
			\put(-280,148){\footnotesize $\varphi(x,t)$}
			\put(-104,168){\footnotesize $\overline V(x,t) \!=\! v(\xi,t)$}
			\put(-88,119){\footnotesize $\xi$}
			\put(-105,90){\footnotesize curve $t \mapsto \varphi(x,t)$}
			\put(-85,80){\footnotesize  for $x$ fixed}
		\end{minipage} 
		\caption{Illustration of the deformation $\varphi(x,t): \Omega_x \to \Omega_{\xi}$ and the velocity $\overline V(x,t) = v(\xi,t)$.}
		\label{yfig1}
		\end{center}
	\end{figure}

Considering an arbitrary material quantity $Q(x,t)$ on $\Omega$, equivalently represented by $q(\xi,t)$ on $\Omega_\xi$, we obtain by the chain rule for the time derivative of $Q(x,t)$
	\begin{align}
	\frac{\DD}{\DD t}q(\xi,t) \colonequals \frac{\dif}{\dif t}[Q(x,t)] = \DD_\xi q(\xi,t).v(\xi,t) + \partial_t q(\xi,t) \, .
	\end{align}
Since it is always possible to view any material quantity $Q(x,t) = q(\xi,t)$ from two different angles, namely by holding $x$ or $\xi$ fixed, we agree to write
	\begin{itemize}
	\item $\dot q \colonequals \dd \frac{\DD}{\DD t}[q]$ for the material (substantial) derivative of $q$ with respect to $t$ holding $x$ fixed and
	\item $\partial_t q$ for the derivative of $q$ with respect to $t$ holding $\xi$ fixed.
	\end{itemize}
For example, we obtain the velocity gradient $L := \DD_\xi v(\xi,t)$ by
	\begin{align}
	L = \DD_\xi v(\xi,t) = \DD_\xi \overline V(x,t) \overset{\text{def}}&{=} \DD_\xi \frac{\dif}{\dif t} \varphi(x,t) = \DD_{\xi} \partial_t \varphi(\varphi^{-1}(\xi,t),t) = \partial_t \DD_x \varphi(\varphi^{-1}(\xi,t),t) \, \DD_\xi \big(\varphi^{-1}(\xi,t)\big) \notag \\
&=  \partial_t \DD_x \varphi(\varphi^{-1}(\xi,t),t) \, (\DD_x \varphi)^{-1}(\varphi^{-1}(\xi,t),t) = \dot F(x,t) \, F^{-1}(x,t) = L \, ,
	\end{align}
where we used that $\partial_t = \frac{\dif}{\dif t} = \frac{\DD}{\DD t}$ are all the same, if $x$ is fixed. \\
\\
As another example, when determining a corotational rate $\frac{\DD^{\circ}}{\DD t}$ we write
	\begin{align}
	\frac{\DD^{\circ}}{\DD t}[\sigma] = \frac{\DD}{\DD t}[\sigma] + \sigma \, \Omega^{\circ} - \Omega^{\circ} \, \sigma = \dot \sigma + \sigma \, \Omega^{\circ} - \Omega^{\circ} \, \sigma \, .
	\end{align}
However, if we solely work on the current configuration, i.e.~holding $\xi$ fixed, we write $\partial_t v$ for the time-derivative of the velocity (or any quantity in general). \\
\\
\noindent \textbf{Inner product} \\
\\
For $a,b\in\R^n$ we let $\langle {a},{b}\rangle_{\R^n}$  denote the scalar product on $\R^n$ with associated vector norm $\norm{a}_{\R^n}^2=\langle {a},{a}\rangle_{\R^n}$. We denote by $\R^{n\times n}$ the set of real $n\times n$ second order tensors, written with capital letters. The standard Euclidean scalar product on $\R^{n\times n}$ is given by
$\langle {X},{Y}\rangle_{\R^{n\times n}}=\tr{(X Y^T)}$, where the superscript $^T$ is used to denote transposition. Thus the Frobenius tensor norm is $\norm{X}^2=\langle {X},{X}\rangle_{\R^{n\times n}}$, where we usually omit the subscript $\R^{n\times n}$ in writing the Frobenius tensor norm. The identity tensor on $\R^{n\times n}$ will be denoted by $\id$, so that $\tr{(X)}=\langle {X},{\id}\rangle$. \\
\\
\noindent \textbf{Frequently used spaces} 
	\begin{itemize}
	\item $\Sym(n), \rm \Sym^+(n)$ and $\Sym^{++}(n)$ denote the symmetric, positive semi-definite symmetric and positive definite symmetric second order tensors respectively. Note that $\Sym^{++}(n)$ is considered herein only as an algebraic subset of $\Sym(n)$, not endowed with a Riemannian geometry \cite{fiala2009, fiala2016, fiala2020objective, kolev2024objective}.
	\item ${\rm GL}(n):=\{X\in\R^{n\times n}\;|\det{X}\neq 0\}$ denotes the general linear group.
	\item ${\rm GL}^+(n):=\{X\in\R^{n\times n}\;|\det{X}>0\}$ is the group of invertible matrices with positive determinant.
	\item $\mathrm{O}(n):=\{X\in {\rm GL}(n)\;|\;X^TX=\id\}$.
	\item ${\rm SO}(n):=\{X\in {\rm GL}(n,\R)\;|\; X^T X=\id,\;\det{X}=1\}$.
	\item $\mathfrak{so}(3):=\{X\in\mathbb{R}^{3\times3}\;|\;X^T=-X\}$ is the Lie-algebra of skew symmetric tensors.
	\item The set of positive real numbers is denoted by $\R_+:=(0,\infty)$, while $\overline{\R}_+=\R_+\cup \{\infty\}$.
	\end{itemize}
\textbf{Frequently used tensors}
	\begin{itemize}
	\item $F = \DD \varphi(x,t)$ is the Fréchet derivative (Jacobian matrix) of the deformation $\varphi(\cdot,t) : \Omega_x \to \Omega_{\xi} \subset \R^3$. $\varphi(x,t)$ is usually assumed to be a diffeomorphism at every time $t \ge 0$ so that the inverse mapping $\varphi^{-1}(\cdot,t) : \Omega_{\xi} \to \Omega_x$ exists.
	\item $C=F^T \, F$ is the right Cauchy-Green strain tensor.
	\item $B=F\, F^T$ is the left Cauchy-Green (or Finger) strain tensor.
	\item $U = \sqrt{F^T \, F} \in \Sym^{++}(3)$ is the right stretch tensor, i.e.~the unique element of ${\rm Sym}^{++}(3)$ with $U^2=C$.
	\item $V = \sqrt{F \, F^T} \in \Sym^{++}(3)$ is the left stretch tensor, i.e.~the unique element of ${\rm Sym}^{++}(3)$ with $V^2=B$.
	\item $\log V = \frac12 \, \log B$ is the spatial logarithmic strain tensor or Hencky strain.
	\item We write $V = Q$ diag($\lambda_1, \lambda_2, \lambda_3$) $Q^T$, where $\lambda_i \in \R_+$ are the principal stretches.
	\item $L = \dot F \, F^{-1} = \DD_\xi v(\xi)$ is the spatial velocity gradient.
	\item $D = \sym \, L$ is the spatial rate of deformation, the Eulerian strain rate tensor.
	\item $W = \sk \, L$ is the vorticity tensor.
	\item We also have the polar decomposition $F = R \, U = V R \in {\rm GL}^+(3)$ with an orthogonal matrix $R \in \OO(3)$ (cf. Neff et al.~\cite{Neffpolardecomp}), see also \cite{LankeitNeffNakatsukasa,Neff_Nagatsukasa_logpolar13}.
	\end{itemize}
\noindent \textbf{Frequently used rates}
	\begin{multicols}{2}
	\begin{itemize}
	\item $\dd \frac{\DD^{\sharp}}{\DD t}$ denotes an arbitrary objective derivative,
	\item $\dd \frac{\DD^{\circ}}{\DD t} \begin{array}{l} \text{denotes an arbitrary corotational} \\ \text{derivative,} \end{array}$
	\item $\dd \frac{\DD^{\ZJ}}{\DD t} \begin{array}{l} \text{denotes the Zaremba-Jaumann} \\ \text{derivative,} \end{array}$
	\item $\dd \frac{\DD^{\GN}}{\DD t}$ denotes the Green-Naghdi derivative.
	\item $\dd \frac{\DD^{\log}}{\DD t}$ denotes the logarithmic derivative.
	\item $\dd \frac{\DD}{\DD t}$ denotes the material derivative.
	\end{itemize}
	\end{multicols}
\noindent \textbf{Calculus with the material derivative - some examples} \\
\\
Consider the spatial Cauchy stress
	\begin{align}
	\label{eqmat01}
	\sigma(\xi,t) \colonequals \Sigma(B) = \Sigma(F(x,t) \, F^T(x,t)) = \Sigma(F(\varphi^{-1}(\xi,t),t) \, F^T(\varphi^{-1}(\xi,t),t)) \, .
	\end{align}
Then, on the one hand we have for the material derivative
	\begin{align}
	\label{eqmat02}
	\frac{\DD}{\DD t}[\sigma] = \DD_\xi \sigma(\xi,t).v(\xi,t) + \partial_t \sigma(\xi,t) \cdot 1
	\end{align}
and on the other hand equivalently
	\begin{equation}
	\label{eqmat03}
	\begin{alignedat}{2}
	\frac{\DD}{\DD t}[\sigma] &= \frac{\DD}{\DD t}[\Sigma(F(x,t) \, F(x,t)^T)] \overset{(1)}{=} \frac{\dif}{\dif t}[\Sigma(F(x,t) \, F^T(x,t))] \\
	\overset{\substack{\text{standard} \\ \text{chain rule}}}&{=} \DD_B \Sigma(F(x,t) \, F^T(x,t)). \frac{\dif}{\dif t}[(F(x,t) \, F^T(x,t))] = \DD_B \Sigma(F(x,t) \, F^T(x,t)).(\dot F \, F^T + F \, \dot{F}^T) \\
	&= \DD_B \Sigma(F(x,t) \, F^T(x,t)).[\dot F \, F^{-1} \, F \, F^T + F \, F^T \, F^{-T} \, \dot{F}^T] =\DD_B \Sigma(F(x,t) \, F^T(x,t)).[L \, B + B \, L^T]\\
	 &= \DD_B \Sigma(B). [L \, B + B \, L^T]\, .
	\end{alignedat}
	\end{equation}
In $\eqref{eqmat03}_{1}$ we have used the fact that there is already a material representation which allows to set $\frac{\DD}{\DD t} = \frac{\dif}{\dif t}$. Of course, \eqref{eqmat02} is equivalent to \eqref{eqmat03}. From the context it should be clear which representation of $\sigma$ (referential or spatial) we are working with and by abuse of notation we do not distinguish between $\sigma$ and $\Sigma$. \\
\\
The same must be observed when calculating with corotational derivatives
	\begin{align}
	\label{eqmat04}
	\frac{\DD^{\circ}}{\DD t}[\sigma] = \frac{\DD}{\DD t}[\sigma] + \sigma \, \Omega^{\circ} - \Omega^{\circ} \, \sigma, \qquad \Omega^{\circ} = \frac{\DD}{\DD t}Q^{\circ}(x,t) \, (Q^{\circ})^T(x,t) = \frac{\dif}{\dif t} Q^{\circ}(x,t) \, (Q^{\circ})^T(x,t) \, .
	\end{align}
Here, we have
	\begin{align}
	\label{eqmat05}
	\frac{\DD^{\circ}}{\DD t}[\sigma] \overset{(\ast \ast)}&{=} Q^{\circ}(x,t) \, \frac{\DD}{\DD t}[(Q^{\circ})^T(x,t) \, \sigma \, Q^{\circ}(x,t)] \, (Q^{\circ})^T(x,t) \\
	&=Q^{\circ}(x,t) \, \left\{\frac{\DD}{\DD t}(Q^{\circ})^T(x,t) \, \sigma \, Q^{\circ}(x,t) + (Q^{\circ})^T(x,t) \, \frac{\DD}{\DD t}[\sigma] \, Q^{\circ}(x,t) + (Q^{\circ})^T(x,t) \, \sigma \, \frac{\DD}{\DD t}Q^{\circ}(x,t) \right\} \, (Q^{\circ})^T(x,t) \notag \\
	&=Q^{\circ}(x,t) \, \bigg\{\frac{\dif}{\dif t}(Q^{\circ})^T(x,t) \, \sigma \, Q^{\circ}(x,t) + (Q^{\circ})^T(x,t) \, \underbrace{\frac{\DD}{\DD t}[\sigma]}_{(\ast \ast \ast)} \, Q^{\circ}(x,t) + (Q^{\circ})^T(x,t) \, \sigma \, \frac{\dif}{\dif t}Q^{\circ}(x,t) \bigg\} \, (Q^{\circ})^T(x,t) \notag 
	\end{align}
and we can decide for $(\ast \ast \ast)$ to continue the calculus with \eqref{eqmat02} or \eqref{eqmat03}. In either case one has to decide viewing the functions as defined on the reference configuration $\Omega$ or in the spatial configuration $\Omega_\xi$.

In \eqref{eqmat05} we used $Q = Q(x,t)$ and $\Omega = \Omega(x,t)$. This means that the ``Lie-type'' representation $(\ast \, \ast)$ necessitates the definition of a reference configuration, so that we can switch between $\xi = \varphi(x,t)$ and $x$.

The interpretation $(\ast \, \ast)$ is most clearly represented for the Green-Naghdi rate, in which the spin \break $\Omega^{\GN} \colonequals \frac{\dif}{\dif t}R(x,t) \, R^T(x,t) = \dot R(x,t) \, R^T(x,t)$ is defined according to the polar decomposition $F = R \, U$ and in
	\begin{align}
	\frac{\DD^{\GN}}{\DD t}[\sigma] = \frac{\DD}{\DD t}[\sigma] + \sigma \, \Omega^{\GN} - \Omega^{\GN} \, \sigma = R \, \frac{\DD}{\DD t}[R^T \, \sigma \, R] \, R^T
	\end{align}
the term $[R^T \, \sigma \, R]$ is called \emph{corotational stress tensor} (cf.~\cite[p.~142]{Marsden83}). \\
\\
\noindent \textbf{Tensor domains} \\
\\
Denoting the reference configuration by $\Omega_x$ with tangential space $T_x \Omega_x$ and the current/spatial configuration by $\Omega_\xi$ with tangential space $T_\xi \Omega_\xi$ as well as $\varphi(x) = \xi$, we have the following relations (see also Figure \ref{yfig2}):

	\begin{figure}[h!]
		\begin{center}		
		\begin{minipage}[h!]{0.8\linewidth}
			\centering
			\hspace*{-80pt}
			\includegraphics[scale=0.5]{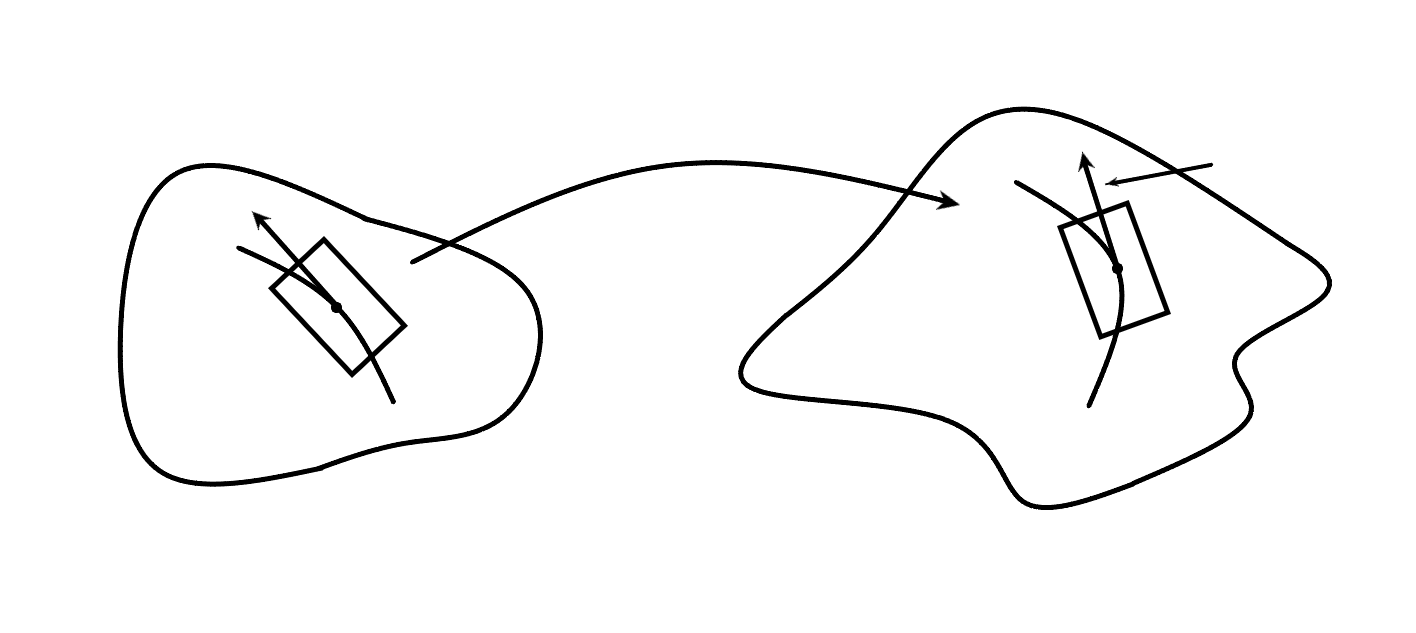}
			\put(-100,45){\footnotesize $\Omega_\xi$}
			\put(-390,55){\footnotesize $\Omega_x$}
			\put(-412,115){\footnotesize $x$}
			\put(-430,155){\footnotesize $\dot \gamma(0)$}
			\put(-377,105){\footnotesize $T_x \Omega_x$}
			\put(-383,85){\footnotesize $\gamma(s)$}
			\put(-280,183){\footnotesize $\varphi(x,t_0)$}
			\put(-120,131){\footnotesize $\xi$}
			\put(-78,181){\footnotesize $\frac{\dif}{\dif s}\varphi(\gamma(s),t_0)\bigg\vert_{s=0}$}
			\put(-88,119){\footnotesize $T_\xi \Omega_\xi$}
			\put(-115,88){\footnotesize $\varphi(\gamma(s),t_0)$}
		\end{minipage} 
		\caption{Illustration of the curve $s \mapsto \varphi(\gamma(s),t_0), \; \gamma(0) = x$ for a fixed time $t = t_0$ with vector field \break $s \mapsto \frac{\dif}{\dif s} \varphi(\gamma(s),t) \in T_\xi \Omega_\xi$.}
		\label{yfig2}
		\end{center}
	\end{figure}
	\begin{multicols}{2}
	\begin{itemize}
	\item $F\colon T_x \Omega_x \to T_\xi \Omega_\xi$
	\item $Q\colon T_x \Omega_x \to T_\xi \Omega \xi$
	\item $F^T\colon T_\xi \Omega_\xi \to T_x \Omega_x$
	\item $Q^T\colon T_\xi \Omega_\xi \to T_x \Omega_x$
	\item $C = F^T \, F\colon T_x \Omega_x \to T_x \Omega_x$
	\item $B = F \, F^T\colon T_\xi \Omega_\xi \to T_\xi \Omega_\xi$
	\item $\sigma\colon T_\xi \Omega_\xi \to T_\xi \Omega_\xi$
	\item $\tau\colon T_\xi \Omega_\xi \to T_\xi \Omega_\xi$
	\item $S_2 \colon T_x \Omega_x \to T_x \Omega_x$
	\item $S_1 \colon T_x \Omega_x \to T_\xi \Omega_\xi$
	\item $Q^T \, \sigma \, Q\colon T_x \Omega_x \to T_x \Omega_x$
	\end{itemize}
	\end{multicols}

\noindent \textbf{Primary matrix functions} \\
\\
We define primary matrix functions as those functions $\Sigma \colon \Sym^{++}(3) \to \Sym(3)$, such that
	\begin{align}
	\Sigma(V) = \Sigma(Q^T \, \text{diag}_V(\lambda_1, \lambda_2, \lambda_3) \, Q) = Q^T \Sigma(\text{diag}_V(\lambda_1, \lambda_2, \lambda_3)) \, Q = Q^T \,
		\begin{pmatrix}
		f(\lambda_1) & 0 & 0 \\
		0 & f(\lambda_2) & 0 \\
		0 & 0 & f(\lambda_3)
		\end{pmatrix} \, Q
	\end{align}
with one given real-valued scale-function $f \colon \mathbb{R}_+ \to \mathbb{R}$. Any primary matrix function is an isotropic matrix function but not vice-versa as shows e.g.~$\Sigma(V) = \det V \, \id$. \\
\\
\textbf{List of additional definitions and useful identities}
	\begin{itemize}
	\item For two metric spaces $X, Y$ and a linear map $L: X \to Y$ with argument $v \in X$ we write $L.v:=L(v)$. This applies to a second order tensor $A$ and a vector $v$ as $A.v$ as well as a fourth order tensor $\C$ and a second order tensor $H$ as $\C.H$.
	\item We define $J = \det{F}$ and denote by $\Cof(X) = (\det X)X^{-T}$ the \emph{cofactor} of a matrix in ${\rm GL}^{+}(3)$.
	\item We define $\sym X = \frac12 \, (X + X^T)$ and $\sk X = \frac12 \, (X - X^T)$ as well as $\dev X = X - \frac13 \, \tr(X) \, \id$.
	\item For all vectors $\xi,\eta\in\R^3$ we have the tensor or dyadic product $(\xi\otimes\eta)_{ij}=\xi_i\,\eta_j$.
	\item $S_1=\DD_F \WW(F) = \sigma \, \Cof F$ is the non-symmetric first Piola-Kirchhoff stress tensor.
	\item $S_2=F^{-1}S_1=2\,\DD_C \widetilde{\WW}(C)$ is the symmetric second  Piola-Kirchhoff stress tensor.
	\item $\sigma=\frac{1}{J}\,  S_1\, F^T=\frac{1}{J}\,  F\,S_2\, F^T=\frac{2}{J}\DD_B \widetilde{\WW}(B)\, B=\frac{1}{J}\DD_V \widetilde{\WW}(V)\, V = \frac{1}{J} \, \DD_{\log V} \widehat \WW(\log V)$ is the symmetric Cauchy stress tensor.
	\item $\sigma = \frac{1}{J} \, F\, S_2 \, F^T = \frac{2}{J} \, F \, \DD_C \widetilde{\WW}(C) \, F^T$ is the ''\emph{Doyle-Ericksen formula}'' \cite{doyle1956}.
	\item For $\sigma: \Sym(3) \to \Sym(3)$ we denote by $\DD_B \sigma(B)$ with $\sigma(B+H) = \sigma(B) + \DD_B \sigma(B).H + o(H)$ the Fréchet-derivative. For $\sigma: \Sym^+(3) \subset \Sym(3) \to \Sym(3)$ the same applies. Similarly, for $\WW : \R^{3 \times 3} \to \R$ we have $\WW(X + H) = \WW(X) + \langle \DD_X \WW(X), H \rangle + o(H)$.
	\item $\tau = J \, \sigma = 2\, \DD_B \widetilde{\WW}(B)\, B $ is the symmetric Kirchhoff stress tensor.
	\item $\tau = \DD_{\log V} \widehat{\WW}(\log V)$ is the ``\emph{Richter-formula}'' \cite{richter1948isotrope, richter1949hauptaufsatze}.
	\item $\sigma_i =\dd\frac{1}{\lambda_1\lambda_2\lambda_3}\dd\lambda_i\frac{\partial g(\lambda_1,\lambda_2,\lambda_3)}{\partial \lambda_i}=\dd\frac{1}{\lambda_j\lambda_k}\dd\frac{\partial g(\lambda_1,\lambda_2,\lambda_3)}{\partial \lambda_i}, \ \ i\neq j\neq k \neq i$ are the principal Cauchy stresses (the eigenvalues of the Cauchy stress tensor $\sigma$), where $g:\mathbb{R}_+^3\to \mathbb{R}$ is the unique function  of the singular values of $U$ (the principal stretches) such that $\WW(F)=\widetilde{\WW}(U)=g(\lambda_1,\lambda_2,\lambda_3)$.
	\item $\sigma_i =\dd\frac{1}{\lambda_1\lambda_2\lambda_3}\frac{\partial \widehat{g}(\log \lambda_1,\log \lambda_2,\log \lambda_3)}{\partial \log \lambda_i}$, where $\widehat{g}:\mathbb{R}^3\to \mathbb{R}$ is the unique function such that \\ \hspace*{0.3cm} $\widehat{g}(\log \lambda_1,\log \lambda_2,\log \lambda_3):=g(\lambda_1,\lambda_2,\lambda_3)$.
	\item $\tau_i =J\, \sigma_i=\dd\lambda_i\frac{\partial g(\lambda_1,\lambda_2,\lambda_3)}{\partial \lambda_i}=\frac{\partial \widehat{g}(\log \lambda_1,\log \lambda_2,\log \lambda_3)}{\partial \log \lambda_i}$ \, . 
	\end{itemize}

\vspace*{2em}
\noindent \textbf{Conventions for fourth-order symmetric operators, minor and major symmetry} \\
\\
For a fourth order linear mapping $\C : \Sym(3) \to \Sym(3)$ we agree on the following convention. \\
\\
We say that $\C$ has \emph{minor symmetry} if
	\begin{align}
	\C.S \in \Sym(3) \qquad \forall \, S \in \Sym(3).
	\end{align}
This can also be written in index notation as $C_{ijkm} = C_{jikm} = C_{ijmk}$. If we consider a more general fourth order tensor $\C : \R^{3 \times 3} \to \R^{3 \times 3}$ then $\C$ can be transformed having minor symmetry by considering the mapping $X \mapsto \sym(\C. \sym X)$ such that $\C: \R^{3 \times 3} \to \R^{3 \times 3}$ is minor symmetric, if and only if $\C.X = \sym(\C.\sym X)$. \\
\\
We say that $\C$ has \emph{major symmetry} (or is \emph{self-adjoint}, respectively) if
	\begin{align}
	\langle \C. S_1, S_2 \rangle = \langle \C. S_2, S_1 \rangle \qquad \forall \, S_1, S_2 \in \Sym(3).
	\end{align}
Major symmetry in index notation is understood as $C_{ijkm} = C_{kmij}$. \\
\\
The set of positive-definite, major symmetric fourth order tensors mapping $\R^{3 \times 3} \to \R^{3 \times 3}$ is denoted as $\Sym^{++}_4(9)$, in case of additional minor symmetry, i.e.~mapping $\Sym(3) \to \Sym(3)$ as $\Sym^{++}_4(6)$. By identifying $\Sym(3) \cong \R^6$, we can view $\C$ as a linear mapping in matrix form $\widetilde \C: \R^6 \to \R^6$. \newline If $H \in \Sym(3) \cong \R^6$ has the entries $H_{ij}$, we can write
	\begin{align}
	\label{eqvec1}
	h = \text{vec}(H) = (H_{11}, H_{22}, H_{33}, H_{12}, H_{23}, H_{31}) \in \R^6 \qquad \text{so that} \qquad \langle \C.H, H \rangle_{\Sym(3)} = \langle \widetilde \C.h, h \rangle_{\R^6}.
	\end{align}
If $\C: \Sym(3) \to \Sym(3)$, we can define $\bfsym \C$ by
	\begin{align}
	\langle \C.H, H \rangle_{\Sym(3)} = \langle \widetilde \C.h, h \rangle_{\R^6} = \langle \sym \widetilde \C. h, h \rangle_{\R^6} =: \langle \bfsym \C.H, H \rangle_{\Sym(3)}, \qquad \forall \, H \in \Sym(3).
	\end{align}
Major symmetry in these terms can be expressed as $\widetilde \C \in \Sym(6)$. \emph{In this text, however, we omit the tilde-operation and ${\bf sym}$ and write in short $\sym\C\in {\rm Sym}_4(6)$ if no confusion can arise.} In the same manner we speak about $\det \C$ meaning $\det \widetilde \C$. \\
\\
A linear mapping $\C : \R^{3 \times 3} \to \R^{3 \times 3}$ is positive definite if and only if
	\begin{align}
	\label{eqposdef1}
	\langle \C.H, H \rangle > 0 \qquad \forall \, H \in \R^{3 \times 3} \qquad \iff \qquad \C \in \Sym^{++}_4(9)
	\end{align}
and analogously it is positive semi-definite if and only if
	\begin{align}
	\label{eqpossemidef1}
	\langle \C.H, H \rangle \ge 0 \qquad \forall \, H \in \R^{3 \times 3} \qquad \iff \qquad \C \in \Sym^+_4(9).
	\end{align}
For $\C: \Sym(3) \to \Sym(3)$, after identifying $\Sym(3) \cong \R^6$, we can reformulate \eqref{eqposdef1} as $\widetilde \C \in \Sym^{++}(6)$ and \eqref{eqpossemidef1} as $\widetilde \C \in \Sym^+(6)$. 

\end{appendix}

\end{document}